\numberwithin{equation}{section}
\theoremstyle{plain}
\newtheorem{theorem}{Theorem}[section]
\newtheorem{lemma}[theorem]{Lemma}
\newtheorem{proposition}[theorem]{Proposition}
\newtheorem{corollary}[theorem]{Corollary}
\newtheorem{definition/proposition}[theorem]{Definition/Proposition}
\newtheorem{theorem/definition}[theorem]{Theorem/Definition}
\newtheorem*{bigtheorem*}{Main Theorem}
\theoremstyle{definition}
\newtheorem{definition}[theorem]{Definition}
\newtheorem{example}[theorem]{Example}
\newtheorem{example/definition}[theorem]{Example/Definition}
\newtheorem{notation}[theorem]{Notation}
\newtheorem{conjecture}[theorem]{Conjecture}
\newtheorem*{conjecture*}{Conjecture}
\theoremstyle{remark}
\newtheorem{remark}[theorem]{Remark}
\newcommand{\RR}{\mathbb{R}}
\newcommand{\cA}{\mathcal{A}}
\newcommand{\cB}{\mathcal{B}}
\newcommand{\cF}{\mathcal{F}}
\newcommand{\cG}{\mathcal{G}}
\newcommand{\cP}{\mathcal{P}}
\newcommand{\cS}{\mathcal{S}}
\newcommand{\cW}{\mathcal{W}}
\DeclareMathOperator{\diag}{Diag}
\DeclareMathOperator{\im}{Image}
\newcommand{\id}{\mathsf{Id}}
\newcommand{\std}{\mathsf{std}}
\renewcommand{\emptyset}{\varnothing}
\renewcommand{\hat}{\widehat}
\renewcommand{\tilde}{\widetilde}
\newcommand{\differential}{\partial}
\newcommand{\boundary}{\partial}
\newcommand{\field}{\mathbb{K}}
\newcommand{\Left}{\mathsf{L}}
\newcommand{\Right}{\mathsf{R}}
\newcommand{\modulispace}{\mathcal{M}}
\newcommand{\modulistack}{\mathfrak{M}}
\newcommand{\homotopic}{\sim}
\newcommand{\isomorphic}{\cong}
\newcommand{\join}{\bm{\mathrm{*}} }
\newcommand{\aug}{\mathsf{Aug}}
\newcommand{\categoryfont}{\mathscr}
\newcommand{\Sh}{\categoryfont{S}h}
\newcommand{\sol}{\mathcal{S}ol}
\newcommand{\Del}{\textbf{Del}}
\newcommand{\mon}{\mathrm{mon}}
\newcommand{\mumon}{\mu\mathrm{mon}}
\newcommand{\Aff}{\categoryfont{A}ff}
\newcommand{\CAlg}{\mathrm{CAlg}}
\newcommand{\Spec}{\mathrm{Spec}}
\newcommand{\Gpd}{\categoryfont{G}pd}
\newcommand{\Dol}{\mathrm{Dol}}
\newcommand{\NAH}{\mathrm{NAH}}
\newcommand{\Mloc}{\mathcal{L}o}
\newcommand{\tot}{\mathrm{tot}}
\newcommand{\pr}{\mathrm{pr}}
\newcommand{\Br}{\mathrm{Br}}
\newcommand{\mB}{\mathrm{B}}
\newcommand{\mK}{\mathrm{K}}
\newcommand{\mH}{\mathrm{H}}
\newcommand{\ms}{\mathrm{s}}
\newcommand{\permutation}{\mathrm{s}}
\newcommand{\overallpermutation}{\mathbf{o}}
\newcommand{\FBD}{\mathrm{FBD}}
\newcommand{\bmdp}{\mathbf{bmdp}}
\newcommand{\BD}{\mathrm{BD}}
\newcommand{\bmd}{\mathbf{bmd}}
\newcommand{\weakequivalent}{\overset{\text{\tiny$\mathfrak{w}$}}{\sim}}
\newcommand{\braidequivalent}{\overset{\text{\tiny$\mathfrak{b}$}}{\sim}}
\newcommand{\FBr}{\mathrm{FBr}}
\newcommand{\setword}[2]{%
  \phantomsection
  #1\def\@currentlabel{\unexpanded{#1}}\label{#2}%
}
\newcommand{\bfK}{\mathbf{K}}
\newcommand{\bfH}{\mathbf{H}}
\newcommand{\bfs}{\mathbf{s}}
\newcommand{\fr}{\mathfrak{r}}
\newcommand{\pt}{\mathrm{pt}}
\newcommand{\bfB}{\mathbf{B}}
\newcommand{\bfe}{\mathbf{e}}
\newcommand{\fc}{\mathfrak{c}}
\newcommand{\pa}{\mathrm{pa}}
\newcommand{\St}{\mathrm{St}}
\title{Dual boundary complexes of Betti moduli spaces over the two-sphere with one irregular singularity}
\author[T. Su]{Tao Su}
\email{sutao08@gmail.com}
\address{N03, Ningzhai, Yau Mathematical Sciences Center, Tsinghua University, Beijing 100084, China}
\begin{document}

\begin{abstract}
The weak geometric P=W conjecture of L. Katzarkov, A. Noll, P. Pandit, and C. Simpson states that, 
a smooth Betti moduli space of complex dimension $d$ over a punctured Riemann surface has the dual boundary complex 
homotopy equivalent to a sphere of dimension $d-1$.
Via a microlocal geometric perspective, we verify this conjecture for a class of rank $n$ wild character varieties 
over the two-sphere with one puncture, associated with any Stokes Legendrian link defined by an $n$-strand positive braid.
\end{abstract}

\maketitle

\tableofcontents

\section*{Introduction}\label{sec:intro}

Let $C$ be a genus $g$ closed Riemann surface with $k$ punctures $\sigma=\{p_1,\text{\tiny$\cdots$},p_k\}$, 
and $G=GL_n(\mathbb{C})$. 
Modulo extra input, the tame nonabelian Hodge correspondence over noncompact curves \cite{Sim90,Kon93} 
induces a diffeomorphism
\[
\NAH:\modulispace_{\Dol}\simeq \modulispace_B
\]
between two moduli spaces: 
the Dolbeault moduli space $\modulispace_{\Dol}$ of stable filtered regular (parabolic) $G$-Higgs bundles 
on $(\Sigma,\sigma)$ with parabolic degree $0$; 
and the Betti moduli space $\modulispace_B$ of stable filtered $G$-local systems 
on $\Sigma\setminus\sigma$ with parabolic degree $0$.
There are also various generalizations: wild NAH over curves \cite{BB04}; Replace $GL_n(\mathbb{C})$ by a general linear 
reductive algebraic group \cite{HKSZ22,HS22}; Replace $C$ with a higher dimensional variety \cite{Cor88,Sim92,Sim95,Moc11,Moc21}.
As a consequence, we obtain an isomorphism on cohomology (with rational coefficients)
\[
\NAH^*:H^*(\modulispace_{B})\xrightarrow[]{\sim}H^*(\modulispace_{\Dol}).
\]
The famous (cohomological) P=W conjecture of Cataldo, Hausel, and Migliorini then states:
\begin{conjecture}[{\cite{dCHM12}, Cohomological P=W}]
\[
\NAH^*(W_{2k}H^*(\modulispace_B)=W_{2k+1}H^*(\modulispace_B))=P_kH^*(\modulispace_{\Dol})
\]
where $P_{\bullet}$ is the Perverse-Leray filtration on $H^*(\modulispace_{\Dol})$ associated to 
the Hitchin map $h:\modulispace_{\Dol}\rightarrow\mathbb{A}$.
\end{conjecture}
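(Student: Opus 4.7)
The plan is to reduce the equality of filtrations to a combination of numerical matching and structural compatibility of canonical classes. First, I would match Poincaré-type polynomials on both sides: on the Betti side, compute the mixed Hodge polynomial via the arithmetic approach, using the fact that character varieties are defined over $\ZZ$ so that $E$-polynomials are read off from finite-field point counts (following Hausel–Rodriguez-Villegas); on the Dolbeault side, compute the perverse Hodge polynomial by applying the decomposition theorem to the Hitchin map $h : \modulispace_{Dol} \to \mathbb{A}$, with supports pinned down by Ng\^o-type support theorems and with the generic summand expressed through the cohomology of spectral curves and their compactified Jacobians.

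Second, to promote the numerical equality to an equality of filtrations, I would identify canonical generators of $H^*$ on both sides coming from tautological (Chern) classes of universal bundles, and directly compute their weights on the Betti side and their perverse degrees on the Dolbeault side. A parallel line of attack is to exploit symmetries: the relative hard Lefschetz for the Hitchin fibration produces an $\mathfrak{sl}_2$-action compatible with $P_\bullet$, and a matching ``curious hard Lefschetz'' on the Betti cohomology would control $W_\bullet$ up to the standard shift by $2$. Combining the multiplicative structure with these Lefschetz-type operators should reduce the filtration equality to a finite-dimensional check on a set of generators.

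The main obstacle is the transcendental nature of the non-abelian Hodge isomorphism $\phi$: it is real analytic but not algebraic, so perverse sheaves and algebraic cycles cannot be transported across it. Both filtrations must therefore be derived from a common structural framework rather than compared directly. Representation-theoretic actions (cohomological Hall algebras, affine Hecke or Weyl-type actions on cohomology) give one such framework; in the spirit of the present paper, a microlocal/contact-geometric model, in which the irregular Stokes data is encoded by a Legendrian knot, offers another combinatorial source common to both sides. Specializing to the rank-$n$ genus-zero case with one irregular puncture considered here, one hopes that the Legendrian braid data is rigid enough for both $W_\bullet$ and $P_\bullet$ to be computed in explicitly combinatorial terms and compared degree by degree.
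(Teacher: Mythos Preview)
The statement you have attempted to prove is not a theorem of the paper but the \emph{Cohomological P=W conjecture}, quoted in the introduction purely as background and motivation. The paper does not prove it, does not claim to prove it, and explicitly notes that ``the general case is still open''; the paper's actual contribution concerns the distinct (though related) homotopy type conjecture on dual boundary complexes. There is therefore no proof in the paper against which to compare your proposal.

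As for the proposal itself: what you have written is not a proof but a survey of heuristics surrounding P=W (point counting for $E$-polynomials, Ng\^o support, curious hard Lefschetz, tautological generators, Hall-algebra actions). Each ingredient you mention is genuinely relevant to the literature on the conjecture, but you have not carried out any step, and several of the ``reductions'' you invoke are themselves the hard content of the problem. In particular, promoting a numerical equality of mixed Hodge and perverse polynomials to an equality of filtrations via tautological classes and Lefschetz symmetries is precisely the obstacle, not a routine bookkeeping step; and the suggestion that the Legendrian braid model of the present paper makes both filtrations ``explicitly combinatorial'' is speculative and not supported by anything in the paper. If your intent was to outline a research program rather than give a proof, you should say so; as a proof of the stated conjecture, the proposal has a genuine gap at every step.
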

This has now been given three very different proofs in the major case of twisted $GL_n(\mathbb{C})$-character varieties
\cite{MS22,HMMS22,MSY23}.
For earlier works or various extensions, see also \cite{dCHM12,dCMS22,dCM20,Dav23,Mau21,FM22,MMS22}.

Aiming at a geometric interpretation of the cohomological P=W conjecture, 
L. Katzarkov, A. Noll, P. Pandit, and C. Simpson \cite[Conj.1.1]{KNPS15}, \cite[Conj.11.1]{Sim16} formulated a 
geometric analogue, called the geometric P=W conjecture. 
Via $\NAH$, it relates the `asymptotic behavior of the Betti moduli space at infinity' 
to the `Hitchin map at infinity' on the Dolbeault side. 
More concretely, take a log compactification $\overline{\modulispace}_B$ of the (smooth) Betti moduli space $\modulispace_B$ 
with simple normal crossing boundary divisor $\boundary\modulispace_B:=\overline{\modulispace}_B\setminus\modulispace_B$. 
The combinatorics of the intersections of the irreducible components of $\boundary\modulispace_B$ 
is encoded by a simplicial complex $\mathbb{D}\boundary\modulispace_B$, called the dual boundary complex. 
The homotopy type of the dual boundary complex is an invariant of the algebraic variety $\modulispace_B$.

Now, a weak form of the geometric P=W conjecture states that

\begin{conjecture}[{\cite{KNPS15,Sim16}}, weak geometric P=W]\label{conj:homotopy type conj}
The dual boundary complex $\mathbb{D}\boundary\modulispace_B$ is homotopy equivalent to the sphere $S^{d-1}$, 
where $d=\mathrm{dim}_{\mathbb{C}}\modulispace_B$.
\end{conjecture}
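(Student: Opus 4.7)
The plan is to prove the conjecture in the specific setting of this paper, where $\modulispace_B$ is the rank $n$ wild character variety on $S^2$ with one irregular puncture whose Stokes data is encoded by a positive $n$-braid $\beta$. The idea is to pass, via the microlocal/contact-geometric dictionary, from $\modulispace_B$ to the augmentation variety of the associated Stokes Legendrian knot $\Lambda_\beta$ (equivalently, a moduli space of microlocal rank-one sheaves on $\Lambda_\beta$, or a double Bott--Samelson cell), and then exploit the positive-braid combinatorics to write down an explicit log compactification whose dual boundary complex can be computed by hand.

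First, I would set up the identification $\modulispace_B \simeq \aug(\Lambda_\beta)$ (possibly after restriction to an open stratum) using the known dictionary between wild character varieties on the disk and augmentation varieties of positive-braid Legendrians. For a positive braid, $\aug(\Lambda_\beta)$ carries a canonical open torus chart $(\CC^*)^d$ coming from an admissible ruling/Lagrangian filling of $\Lambda_\beta$, and the complement inside $\aug(\Lambda_\beta)$ is a union of subvarieties naturally indexed by Floer-theoretic ``degenerations'' of the braid word.

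Second, starting from a projective toric compactification of this open chart, I would build a log compactification $\overline{\modulispace}_B$ by a sequence of blow-ups and blow-downs dictated by the letters $\sigma_i$ of $\beta$, until the complement $\boundary\modulispace_B$ is a simple normal crossing divisor. The irreducible components of $\boundary\modulispace_B$ should correspond, in the contact picture, to holomorphic disk bubbling at individual crossings of $\beta$ together with the toric boundary of the chart; the combinatorics of their incidences then gives an explicit simplicial complex built from the word $\beta$. With this model in hand, I would prove $\mathbb{D}\boundary\modulispace_B \simeq S^{d-1}$ by induction on the braid length $\ell(\beta)$: appending one generator $\sigma_i$ to $\beta$ should correspond, at the level of dual complexes, to forming a join with $S^0$, because the extra crossing contributes one new Floer-theoretic disk and hence two new boundary divisors that are mutually disjoint while interacting in a controlled way with the old strata. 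Since $S^k \ast S^0 \simeq S^{k+1}$, the sphere homotopy type propagates along the induction, and independence of the auxiliary choices (ruling, blow-up order) follows from weak factorization together with the known birational invariance of dual boundary complexes of smooth varieties.

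The hard part will be the second step, constructing an SNC log compactification whose boundary stratification is governed cleanly by $\beta$. Cluster-like affine varieties generally admit no canonical projective SNC model, and naive toric compactifications typically produce non-SNC strata at the deepest boundary faces, so controlling the sequence of blow-ups in such a way that the combinatorics stays pinned to the braid word is delicate. A secondary technical point is making the wild Stokes data at the irregular puncture interact correctly with the contact-geometric model, so that the positive braid really governs the boundary combinatorics of $\modulispace_B$ itself and not of some birationally equivalent but genuinely different moduli space. Once these geometric ingredients are in place, the combinatorial sphere identification should reduce to the inductive join argument sketched above.
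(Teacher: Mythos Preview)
Your proposal takes a genuinely different and considerably harder route than the paper. You aim to construct an explicit SNC log compactification of $\modulispace_B$ by toric blow-ups guided by the braid word, and then to run an induction on $\ell(\beta)$ in which each new crossing contributes a join with $S^0$ to the dual boundary complex. By contrast, the paper never builds any compactification at all. Its argument rests on two ingredients: (i) the \emph{ruling decomposition} of $\modulispace_B$ into finitely many locally closed pieces $\modulispace_B^{\rho}\cong(\mathbb{C}^*)^{s(\rho)-n+1}\times\mathbb{C}^{r(\rho)}$, one for each normal ruling $\rho$ of $\beta^{>}$, with a unique open dense piece $(\mathbb{C}^*)^d$ corresponding to the all-switches ruling $\rho_m$ (the only $\rho$ with $r(\rho)=0$); and (ii) Simpson's removal lemma, which says that excising from a smooth irreducible variety a finite union of locally closed pieces of the form $\mathbb{A}^1\times Y$ (with a suitable closure ordering) does not change the homotopy type of the dual boundary complex. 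Since every non-maximal stratum has $r(\rho)\geq 1$ and hence carries an $\mathbb{A}^1$-factor, one strips them all away and is left with $\mathbb{D}\partial\modulispace_B\simeq\mathbb{D}\partial(\mathbb{C}^*)^d\simeq S^{d-1}$.

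What your approach would buy, if it worked, is an explicit geometric model for $\partial\modulispace_B$; what it costs is exactly the difficulty you flag yourself, namely producing an SNC compactification whose boundary combinatorics is cleanly controlled by $\beta$. The paper's route bypasses this entirely: the dual boundary complex is determined only up to homotopy, and Simpson's lemma lets you trade the hard compactification problem for the much softer problem of finding a stratification whose lower strata all have $\mathbb{A}^1$-factors. Your inductive join claim (``appending $\sigma_i$ corresponds to joining with $S^0$'') is plausible at the level of dimension counting but is not obviously true at the level of actual boundary divisors, and would itself require exactly the kind of explicit SNC control you identify as the hard step; the paper's cell decomposition makes this induction unnecessary.
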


More generally, the full geometric P=W conjecture identifies two fibrations up to homotopy: 
On the Dolbeault side, the `Hitchin fibration at infinity'
\[
\overline{h}:N_{\Dol}^*=\modulispace_{\Dol}\setminus h^{-1}(B_R(0))\xrightarrow[]{h} \mathbb{A}\setminus B_R(0)\rightarrow 
(\mathbb{A}\setminus B_R(0))/\text{\tiny scaling}~= S^{d-1}, R\gg 0;
\] 
On the Betti side, a fibration (up to homotopy) of the form $\alpha: N_B^*\rightarrow \mathbb{D}\partial\modulispace_B$, 
where $N_B^*$ is the punctured tubular neighborhood of the boundary divisor $\partial\modulispace_B$ in $\overline{\modulispace}_B$. 
In other words, 
\begin{conjecture}[{\cite{KNPS15,Sim16}}, Geometric P=W] There exists a homotopy commutative square
\[
\begin{tikzcd}[row sep=1pc,column sep=2pc]
N_{\Dol}^*\arrow{r}{\simeq}[swap]{\phi}\arrow{d}{\overline{h}} & N_B^*\arrow{d}{\alpha}\\
S^{d-1}\arrow{r}{\simeq} & \mathbb{D}\boundary\modulispace_B
\end{tikzcd}
\]
where the bottom arrow corresponds to the weak geometric P=W conjecture.
\end{conjecture}
By \cite[Thm.6.2.6]{MMS22}, the geometric P=W conjecture implies the cohomological P=W conjecture in top weights, 
hence the terminology. Otherwise, there's no implication in either direction. To interpret the latter in all degrees,
it's expected that a refined version of the geometric P=W conjecture is required.

To the best of the author's knowledge, the full geometric P=W conjecture has been established only for a few cases 
when $\NAH$ admits a somewhat concrete description: the Painlev\'{e} cases ((g,n)=(0,2), wild $\modulispace_B$) \cite{NS22,Sza19,Sza21};
$(g,k)=(1,0)$ (singular $\modulispace_B$) or $(k,n)=(0,1)$ \cite[Thm.B]{MMS22}.
From now on, the main concern will be the weak geometric P=W conjecture. 
Apart from the two lists above, previous work of other authors has also established the case
$G=SL_2(\mathbb{C})$ \cite{Kom15,Sim16,FF23}. However, all these proofs use some special features of the moduli spaces under
investigation, which rarely apply to the general case. 
In this article, the main goal is to establish the weak geometric P=W conjecture for 
a class of rank $n$ wild character varieties over $(\mathbb{P}^1,\infty)$. Moreover, following the same strategy of the proof
(cell decomposition), the author has been able to prove the conjecture for all very generic character varieties
in a subsequent paper \cite{Su23}.

Let's add a complementary remark. A \textbf{folklore conjecture} predicts that, all smooth Betti moduli spaces $\modulispace_B$ 
are log Calabi-Yau: $\exists$ a log compactification $\overline{\modulispace}_B$ such that 
$(\overline{X}=\overline{\modulispace}_B, D=\partial\modulispace_B)$ is a log Calabi-Yau pair, 
i.e. $K_{\overline{X}}+D$ is trivial. 
Then, the weak geometric P=W conjecture may be regarded as a special case of the more general conjecture:

\begin{conjecture}[M.Kontsevich]
The dual boundary complex of a log Calabi-Yau variety is a sphere.
\end{conjecture}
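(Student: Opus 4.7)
Kontsevich's conjecture as stated is a major open problem, so the following is a strategy rather than a realistic complete plan. The most natural starting point is a reduction via the minimal model program for log Calabi-Yau pairs: the homotopy type of $\mathbb{D}D$ is a crepant-birational invariant of dlt log Calabi-Yau pairs (following work of de Fernex, Koll\'{a}r, and Xu), so one is free to replace $(\overline{X},D)$ with any dlt minimal model having the same boundary combinatorics. The hope is then to contract $(\overline{X},D)$ to a sufficiently simple, e.g.\ toric or cluster, log Calabi-Yau pair whose dual complex is manifestly a sphere, such as the boundary simplex of the fan of a toric log Calabi-Yau variety.

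A complementary strategy proceeds through mirror symmetry. For a log Calabi-Yau $U=\overline{X}\setminus D$, the Gross-Hacking-Keel construction of the mirror family is governed by the integral tropicalization $U^{\mathrm{trop}}$, which is the cone over $\mathbb{D}D$ equipped with a piecewise-linear structure. Whenever $U$ admits a (possibly broken) special Lagrangian torus fibration, the base is an integral affine $(d-1)$-manifold, and under the positivity/cluster hypotheses expected in the setting of non-abelian Hodge theory this base should be homotopically $S^{d-1}$. Implementing this rigorously requires actually constructing the predicted Lagrangian fibration, which is typically the hard part.

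In the specific context of this paper the concrete attack is visible in the setup: one would identify $\modulispace_B$ with a moduli space of augmentations of the Chekanov-Eliashberg DGA of the Stokes Legendrian knot associated to an $n$-strand positive braid, construct an explicit log compactification by degenerating the braid word combinatorially (or by a cluster $\mathcal{A}$-chart decomposition), and then compute $\mathbb{D}\boundary\modulispace_B$ as the nerve of this cover. The main obstacle will be to verify that the constructed boundary is genuinely simple normal crossings with the predicted stratification, and that the resulting simplicial complex is homotopy equivalent to $S^{d-1}$; in practice this is established by explicit elementary collapses or an induction on the braid length. Absent such strong structural input, the general Kontsevich conjecture remains out of reach.
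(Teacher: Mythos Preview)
The statement in question is a \emph{conjecture} attributed to Kontsevich, not a theorem, and the paper does not attempt to prove it. It is recorded in the introduction purely as context: the homotopy type conjecture (Conjecture~\ref{conj:homotopy type conj}) for Betti moduli spaces is presented as a special instance of this broader expectation, motivated by the anticipated log Calabi-Yau structure on $\modulispace_B$. So there is no ``paper's own proof'' to compare against.

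You correctly recognize that the general statement is open and offer a survey of possible strategies (MMP reductions via de~Fernex--Koll\'ar--Xu, mirror symmetry and tropicalization, augmentation/cluster decompositions). This is appropriate as commentary but is not a proof, and you acknowledge as much. Your third paragraph sketches roughly what the paper actually does for its special case---not for the Kontsevich conjecture itself, but for the homotopy type conjecture for $\modulispace_B(\mathbb{C})$ associated to a positive braid closure. The paper's method there is more concrete than a cluster-chart or log-compactification argument: it uses the ruling decomposition of the augmentation variety (Proposition~\ref{thm:cell_decomposition}) to exhibit a stratification by pieces $(\mathbb{C}^*)^a\times\mathbb{C}^b$ with a unique open dense torus, then invokes a removal lemma (Lemma~\ref{lem:dual_boundary_complex_inductive_remove}) to reduce $\mathbb{D}\partial\modulispace_B$ to $\mathbb{D}\partial(\mathbb{C}^*)^d\simeq S^{d-1}$.
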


The only known partial result is due to J. Koll\'{a}r and C. Xu \cite{KX16}: 
If $X$ is log Calabi-Yau of dimension $\leq 5$, then $\mathbb{D}\partial X$ is a finite quotient of a sphere.

\subsection*{Results}
\addtocontents{toc}{\protect\setcounter{tocdepth}{1}}

Throughout the context, we fix a base field $\field$ (for example, $\field=\mathbb{C}$).
Let $(C,\sigma=\{p_i\}_{i=1}^k,\{\Lambda_i\}_{i=1}^k)$ be a genus $g$ closed Riemann surface with $k$ punctures 
$\sigma=\{p_1,\text{\tiny$\cdots$},p_k\}$, such that $\Lambda_i$ is a Legendrian link in the co-sphere bundle $T^{\infty}(C\setminus\sigma)$ whose front projection encircles $p_i$.
Each $\Lambda_i$ is identified with the cylindrical closure $\beta_i^{\circ}$
of some positive braid $\beta_i\in\Br_{n_i}^+$.
Let $\vec{r}=(\vec{r}_i)_{i=1}^k:\pi_0(\Lambda)=\text{\tiny$\sqcup_{i=1}^k$}\pi_0(\Lambda_i)\rightarrow \mathbb{N}$ be a suitable map.
As in \cite{STZ17,STWZ19}, we may consider the moduli space 
$\modulispace_{\vec{r}}(C,\sigma=\{p_i\};\{\Lambda_i\})$ of (microlocal rank $\vec{r}$) constructible sheaves on $C$ whose
micro-support at infinity are contained in $\Lambda=\text{\tiny$\sqcup_{i=1}^k$}\Lambda_i$ and whose stalks at $\sigma$ are acyclic.
Taking the microlocal stalks gives rise to the microlocal monodromy:
\[
\mumon: \modulistack_{\vec{r}}(C,\{p_i\},\{\Lambda_i\})\rightarrow \text{\tiny$\prod_{i=1}^k$}\Mloc_{\vec{r}_i}(\Lambda_i),
\]
where $\Mloc_{\vec{r}_i}$ is the moduli stack of rank $\vec{r}_i$ local systems in degree $0$ on $\Lambda_i$. 
The Betti moduli stack $\modulistack_L(C,\{p_i\},\{\Lambda_i\})$ is the substack of $\modulistack_{\vec{r}}(C,\{p_i\},\{\Lambda_i\})$ consisting of objects whose microlocal monodromy is quasi-isomorphic 
to a fixed local system $L\in \text{\tiny$\prod_{i=1}^k$}\Mloc_{\vec{r}_i}(\Lambda_i)$. 
The associated good moduli space \cite{Alp13} $\modulispace_L(C,\{p_i\},\{\Lambda_i\})$ is called a \emph{Betti moduli space} in this article.
The terminology is justified by the microlocal reformulation of irregular Riemann-Hilbert correspondence over curves 
(see \cite[Sec.3.3]{STWZ19} or Appendix \ref{sec:irregular_RH}), according to which 
a wild character variety over a complex curve $C$ can be identified with 
a Betti moduli space $\modulispace_L(C,\{p_i\},\{\Lambda_i\})$ where each $\Lambda_i$ is a `Stokes Legendrian link' encircling $p_i$.

Our main theme is the weak geometric P=W conjecture for $\modulispace_B=\modulispace_L(\mathbb{C}P^1,\{\infty\},\{(\Delta\beta\Delta)^{\circ}\})$, 
where $\beta_1=\Delta\beta\Delta\in\Br_n^+$ with $\Delta$ a half-twist, and we take $\vec{r}=1$.
Then, our main theorem is the following:

\begin{bigtheorem*}[Theorem \ref{thm:main}]\label{thm:main_raw}
If the twisted conjugacy class defining $[L]$ is (twisted) semisimple and generic (Definition \ref{def:generic_condition}), 
then the Betti moduli space $\modulispace_B$ (if nonempty) is a smooth affine and connected variety, 
and the weak geometric P=W conjecture holds for $\modulispace_B$, i.e. we have a homotopy equivalence
$\mathbb{D}\partial\modulispace_B \sim S^{\dim\modulispace_B -1}$.
\end{bigtheorem*}
In the initial version of this article, the main theorem was specifically formulated for connected braid closures $(\Delta\beta\Delta)^{\circ}$. Subsequently, this restriction has been lifted, resulting in a more general formulation. Notably, this expansion encompasses cases such as the (unramified/untwisted) wild character varieties over $(\mathbb{C}P^1,\infty)$ discussed in \cite{Boa07}, which were excluded in the initial version.
The author expresses gratitude to the anonymous referee for suggesting the consideration of such a more general situation.

The strategy for proving the main result employs a \emph{remove/reduction lemma} due to C. Simpson \cite[Lem.2.3]{Sim16}.
This lemma enables the elimination of an irreducible closed variety in the form of $\mathbb{A}^1\times Y$ from a smooth variety $X$ without altering the homotopy type of the dual boundary complex of $X$.
Subsequently, the main theorem follows by induction from the cell decomposition of $\modulispace_B$:

\begin{proposition}[Proposition \ref{prop:cell_decomposition}. Cell decomposition]
$\modulispace_B$ admits a cell decomposition into locally closed subvarieties
\[
\modulispace_B(\field)=\sqcup_{p\in\cW^*(\beta_1)} \modulispace_B^p(\field)
\]
such that 
\begin{enumerate}
\item
For each $p\in\cW^*(\beta_1)$, we have a natural isomorphism
\[
\modulispace_B^p(\field)\isomorphic (\field^*)^{a(p)}\times\field^{b(p)},
\]
such that $a(p)+2b(p)=d=\dim\modulispace_B$.

\item
$\exists!p_m\in\cW^*(\beta_1)$ such that $b(p_m)=0$. In addition, $\modulispace_B^{p_m}(\field)$ is open and dense in $\modulispace_B(\field)$, and
\[
\modulispace_B^{p_m}(\field)\isomorphic (\field^*)^{a(p_m)}=(\field^*)^d.
\]
\end{enumerate}
\end{proposition}

The cell decomposition relies a more concrete description of $\modulispace_B$. 
More generally, we have concrete descriptions of the Betti moduli spaces $\modulispace_L(C,\{p_i\},\{\Lambda_i\})$ similar to character varieties \cite{HLRV11,Boa07,Boa14,BY15}:

\begin{theorem}[Theorem \ref{thm:Betti_moduli_stack_via_braids}, Corollary \ref{cor:restricted_Betti_moduli_stack_via_braids}, \ref{cor:restricted_Betti_moduli_space_via_braids}]\label{thm:description_of_Betti_moduli_spaces}
Under mild assumptions, We have concrete descriptions of $\mumon:\modulistack_{\vec{r}}(C,\{p_i\},\{(\beta_i)^{\circ}\})\rightarrow \text{\tiny$\prod_{i=1}^k$}\Mloc_{\vec{r}_i}(\Lambda_i)$, 
$\modulistack_L(C,\{p_i\},\{(\beta_i)^{\circ}\})$ and $\modulispace_L(C,\{p_i\},\{(\beta_i)^{\circ}\})$. In particular,
\[
\modulistack_1(\mathbb{P}^1,\{\infty\},\{(\Delta\beta\Delta)^{\circ}\}) \simeq [X(\Delta\beta;w_0)/T],
\]
where $X(\gamma,w)$ is the \emph{modified braid variety} (see (\ref{eqn:modified_braid_variety})) associated to $\gamma\in\Br_n^+, w\in W=S_n$, $w_0\in W$ is the longest element, 
and $T\subset G=GL(n)$ is the diagonal maximal torus.
\end{theorem}

Now, the cell decomposition of $\modulispace_B$ follows from that of braid varieties. The latter goes back to \cite{Mel19}. 
We remark that in the first version of this article, instead of braid varieties, 
the proof of our main result 
uses augmentation varieties $\aug(\beta^{>},*_1,\text{\tiny$\cdots$},*_n)$ \cite{HR15}, and is somewhat more complicated.
However, as `augmentations are sheaves' \cite{NRSSZ20}, we have natural identifications
\[
[\aug(\beta^{>},*_1,\text{\tiny$\cdots$},*_n)/T] \simeq \modulistack_1(\mathbb{P}^1,\{\infty\},\{(\Delta\beta\Delta)^{\circ}\})\simeq [X(\Delta\beta;w_0)/T].
\]
Under the identification, the two proofs are essentially equivalent, and the cell decomposition of the braid variety $X(\Delta\beta;w_0)$ 
becomes the Henry-Rutherford decomposition \cite{HR15} of the augmentation variety $\aug(\beta^{>},*_1,\text{\tiny$\cdots$},*_n)$.
See Remark \ref{rem:connection_with_augmentations} for more explanations.

Finally, combining Theorem \ref{thm:description_of_Betti_moduli_spaces} and \cite{Su23}, it's possible to generalize the strategy in this article to treat the weak geometric P=W conjecture 
for all `very generic' wild character varieties over arbitrary punctured Riemann surfaces. This will be done in a future paper.

\subsection*{Organization}
\addtocontents{toc}{\protect\setcounter{tocdepth}{1}}

In Section \ref{sec:constructible_sheaves}, we review the background on constructible sheaves and Legendrian knots. 
The upshot is to introduce various moduli stacks/spaces of constructible sheaves: 
the moduli stacks $\modulistack_{\vec{r}}=\modulistack_{\vec{r}}(C,\{p_i\},\{\Lambda_i\})$, the microlocal monodromy 
$\mumon: \modulistack_{\vec{r}}\rightarrow \text{\tiny$\prod_{i=1}^k$}\Mloc_{\vec{r}_i}(\Lambda_i)$, the restricted versions $\modulistack_{L}(C,\{p_i\},\{\Lambda_i\})$ (Betti moduli stacks), 
as well as the associated good moduli spaces $\modulispace_{\vec{r}}(C,\{p_i\},\{\Lambda_i\})$ and $\modulispace_{L}(C,\{p_i\},\{\Lambda_i\})$ (Betti moduli spaces).

Appendix \ref{sec:irregular_RH} complements Section \ref{sec:constructible_sheaves} by a microlocal reformulation of the irregular Riemann-Hilbert correspondence over curves. In particular, 
we explain the notion of Stokes Legendrian links and justify that $\modulispace_{L}(C,\{p_i\},\{\Lambda_i\})$ are indeed a generalizations of (unramified or ramified) wild character varieties.

In Section \ref{sec:Betti_moduli_spaces_and_braids}, the main goal is to give a concrete description of $\mumon: \modulistack_{\vec{r}}\rightarrow \text{\tiny$\prod_{i=1}^k$}\Mloc_{\vec{r}_i}(\Lambda_i)$ (Theorem {\ref{thm:description_of_Betti_moduli_spaces}}). In Section \ref{subsec:diagram_calculus_of_block_matrices}, we introduce some diagram calculus of block matrices. In Section \ref{subsec:Betti_moduli_stacks_associated_to_positive_braids} complemented by Appendix \ref{sec:prove_local_diagram_for_Betti_moduli_stacks}, 
we deal with the local problem, i.e. Betti moduli stacks associated to positive braids. 
In Section \ref{subsec:Betti_moduli_stacks_over_punctured_Riemann_surfaces}-\ref{subsec:Betti_moduli_spaces_over_punctured_Riemann_surfaces}, we deal with the global problem via the gluing property, i.e. Betti moduli stacks/spaces over punctured Riemann surfaces.

In Section \ref{sec:Betti_moduli_spaces_over_punctured_P1_and_braid_varieties}, we give the cell decomposition for the Betti moduli space 
$\modulispace_B=\modulispace_L(\mathbb{C}P^1,\{\infty\},\{(\Delta\beta\Delta)^{\circ}\})$ with $L\in\Mloc_{\vec{r}}((\Delta\beta\Delta)^{\circ})$ and $\vec{r}=1$. The main tool is the cell decomposition of braid varieties.

In Section \ref{sec:main_theorem}, we firstly review the background on dual boundary complexes, and then prove the main theorem (Theorem \ref{thm:main}).

\subsection*{Acknowledgements}
\addtocontents{toc}{\protect\setcounter{tocdepth}{1}}
This work was initiated during the author's visit to IHES in 2020. 
The author expresses gratitude to IHES for its support and M. Kontsevich for a valuable conversation. Special thanks to Penghui Li for helpful discussions at YMSC.
Since the first version, the author received valuable comments and suggestions for revision from several individuals, including the anonymous referee, Vivek Shende, Philip Boalch, etc.
During the preparation of the initial version, the author was supported by the 2021 International Postdoctoral Exchange Fellowship (Talent-Introduction Program) at YMSC.
The revised version was completed during a visit invited by Song-Yan Xie at AMSS, CAS.

\section{Constructible sheaves and Legendrian knots}\label{sec:constructible_sheaves}

In this section, we review the microlocal theory of sheaves with a general reference to \cite{KS90}.
The specific focus of this article is on constructible sheaves defined on a surface $C$, 
with micro-support at infinity contained within a specified collection of Legendrian links near the punctures $\{p_i\}$.

\subsection{Micro-support}
Let $M$ be a real analytic manifold and $\field$ be a fixed base field (or more generally, commutative ring). 
By a sheaf on $M$, we mean a \emph{complex of sheaves of $\field$-modules} on $M$. Denote by $\Sh(M;\field)$ 
the triangulated dg category of sheave on $M$. Given a sheaf $\mathcal{F}$ on $M$, the notion of \emph{micro-support} 
(or \emph{singular support}) $SS(\mathcal{F})$, roughly speaking, measures the co-directions in $T^*M$ along which 
the local sections of $\mathcal{F}$ fail to propagate. More precisely, we have

\begin{definition}[Micro-support. {\cite[Def.5.1.2]{KS90}}]\label{def:micro-support}
Let $p=(x_0,\xi_0)\in T^*M$ be a covector. We say $p\notin SS(\mathcal{F})$ if there exists 
a neighborhood $U$ of $p$, such that for any $x_1\in M$ and $C^1$-function $\varphi$ on a neighborhood of $x_1$ 
satisfying $(x_1,d\varphi(x_1))\in U$ and $\varphi(x_1)=0$, we have
\[
R\Gamma_{\{\varphi(x)\geq 0\}}(\mathcal{F})_{x_1}\simeq 0.
\]
Equivalently, via the exact triangle 
$R\Gamma_{\{\varphi(x)\geq 0\}}(\mathcal{F})\rightarrow\mathcal{F}\rightarrow R\Gamma_{\{\varphi(x)<0\}}(\mathcal{F})\xrightarrow[]{+1}$,
we have a quasi-isomorphism
\[
\mathcal{F}_{x_1}\xrightarrow[]{\sim} R\Gamma_{\{\varphi(x)<0\}}(\mathcal{F})_{x_1}.
\]
\end{definition}

\noindent{}The intuitive meaning of the last quasi-isomorphism is exactly that the local sections of $\mathcal{F}$ 
propagate along the co-direction $p$. 
The key properties of the micro-support $SS(\mathcal{F})$ are as follows:
\begin{enumerate}[wide,labelwidth=!,labelindent=0pt]
\item
$SS(\cF)\cap 0_M=Supp(\cF)$ is the support of $\cF$, where $0_M$ the zero section of $T^*M$.

\item
For any sheaf $\cF$, $SS(\cF)$ is a conic (i.e. invariant under the action of $\RR_+$ which 
scales the cotangent fibers) and closed co-isotropic subset of $T^*M$.

\item
If $\cF$ is a constructible sheaf with respect to a Whitney stratification $\cS$, then $SS(\cF)$ is 
a conic Lagragian (i.e. Lagragian wherever it's smooth) subset of $T^*M$ with 
$SS(\cF)\subset L_{\cS}\coloneqq \cup_{i\in\cS}T^*_{S_i}M$.

\item(Triangular inequality)
If $\cF_1\rightarrow \cF_2\rightarrow \cF_3\xrightarrow[]{+1}$ is an exact triangle in $\Sh(M;\field)$, 
then $SS(\cF_i)\subset SS(\cF_j)\cup SS(\cF_k)$ for all distinct $i,j,k\in\{1,2,3\}$.

\item(Microlocal Morse lemma)
If $f:M\rightarrow \RR$ is a smooth function such that $(x,df(x))\notin SS(\cF)$ for all $x\in f^{-1}([a,b])$, 
and $f$ is proper on the support of $\cF$. Then the restriction map is a quasi-isomorphism:
\[
R\Gamma(f^{-1}(-\infty,b);\cF)\xrightarrow[]{\sim} R\Gamma(f^{-1}(-\infty,a);\cF)
\]  
\end{enumerate}

Let $T^{\infty}M:=(T^*M\setminus 0_M)/\mathbb{R}_+$ (the co-directions at infinity) be the co-sphere bundle of $M$, 
and let $\Lambda\subset T^{\infty}M$ be a given Legendrian subset. 
\begin{definition}
We \emph{define} $\Sh_{\Lambda}(M)$ to be the triangulated dg category of constructible sheaves of 
perfect $\field$-modules on $M$ whose micro-support at infinity is contained in $\Lambda$, localized at quasi-isomorphisms. 
If $\sigma$ is a finite set of points in $M$ which is disjoint from the front projection of $\Lambda$, 
\emph{define} $\Sh_{\Lambda}(M,\sigma)$ to be the full (still triangulated) sub-category of $\Sh_{\Lambda}(M)$ 
consisting of sheaves whose stalks at $\sigma$ vanish.
 \end{definition}
We will always assume $\Lambda\subset T^{\infty}M$ is a smooth Legendrian submanifold, 
and the front projection $\Lambda\rightarrow M$ is in generic position. This is always satisfied in our context.

The categories we've defined are Legendrian isotopy invariants:
\begin{proposition}\cite[Prop.2.7]{STWZ19}\label{prop:invariance_of_sheaf_cats}
Any Legendrian isotopy $\Lambda\sim\Lambda'$ in the complement of the conormal of $\sigma$ induces a dg equivalence of categories 
\[
\Sh_{\Lambda}(M,\sigma)\simeq\Sh_{\Lambda'}(M,\sigma).
\]
\end{proposition}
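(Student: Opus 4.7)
The plan is to apply the Guillermou--Kashiwara--Schapira (GKS) theorem on contact-isotopy invariance of microlocal sheaf categories. First, I would upgrade the given Legendrian isotopy $\{\Lambda_t\}_{t\in[0,1]}$, which by hypothesis lies inside $T^\infty M\setminus T^\infty_\sigma M$, to a compactly supported contact isotopy $\{\phi_t\}$ of $T^\infty M$ with $\phi_0=\mathrm{id}$ and $\phi_t(\Lambda)=\Lambda_t$; this uses the standard extension of a Legendrian isotopy to an ambient contact isotopy. Using a cutoff near $\sigma$, I would arrange that $\{\phi_t\}$ is the identity on a conic neighborhood of $T^\infty_\sigma M$, which is possible precisely because the trace of the isotopy avoids that conormal.

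Next, GKS produces a sheaf kernel $K\in \Sh(M\times M\times[0,1];\field)$ whose micro-support is controlled by the conical lift of $\{\phi_t\}$, such that convolution $\cF\mapsto K_t\circ\cF$ is an auto-equivalence of $\Sh(M;\field)$ that carries $SS^\infty(\cF)=\Lambda$ to $\phi_t(\Lambda)=\Lambda_t$. Setting $t=1$ already yields the desired dg equivalence $\Sh_\Lambda(M;\field)\simeq \Sh_{\Lambda'}(M;\field)$, with quasi-inverse given by convolution with the kernel of the reversed isotopy.

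To descend to the subcategories cutting out $\sigma$, I need the equivalence to preserve the vanishing-stalk condition along $\sigma$. Since $\{\phi_t\}$ has been normalized to be the identity on a conic neighborhood of $T^\infty_\sigma M$, the GKS kernel $K_t$ can be arranged to act as the identity functor on germs at each $x\in\sigma$; concretely, one should verify a quasi-isomorphism $(K_t\circ\cF)_x\simeq\cF_x$ for $x\in\sigma$. This would preserve the property of vanishing stalks along $\sigma$ in both directions, giving the induced equivalence $\Sh_\Lambda(M,\sigma)\simeq \Sh_{\Lambda'}(M,\sigma)$.

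The main obstacle I expect is the first step: engineering the contact isotopy extension $\{\phi_t\}$ so that it simultaneously realizes $\Lambda\sim\Lambda'$, is compactly supported, and can be cut off to be trivial near $T^\infty_\sigma M$, together with the subsequent verification that the resulting GKS kernel really restricts to the identity on germs at $\sigma$. This is the geometric heart of the argument and is where the hypothesis ``in the complement of the conormal of $\sigma$'' is genuinely used; once it is in place, the rest of the proposition follows formally from GKS and the naturality of the convolution action.
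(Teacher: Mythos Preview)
The paper does not supply its own proof of this proposition: it is stated with a citation to \cite[Prop.~2.7]{STWZ19} and used as a black box. Your GKS-based argument is exactly the standard mechanism behind that cited result, so there is nothing to compare against in this paper itself; your outline is correct and matches the approach one finds in the literature.
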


\subsection{Microlocal monodromy}
There's a refinement encoding the microlocal information on $\Lambda$.
\begin{definition}[Microlocal stalk]
Given $\mathcal{F}\in\Sh_{\Lambda}(M)$ and any $(x,\xi)\in\Lambda$, the \emph{microlocal stalk} 
$\mathcal{F}|_{(x,\xi)}$ of $\mathcal{F}$ at $(x,\xi)$ is defined as follows: 
Take a function $\varphi$ on a neighborhood of $x$ such that $\varphi(x)=0$ and $d\varphi(x)=\xi$, then 
$\mathcal{F}|_{(x,\xi)}:=\mathrm{Cone}(\mathcal{F}_x\rightarrow R\Gamma_{\{\varphi<0\}}(\mathcal{F})_x)
\isomorphic R\Gamma_{\{\varphi\geq 0\}}(\mathcal{F})_x[1]$ is independent of the choice of $\varphi$.
\end{definition}

In this article, the pair $(M,\Lambda):=(C,\Lambda)$ we will consider fall into two classes:
\begin{enumerate}[wide,labelwidth=!, labelindent=0pt]
\item
$C$ is a closed Riemann surface with punctures $\{p_1,\ldots,p_k\}$.
$\Lambda=\cup\Lambda_i\subset T^{\infty}C$ is a union of Legendrian links $\Lambda_i$, 
such that the front projection\footnote{Pick any Riemannian metric on $C$ so that $T^{\infty}C$ is 
identified with the sphere (or unit tangent) bundle. Then, $\Lambda_i$ is identified with 
its front projection together with a unit normal vector field. We always \textbf{assume} that 
the normal vector field points toward $p_i$.} 
of $\Lambda_i$ has no cusp and traverses around $p_i$. 
See Figure \ref{fig:microlocal_Riemann_surface} for an illustration.
We always take the \emph{zero Maslov potential} $\mu$ for $\Lambda$, i.e. 
the zero grading on $\pi_0(\Lambda)$.

\item
$C=\mathbb{C}P^1\isomorphic\mathbb{R}_{x,z}^2\sqcup\{\infty\}$, 
$\Lambda\subset J^1\mathbb{R}_x\isomorphic (\mathbb{R}_{x,y,z}^3,\alpha=dz-ydx)\subset T^{\infty}M$ is a Legendrian link, 
whose front projection $\pi_{xz}(\Lambda)$ in $\mathbb{R}_{xz}^2$ is equipped with a $\mathbb{Z}$-valued Maslov potential, 
that is, a map 
\[
\mu:\text{strands}(\pi_{xz}(\Lambda))\rightarrow \mathbb{Z}\]
such that, $\text{strands}(\pi_{xz}(\Lambda))$ is the set of connected components of the immersed curve 
$\pi_{xz}(\Lambda)\setminus\{\text{cusps}\}$, and when two strands meet at a cusp, then 
$\mu(\text{upper strand})=\mu(\text{lower strand})+1$.
\end{enumerate}

\begin{figure}[!htbp]
  \begin{center}
  \begin{tikzpicture}[baseline=-.5ex,scale=0.2]
  \begin{scope}
   
  \draw[thick] (-20,5) to[out=-30,in=180] (-16,3.5) to[out=0,in=-150] (-12,5);
  \draw[thick] (-19.8,4.9) to[out=30,in=180] (-16,6.5) to[out=0,in=150] (-12.2,4.9);

  \draw[thick] (17,5) to[out=-30,in=180] (21,3.5) to[out=0,in=-150] (25,5);
  \draw[thick] (17.2,4.9) to[out=30,in=180] (21,6.5) to[out=0,in=150] (24.8,4.9);

  \draw[thick] (-18,0) circle (1);  
  \draw[thin] (-18,0) circle (0.1);
  \draw (-18,-1) node[below] {{\tiny$\Lambda_1$}};

  \draw[thick] (0,2) to[out=0,in=180] (2,3);
  \draw[white, fill=white] (1,2.5) circle (0.2);
  \draw[thick] (0,3) to[out=0,in=180] (2,2) to[out=0,in=180] (4,3);
  \draw[white, fill=white] (3,2.5) circle (0.2);
  \draw[thick] (2,3) to[out=0,in=180] (4,2) to[out=0,in=180] (6,3);
  \draw[white, fill=white] (5,2.5) circle (0.2);
  \draw[thick] (4,3) to[out=0,in=180] (6,2);
  \draw[white, fill=white] (7,2.5) circle (0.2);
  
  \draw[thick] (0,3) to[out=180,in=-90] (-0.5,4) to[out=90,in=180] (3,6) to[out=0,in=90] (6.5,4) to[out=-90,in=0] (6,3);
  \draw[thick] (0,2) to[out=180,in=-90] (-1,4) to[out=90,in=180] (3,6.5) to[out=0,in=90] (7,4) to[out=-90,in=0] (6,2);
  \draw[thin] (3,4.3) circle (0.1); 
  
  \draw (3,2) node[below] {{\tiny$\Lambda_2$}};
  
  \draw[thick] (21,-3) to[out=0,in=180] (23,-2);
  \draw[white, fill=white] (22,-2.5) circle (0.2);
  \draw[thick] (21,-2) to[out=0,in=180] (23,-3) to[out=0,in=180] (25,-2);
  \draw[white, fill=white] (24,-2.5) circle (0.2);
  \draw[thick] (23,-2) to[out=0,in=180] (25,-3);
  
  \draw[thick] (21,-2) to[out=180,in=-90] (20.5,-1) to[out=90,in=180] (23,0.5) to[out=0,in=90] (25.5,-1) to[out=-90,in=0] (25,-2);
  \draw[thick] (21,-3) to[out=180,in=-90] (20,-1) to[out=90,in=180] (23,1) to[out=0,in=90] (26,-1) to[out=-90,in=0] (25,-3);
  \draw[thin] (23,-0.8) circle (0.1); 
  
  \draw (23,-3) node[below] {{\tiny$\Lambda_3$}};
  
  \draw[thick] (-28,5) to[out=90,in=180] (-16,15) to[out=0,in=180] (3,10) to[out=0,in=180] (21,15) to[out=0,in=90] (34,5);
  \draw[thick] (-28,5) to[out=-90,in=180] (-16,-7) to[out=0,in=180] (3,-1) to[out=0,in=180] (21,-7) to[out=0,in=-90] (34,5);
  
  \end{scope}
  \end{tikzpicture}
  \end{center}
  \caption{An example of $(C,\{p_i\}_{i=1}^k,\{\Lambda_i\}_{i=1}^k)$: a genus $g$ closed Riemann surface $C$ with 
  Legendrian links $\{\Lambda_i\}$ encircling the punctures $\{p_i\}$. In the figure, $g=2$, $k=3$. The Legendrian `unknot' 
  $\Lambda_1$ means the regular singularity at $p_1$.}
  \label{fig:microlocal_Riemann_surface}
  \end{figure}

\begin{definition/proposition}[Microlocal monodromy. {\cite[Def.5.4]{STZ17}}]
Let $(C,\Lambda)$ be as above.
\begin{enumerate}[wide,labelwidth=!, labelindent=0pt]
\item
There is a natural functor
\[
\mumon: \Sh_{\Lambda}(C)\rightarrow\mathcal{L}oc(\Lambda)
\]
called \emph{microlocal monodromy}, such that for any generic point $(x,\xi)$ of $\Lambda$, we define
\[
\mumon(\mathcal{F})(x,\xi):=\mathcal{F}|_{(x,\xi)}[-\mu(x)].
\]
Here, $\mathcal{L}oc(\Lambda)$ is the triangulated dg category of (complexes of) local systems of perfect $\field$-modules 
on $\Lambda$.

\noindent{}$\mathcal{C}_{\mathrm{B}}(C,\{p_i\},\{\Lambda_i\}):=\Sh_{\cup_i\Lambda_i}(C,\{p_i\})$, 
the microlocal monodromy then restricts to 
\[
\mumon:\mathcal{C}_{\mathrm{B}}(C,\{p_i\},\{\Lambda_i\})\rightarrow\mathcal{L}oc(\Lambda).
\]
\item
For any natural number $r$\footnote{Throughout the context, the same discussions apply if we replace $r$ by a map $\vec{r}:\pi_0(\Lambda)\rightarrow \mathbb{Z}_{\geq 0}$.}, we say a sheaf $\mathcal{F}\in\Sh_{\Lambda}(C)$ has \emph{microlocal rank $r$} 
if the microlocal monodromy $\mumon(\mathcal{F})$ is a local system of free $\field$-modules of rank $r$ 
concentrated in degree $0$.
Define $\mathcal{C}_r(C,\{p_i\},\{\Lambda_i\})$ to be the full dg sub-category 
of $\mathcal{C}_{\mathrm{B}}(C,\{p_i\},\{\Lambda_i\})$ of microlocal rank $r$ objects.
\end{enumerate}
\end{definition/proposition}

The following is a consequence of Proposition \ref{prop:invariance_of_sheaf_cats}:
\begin{corollary}\label{cor:invariance_of_sheaf_cats}
Any Legendrian isotopy $\Lambda=\cup_i\Lambda_i\sim\Lambda'=\cup_i\Lambda_i'$ in the complement of the conormal of 
$\sigma=\{p_i\}$ induces a dg equivalence of categories preserving the microlocal monodromy:
\[
\begin{tikzcd}
\mathcal{C}_{\mathrm{B}}(C,\{p_i\},\{\Lambda_i\})\arrow{r}{\simeq}\arrow{d}{\mumon} & \mathcal{C}_{\mathrm{B}}(C,\{p_i\},\{\Lambda_i'\})\arrow{d}{\mumon}\\
\mathcal{L}oc(\Lambda)\ar[equal]{r} & \mathcal{L}oc(\Lambda')
\end{tikzcd}
\]
In particular,  we obtain a dg equivalence $\mathcal{C}_r(C,\{p_i\},\{\Lambda_i\})\simeq\mathcal{C}_r(C,\{p_i\},\{\Lambda_i'\})$.
\end{corollary}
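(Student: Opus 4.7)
The plan is to deduce the corollary from Proposition \ref{prop:invariance_of_sheaf_cats} in two steps: first extract the top-row equivalence from it, and then argue that this equivalence intertwines the microlocal monodromy functor $\mumon$. For the first step, by definition $\mathcal{C}_{\mathrm{B}}(C,\{p_i\},\{\Lambda_i\}) = \Sh_{\cup_i \Lambda_i}(C,\{p_i\})$, so applying Proposition \ref{prop:invariance_of_sheaf_cats} with $M = C$ and $\sigma = \{p_i\}$ yields $\mathcal{C}_{\mathrm{B}}(C,\{p_i\},\{\Lambda_i\}) \simeq \mathcal{C}_{\mathrm{B}}(C,\{p_i\},\{\Lambda_i'\})$ directly.

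Next I would pin down the bottom-row identification. Any Legendrian isotopy $\{\Lambda_t\}_{t\in[0,1]}$ is in particular a smooth isotopy of submanifolds of $T^{\infty}C$, so it yields a canonical diffeomorphism $\Lambda \xrightarrow{\sim} \Lambda'$ along which local systems transport, giving the horizontal equality of the lower row. Moreover, the Maslov potential transports continuously along the isotopy, so the Maslov potentials on $\Lambda$ and $\Lambda'$ can be matched by the same diffeomorphism. To check commutativity of the square, I would appeal to the fact that the equivalence of Proposition \ref{prop:invariance_of_sheaf_cats} is implemented by the Guillermou--Kashiwara--Schapira sheaf-theoretic quantization of a compactly supported contact isotopy extending the given Legendrian isotopy, chosen to be the identity near the conormals of $\sigma$. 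The GKS kernel preserves microlocal stalks under the isotopy-induced map $\Lambda \to \Lambda'$, and once the Maslov potentials have been matched as above, the homological shift $[-\mu(x)]$ in the definition of $\mumon$ matches on the two sides. This is enough to produce the claimed commuting square.

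Finally, the last assertion for $\mathcal{C}_r$ is then formal: the property that $\mumon(\mathcal{F})$ is a local system of free $\field$-modules of rank $r$ concentrated in degree $0$ is preserved by any commuting square of the above form, so the horizontal equivalence restricts to $\mathcal{C}_r(C,\{p_i\},\{\Lambda_i\}) \simeq \mathcal{C}_r(C,\{p_i\},\{\Lambda_i'\})$. The only real obstacle is microlocal compatibility of the isotopy-induced equivalence --- namely, verifying that the GKS quantization kernel genuinely identifies $\mathcal{F}|_{(x,\xi)}$ with $\Phi(\mathcal{F})|_{(x',\xi')}$ with the right Maslov shift. This is standard in the microlocal sheaf literature, and I expect to cite it rather than reproduce it; the remaining verifications are bookkeeping.
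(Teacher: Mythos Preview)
Your proposal is correct and follows the same approach as the paper, which simply states the corollary as ``a consequence of Proposition~\ref{prop:invariance_of_sheaf_cats}'' without further argument. You have supplied the details the paper leaves implicit---namely the GKS-kernel justification for compatibility with $\mumon$ and the transport of Maslov potentials---and these are exactly the standard ingredients one would cite.
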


\subsection{The Betti moduli space}

Let $\mathcal{C}_r(C,\{p_i\},\{\Lambda_i\})$ be the dg category of micro-local rank $r$ constructible sheaves 
on $(C,\{p_i\},\{\Lambda_i\})$. We can form the moduli stack $\modulistack_r(C,\{p_i\},\{\Lambda_i\})$.

\begin{definition}
Define a functor 
\[
\modulistack_r(C,\{p_i\},\{\Lambda_i\};\field): \field-\CAlg=\Aff_{\field}^{op} \rightarrow \Gpd
\]
by sending $A\in\field-\CAlg$ to $\mathcal{C}_r(C,\{p_i\},\{\Lambda_i\};A)^{gpd}$, 
where $\mathcal{C}_r(C,\{p_i\},\{\Lambda_i\};A)$ is the dg category of microlocal rank $r$ constructible sheaves 
of $A$-modules on $(C,\{p_i\},\{\Lambda_i\})$, and $\mathcal{C}_r(C,\{p_i\},\{\Lambda_i\};A)^{gpd}$ is the groupoid 
associated to $\mathcal{C}_r(C,\{p_i\},\{\Lambda_i\};A)$: 
the objects are objects in $\mathcal{C}_r(C,\{p_i\},\{\Lambda_i\};A)$ without negative self-extensions \cite[Rmk.2.18]{STWZ19}; 
the morphisms are quasi-isomorphisms in $\mathcal{C}_r(C,\{p_i\},\{\Lambda_i\};A)$ up to homotopy.
\end{definition}
We remark that, according to \cite[Def./Prop. 2.16]{STWZ19}, the moduli stack $\modulistack_r(C,\{p_i\},\{\Lambda_i\};\field)$ 
is the truncation of the derived moduli stack $\mathbb{R}\modulistack_r(C,\{p_i\},\{\Lambda_i\};\field)$ \cite{TV07} of objects 
in the dg category $\mathcal{C}_r(C,\{p_i\},\{\Lambda_i\})$.
In addition, $\modulistack_r(C,\{p_i\},\{\Lambda_i\};\field)$ is an \emph{algebraic stack} over $\field$ in the classical sense. 

To simplify the study of the (fine moduli) stack $\modulistack$ representing a given moduli problem, 
one would like to associate an algebraic space which still captures much of the geometry. Classically, 
one uses the weaker notion of a \emph{coarse moduli space}, whose points are in bijection with the isomorphism classes 
of objects over the given algebraically closed field. For algebraic stacks with finite inertia 
(e.g. separated Deligne-Mumford stacks), this works well, the coarse moduli spaces exist by the Keel-Mori theorem. 
However, for algebraic stacks with infinite stabilizers as in our case, the coarse moduli space do not exist in general. 

In fact, what we need is a generalization of coarse moduli spaces, i.e. the notion of \emph{good moduli spaces} associated 
to algebraic stacks (possibly with infinite stabilizers) \cite{Alp13}, which we recall below.

\subsubsection{Good moduli spaces for algebraic stacks}

\begin{definition}[{\cite[Def.4.1]{Alp13}}]
A quasi-compact morphism $f:\mathfrak{X}\rightarrow Y$ from an algebraic stack to an algebraic space 
(over $\field$) is a \emph{good moduli space}, if
\begin{enumerate}
\item
The push-forward functor on quasi-coherent sheaves is exact.
\item
The induced morphisms on sheaves $\mathcal{O}_Y\rightarrow f_*\mathcal{O}_{\mathfrak{X}}$ is an isomorphism.
\end{enumerate}
For simplicity, we say in this case that $Y$ is a \emph{good moduli space} for $\mathfrak{X}$.
\end{definition}

A good moduli space captures much of the desired geometric properties in the following sense:
\begin{lemma}[{\cite[\S 1.2]{Alp13}}]
If $f:\mathfrak{X}\rightarrow Y$ is a good moduli space, then
\begin{enumerate}
\item
$f$ is surjective and universally closed.
\item
For any algebraically closed field $k$ over $\field$, two geometric points $x_1$ and $x_2$ in $\mathfrak{X}(k)$ 
are identified in $Y$ if and only if their closures $\overline{\{x_1\}}$ and $\overline{\{x_2\}}$ intersect.
\item
If $Y'\rightarrow Y$ is any map of algebraic spaces over $\field$, then 
the base change $f_{Y'}:Y'\times_Y\mathfrak{X}\rightarrow Y'$ is a good moduli space.
\item
If $\mathfrak{X}$ is locally noetherian, then $f$ is universal for maps to algebraic spaces. In particular, 
the good moduli space, when exists, is \emph{unique}.
\item
If $\mathfrak{X}$ is finite type over $\field$, then so is $Y$.
\end{enumerate} 
\end{lemma}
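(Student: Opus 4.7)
The plan is to verify each of the five properties by exploiting only the two defining axioms: exactness of $f_{*}$ on quasi-coherent sheaves and the isomorphism $\mathcal{O}_Y\cong f_{*}\mathcal{O}_{\mathfrak{X}}$. Since all listed properties are local on the target $Y$, the first step is to reduce, via an affine \'etale cover, to the case where $Y=\mathrm{Spec}\,A$ is affine. In that setting, quasi-coherent sheaves on $\mathfrak{X}$ correspond to modules over $A=\Gamma(\mathfrak{X},\mathcal{O}_{\mathfrak{X}})$, and the exactness axiom becomes exactness of the global sections functor on quasi-coherent $\mathcal{O}_{\mathfrak{X}}$-modules.

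For (1) and (2) the central observation is that closed substacks descend to closed subspaces: if $\mathcal{Z}\hookrightarrow\mathfrak{X}$ is cut out by an ideal sheaf $\mathcal{I}\subset\mathcal{O}_{\mathfrak{X}}$, then exactness of $f_{*}$ applied to $0\to\mathcal{I}\to\mathcal{O}_{\mathfrak{X}}\to\mathcal{O}_{\mathfrak{X}}/\mathcal{I}\to 0$ gives $f_{*}(\mathcal{O}_{\mathfrak{X}}/\mathcal{I})\cong\mathcal{O}_Y/f_{*}\mathcal{I}$, so the scheme-theoretic image of $\mathcal{Z}$ in $Y$ is exactly $V(f_{*}\mathcal{I})$. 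Taking $\mathcal{Z}=\mathfrak{X}$ forces $f$ to be surjective. For (2), the disjointness of the closures of two geometric points $x_1,x_2$ corresponds to $\mathcal{I}_1+\mathcal{I}_2=\mathcal{O}_{\mathfrak{X}}$, which by exactness is equivalent to $f_{*}\mathcal{I}_1+f_{*}\mathcal{I}_2=\mathcal{O}_Y$, i.e.\ to disjointness of the images in $Y$; otherwise the two closures share a point of $Y$ and therefore a geometric point of $\mathfrak{X}$.

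Statement (3) is the main technical obstacle, since $Y'\to Y$ is not assumed flat, so classical flat base change does not apply. The plan is to first handle $Y'=\mathrm{Spec}\,B\to Y=\mathrm{Spec}\,A$ affine, where one verifies directly on modules that $B\otimes_A f_{*}\mathcal{F}\cong (f_{Y'})_{*}(\mathrm{pr}_{\mathfrak{X}}^{*}\mathcal{F})$ and that this operation is exact in $\mathcal{F}$; the key is that $f_{*}\mathcal{O}_{\mathfrak{X}}=A$ lets one realize the base-changed pushforward as the ordinary $A$-linear extension of scalars. One then glues along affine covers of a general $Y'$. Universal closedness in (1) follows by combining the scheme-theoretic image argument with (3) applied to an arbitrary base change.

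For (4), given $g:\mathfrak{X}\to Z$ with $Z$ an algebraic space, by (2) any two points with intersecting closures have equal image, so $g$ factors set-theoretically through the underlying space of $f$; passing to affine \'etale charts of $Z$, the adjoint map $g^{\sharp}:g^{*}\mathcal{O}_{Z}\to\mathcal{O}_{\mathfrak{X}}$ yields by pushforward a map of rings landing in $f_{*}\mathcal{O}_{\mathfrak{X}}=\mathcal{O}_Y$, producing the required factorization $g=\tilde g\circ f$. Uniqueness of $Y$ is then formal from this universal property. Finally for (5), one needs to show $A=f_{*}\mathcal{O}_{\mathfrak{X}}$ is finitely generated over $\field$ when $\mathfrak{X}$ is of finite type; the plan is to realize $A$ as an invariant subalgebra in a finitely generated algebra coming from a smooth presentation of $\mathfrak{X}$, and apply a Nagata-type finite generation theorem, using that the implicit reductivity of stabilizers (forced by exactness of $f_{*}$) ensures finite generation of the invariants.
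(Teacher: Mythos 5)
The paper itself offers no proof of this lemma; it is quoted verbatim from Alper's foundational paper on good moduli spaces and simply cited as \cite[\S 1.2]{Alp13}. There is therefore no ``paper's own proof'' to compare against, and your proposal is a blind reconstruction of Alper's arguments.

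As a reconstruction it captures the right general flavor --- reduce to $Y$ affine, exploit exactness of $f_*$ on short exact sequences of ideal sheaves to descend closed substacks, and build the universal property out of the factorization through $f_*\mathcal{O}_{\mathfrak{X}}=\mathcal{O}_Y$. However a few steps are asserted rather than proved. In (2) you only handle one implication (disjoint closures implies disjoint images); the converse, that points whose closures do meet actually get sent to the \emph{same} point of $Y$, needs the additional fact that each fiber of $f$ contains a unique closed point and that $f$ sends a point to the image of the closed point in its closure --- this is a separate argument in Alper. In (3) you state the base-change isomorphism $B\otimes_A f_*\mathcal{F}\cong (f_{Y'})_*(\mathrm{pr}_{\mathfrak{X}}^*\mathcal{F})$ as the thing to ``verify directly,'' but that isomorphism \emph{is} the content; the actual mechanism is a projection formula $f_*(\mathcal{F}\otimes f^*M)\cong f_*\mathcal{F}\otimes_A M$, proved by reducing to finitely presented $M$ via free presentations and using exactness of $f_*$ to pass the (possibly non-flat) tensor through. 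Your phrase ``the key is that $f_*\mathcal{O}_{\mathfrak{X}}=A$ lets one realize the pushforward as extension of scalars'' does not by itself supply this. Finally, (5) is the most serious gap: ``the implicit reductivity of stabilizers (forced by exactness of $f_*$) ensures finite generation of the invariants'' is not a proof. Reductivity of stabilizers does follow from cohomological affineness for points of $\mathfrak{X}$, but going from pointwise reductivity to global finite generation of $\Gamma(\mathfrak{X},\mathcal{O}_{\mathfrak{X}})$ requires either an explicit presentation of $\mathfrak{X}$ as a quotient stack (not available in general) or a substantial local-structure input, and Alper's actual argument is more delicate. As a sketch of the shape of the argument your proposal is reasonable; as a proof it has real gaps in (2), (3), and (5).
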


We also recall two important examples of good moduli spaces:
\begin{lemma}[{\cite[E.g. 8.1, 8.3]{Alp13}}]\label{lem:example_of_good_moduli_space}
Let $\field$ be a base field.
\begin{enumerate}
\item
If $\mathfrak{X}$ is an algebraic stack over $\field$ with finite inertia, and $f:\mathfrak{X}\rightarrow Y$ is 
the associated coarse moduli space. Suppose the push-forward functor $f_*$ on quasi-coherent sheaves is exact, 
then $f$ is a good moduli space. 
\item
If $G$ is a linear reductive algebraic group acting on an affine scheme $X=\Spec A$ over $\field$, then 
the natural map $f:[X/G]\rightarrow \Spec~A^G$ is a good moduli space. Here, $[X/G]$ is the quotient stack.
\end{enumerate}
\end{lemma}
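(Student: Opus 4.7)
The plan is to verify, for each part, the two defining conditions of a good moduli space: exactness of the push-forward $f_*$ on quasi-coherent sheaves, and the isomorphism $\mathcal{O}_Y\xrightarrow{\sim}f_*\mathcal{O}_{\mathfrak{X}}$. Both parts come down to understanding $f_*$ concretely on $\mathrm{QCoh}(\mathfrak{X})$, with part (2) being essentially a direct calculation, and part (1) reducing to a finite-group variant of (2) via the Keel--Mori theorem.

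For part (2), I would first identify $\mathrm{QCoh}([X/G])$ with the abelian category of $G$-equivariant $A$-modules, under which the structural map $f:[X/G]\to\Spec A^G$ sends an equivariant module $M$ to its module of invariants $M^G$, viewed as an $A^G$-module. The exactness condition for $f_*$ then becomes exactness of the functor $(-)^G$ on $G$-equivariant $A$-modules. Linear reductivity of $G$ gives exactness of $(-)^G$ on rational $G$-representations by definition, and a standard argument (exhaust equivariant $A$-modules by locally finite rational $G$-subrepresentations and use that invariants commute with filtered colimits) extends this to the whole category. The isomorphism $\mathcal{O}_{\Spec A^G}\xrightarrow{\sim} f_*\mathcal{O}_{[X/G]}$ is immediate: $f_*\mathcal{O}_{[X/G]}=A^G$ is the structure sheaf of $Y=\Spec A^G$ by construction.

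For part (1), I would invoke the Keel--Mori theorem: since $\mathfrak{X}$ has finite inertia, the coarse moduli space $Y$ exists as an algebraic space, and there is an étale cover $Y'\to Y$ together with a Cartesian square $Y'\times_Y\mathfrak{X}\simeq [\Spec B/H]$ for a finite group scheme $H$ acting on an affine $\Spec B$, with $Y'\simeq\Spec B^H$. Both conditions defining a good moduli space are étale-local on $Y$, so it suffices to check them on each such chart. The $\mathcal{O}$-isomorphism is again immediate since $Y'\simeq\Spec B^H$ is by construction locally modelled on the invariant ring. Exactness is the standing hypothesis of part (1); on each chart it amounts to exactness of $(-)^H$ on $H$-equivariant $B$-modules, which is the same statement as in part (2) with $G=H$.

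The main obstacle I expect lies in part (1): when $H$ is a finite group scheme that fails to be linearly reductive (for instance in bad characteristic), the functor $(-)^H$ need not be exact, and the exactness hypothesis in (1) is therefore genuinely required rather than automatic from Keel--Mori. The care needed in the write-up is thus twofold: first, to justify that global exactness of $f_*$ descends to exactness on each étale chart (a faithfully flat descent statement for exactness of push-forward along $f$), and second, to verify that the formation of the unit $\mathcal{O}_Y\to f_*\mathcal{O}_{\mathfrak{X}}$ commutes with the étale base change $Y'\to Y$, so that a pointwise-verified isomorphism on the cover implies the global one. Both are standard applications of flat base change, but they are the places where the argument must actually be written out rather than merely asserted.
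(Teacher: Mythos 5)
The paper does not prove this lemma: it is quoted verbatim (as Examples 8.1 and 8.3) from Alper's paper \cite{Alp13}, so there is no internal proof for your proposal to match or diverge from. That said, your reconstruction is correct and is essentially the argument one would extract from Alper's setup. For part (2), the identification $\mathrm{QCoh}([X/G])\simeq (G,A)\text{-Mod}$ with $f_*=(-)^G$ is the right starting point; the extension step you flag is in fact automatic once you note that exactness of $(-)^G$ on $(G,A)$-Mod can be tested after forgetting the $A$-module structure, since the forgetful functor to rational $G$-representations is exact and faithful and linear reductivity gives exactness there by definition. For part (1), your étale-local reduction via Keel--Mori works, but it is a bit more than necessary: the isomorphism $\mathcal{O}_Y\to f_*\mathcal{O}_{\mathfrak X}$ is already part of (or an immediate consequence of the universal property of) the coarse moduli space, so the only content of part (1) is that the exactness hypothesis, together with this built-in $\mathcal{O}$-isomorphism, directly satisfies Alper's two defining conditions. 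Your observation that exactness of $(-)^H$ can genuinely fail for a finite group scheme $H$ in bad characteristic, making the hypothesis in (1) non-vacuous, is accurate and worth keeping.
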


\noindent{}\textbf{Note}: Contrary to (2) above, when $X$ is a non-affine scheme, the quotient stack $[X/G]$ 
may not have a good moduli space, see \cite[e.g.3.16, Thm.4.1]{AHLH23}. 

Now, we can define the Betti moduli spaces in our setting. For any manifold $X$ and any color 
$\vec{n}:\pi_0(X)\rightarrow \mathbb{Z}_{\geq 0}$, let $\Mloc_{\vec{n}}(X;\field)$ be the moduli stack of rank $\vec{n}$ 
local systems on $X$. 
Fix a \emph{map/color} $\vec{r}:\pi_0(\Lambda)\rightarrow\mathbb{Z}_{\geq 0}$ so that the \emph{total rank}
\[
  r_{\tot}:=\text{\tiny$\sum_{[\Lambda_i^j]\in\pi_0(\Lambda_i)}$} \deg(\Lambda_i^j)\cdot \vec{r}([\Lambda_i^j])
\]
is independent of $i$, where $\deg(\Lambda_i^j)$ denotes the winding number of $\Lambda_i^j$ encircling $p_i$. 
Taking the microlocal monodromy, we obtain an induced map of algebraic stacks
\[
\mumon:\modulistack_{\vec{r}}(C,\{p_i\},\{\Lambda_i\};\field) \rightarrow \Mloc_{\vec{r}}(\Lambda;\field).
\]

\begin{definition}\label{def:Betti_moduli_space}
Fix a rank $\vec{r}$ local system $L$ on $\Lambda$.
The \emph{Betti moduli stack} of constructible sheaves with microlocal monodromy $L$ on $(C,\{p_i\},\{\Lambda_i\})$ is
$\modulistack_{\vec{r}}(C,\{p_i\},\{\Lambda_i\};\field):=\mumon^{-1}([L]/\mathrm{Aut}(L))$.
The \emph{Betti moduli space} of constructible sheaves with microlocal monodromy $L$ on $(C,\{p_i\},\{\Lambda_i\})$ 
is the good moduli space $\modulispace_L(C,\{p_i\},\{\Lambda_i\};\field)$ associated to the algebraic stack 
$\modulistack_L(C,\{p_i\},\{\Lambda_i\};\field)$. 
\end{definition}

\section{Betti moduli spaces and braids}\label{sec:Betti_moduli_spaces_and_braids}

In this section, we give a concrete description of the Betti moduli space via braids.
Let $(C,\{p_i\},\{\Lambda_i\})$ and $\vec{r}$ be as above. Denote $G:=GL(r_{\tot})$. Let $D_i$ be a small open disk centered 
at $p_i$ so that the front projection of $\Lambda_i$ lives in $D_i^*=D_i\setminus\{p_i\}$. Let $D_i'$ be a slightly larger disk 
so that $D_i'\setminus\overline{D}_i$ is an open annulus encircling $p_i$. Denote by $S_i$ a fixed clockwise circle in the annulus.
Denote $C^o:=C\setminus\sqcup_i\overline{D}_i$.
By the gluing property, we have a cartesian diagram of algebraic stacks compatible with taking microlocal monodromy:
\begin{equation}\label{eqn:gluing_property_for_global_Betti_moduli_stacks}
\begin{tikzcd}[row sep=1pc,column sep=0.5pc]
\modulistack_{\vec{r}}(C,\{p_i\},\{\Lambda_i\})\arrow[r]\arrow[d]\arrow[dr,phantom,"\lrcorner",very near start] 
& \modulistack_{r_{\tot}}(C^o)=\{(A_j,M_i)\in G^{2g+k}: 
\text{\tiny$\prod_{j=1}^g$}(A_{2j-1},A_{2j})\text{\tiny$\prod_{i=1}^k$}M_i=\id\}/G\arrow[d,"{\pr=\text{\tiny$\prod_i$}\pr_i}"]\\
\modulistack_{\vec{r}}(\sqcup_i D_i',\{p_i\},\{\Lambda_i\})\arrow[r,"{\mon}"]\arrow[d,"{\mumon}"] & 
\Mloc_{r_{\tot}}(\sqcup_i S_i)=\text{\tiny$\prod_{i=1}^k$}G/G\\
\Mloc_{\vec{r}}(\Lambda) & 
\end{tikzcd}
\end{equation}
Here, $\pr_i$ sends $[(A_j,M_i)]$ to $[M_i]$. Each $\Lambda_i$ can be represented by a \emph{cylindrical closure} $\beta_i^{\circ}$
of a positive braid $\beta_i$ in the following sense \cite[\S 6.1]{STZ17}:
\begin{itemize}[wide,labelwidth=!,labelindent=0pt]
\item 
The pair $(D_i'^*,\Lambda_i)$ is identified with $(S_x^1\times\mathbb{R}_z,\beta_i^{\circ})$, where $\beta_i^{\circ}$ is 
a Legendrian link in $J^1S_x^1=T^{\infty,-}(S_x^1\times\mathbb{R}_z)$ defined as 
in Figure \ref{fig:cylindrical_closure}. Then, $S_i$ may be identified with $S_x^1\times\{z_0\}$ for some large $z_0$.
\end{itemize}

\begin{figure}[!htbp]
  \vspace{-0.2in}
  \begin{center}
  \begin{tikzcd}[row sep=1pc,column sep=8pc,ampersand replacement=\&]
  
  \begin{tikzpicture}[baseline=-.5ex,scale=0.4]
  \begin{scope}
  \draw[thick] (0,0) to[out=0,in=180] (2,1);
  \draw[white, fill=white] (1,0.5) circle (0.2);
  \draw[thick] (0,1) to[out=0,in=180] (2,0) to[out=0,in=180] (4,1);
  \draw[white, fill=white] (3,0.5) circle (0.2);
  \draw[thick] (2,1) to[out=0,in=180] (4,0) to[out=0,in=180] (6,1);
  \draw[white, fill=white] (5,0.5) circle (0.2);
  \draw[thick] (4,1) to[out=0,in=180] (6,0) to[out=0,in=180] (8,1);
  \draw[white, fill=white] (7,0.5) circle (0.2);
  \draw[thick] (6,1) to[out=0,in=180] (8,0) to[out=0,in=180] (10,1);
  \draw[white, fill=white] (9,0.5) circle (0.2);
  \draw[thick] (8,1) to[out=0,in=180] (10,0);
  \draw (5,-0.5) node[below] {{\small $\beta=\sigma_1^5$}};
  \end{scope}
  \end{tikzpicture}
  
  \arrow[r,"{\tiny cylindrical~closure}"]\&
  
  \begin{tikzpicture}[baseline=-.5ex,scale=0.4]
  \begin{scope}
  \draw[dashed] (11,-2) arc(0:180:6cm and 1.2cm);
  \draw (-1,-2) arc(-180:0:6cm and 1.2cm);
  \draw[black, fill=black] (5,-2) circle (0.05);
  \draw (5,5) ellipse (6cm and 1.2cm);
  \draw[black, fill=black] (5,5) circle (0.05);
  \draw[thin,dashed] (5,-4)--(5,6);
  \draw[->] (5,5)--(5,7);
  \draw (5,7) node[right] {{\small $z$}};
  \draw[thick] (-1,-2)--(-1,5);
  \draw[thick] (11,-2)--(11,5);

  \draw[thick,green] (-1,1) to[out=-90,in=180] (0,0) to[out=0,in=180] (2,1);
  \draw[white, fill=white] (1,0.5) circle (0.2);
  \draw[thick,green] (-1,2) to[out=-90,in=180] (0,1) to[out=0,in=180] (2,0) to[out=0,in=180] (4,1);
  \draw[white, fill=white] (3,0.5) circle (0.2);
  \draw[thick,green] (2,1) to[out=0,in=180] (4,0) to[out=0,in=180] (6,1);
  \draw[white, fill=white] (5,0.5) circle (0.2);
  \draw[thick,green] (4,1) to[out=0,in=180] (6,0) to[out=0,in=180] (8,1);
  \draw[white, fill=white] (7,0.5) circle (0.2);
  \draw[thick,green] (6,1) to[out=0,in=180] (8,0) to[out=0,in=180] (10,1) to[out=0,in=-90] (11,2);
  \draw[white, fill=white] (9,0.5) circle (0.2);
  \draw[thick,green] (8,1) to[out=0,in=180] (10,0) to[out=0,in=-90] (11,1);
  
  \draw[thick,dashed,green] (11,1) arc(0:180:6cm and 1.2cm);
  \draw[thick,dashed,green] (11,2) arc(0:180:6cm and 1.2cm);
  
  \draw (5.5,-0.5) node[below] {{\small {\color{green}$\beta^{\circ}$}$\hookrightarrow J^1S_x^1$}};

  \draw[->] (0,-3) arc(-150:-30:6cm and 1.2cm);
  \draw(10,-3) node[right] {{\tiny $S_x^1$}};
   
  \end{scope}
  \end{tikzpicture}
  
  \end{tikzcd}
  \end{center}
  \vspace{-0.2in}
  \caption{Cylindrical closure: $\beta=\sigma_1^5\in\mathrm{Br}_2^+$ $\rightsquigarrow$ 
  $\beta^{\circ}\subset J^1S_x^1\isomorphic T^{\infty,-}(S_x^1\times\mathbb{R}_z)$, 
  where we identifify the Legendrian link $\beta^{\circ}$ with its front projection in $S_x^1\times\mathbb{R}_z$.}
  \label{fig:cylindrical_closure}
  \end{figure}

As a consequence, the description of $\modulistack_{\vec{r}}(C,\{p_i\},\{\Lambda_i\})$ is reduced to the following problem:
\begin{itemize}[wide,labelwidth=!,labelindent=0pt]
  \item[\textbf{Problem}:]
  Let $\beta\in\Br_n^+$ be an $n$-strand positive braid with cylindrical closure $\beta^{\circ}$ in 
  $S_x^1\times\mathbb{R}_z$. Fix a color $\vec{r}:\pi_0(\beta^{\circ})\rightarrow\mathbb{Z}_{\geq 0}$ with total rank $r_{\tot}$. 
  Fix a point $q_m$ below $\beta^{\circ}$ in $S_x^1\times\mathbb{R}_z$ and a large number $z_0\in\mathbb{R}_z$. 
  Denote
  $\modulistack_{\vec{r}}(\beta^{\circ}):=\modulistack_{\vec{r}}(S_x^1\times\mathbb{R}_z,\{q_m\},\beta^{\circ})$, 
  the moduli stack of microlocal rank $\vec{r}$ constructible sheaves in 
  $\Sh_{\beta^{\circ}}(S_x^1\times\mathbb{R}_z,\{q_m\})$. We would like to describe the following diagram of algebraic stacks:
\[
\begin{tikzcd}
\Mloc_{r_{\tot}}(S_x^1\times\{z_0\}) & \modulistack_{\vec{r}}(\beta^{\circ})\arrow[r,"{\mumon}"]\arrow[l,swap,"{\mon}"] & \Mloc_{\vec{r}}(\beta^{\circ})
\end{tikzcd}
\]
\end{itemize}

To deal with this problem, we make some preparations.

\subsection{Diagram calculus of block matrices}\label{subsec:diagram_calculus_of_block_matrices}

We introduce some diagram calculus of \textbf{block} matrices. See also \cite[\S 1.3]{Su23}.
Denote
\begin{eqnarray*}
&&\FBr_n^+:=\langle\sigma_1,\text{\tiny$\cdots$},\sigma_{n-1}\rangle;~~(\text{free monoid of $n$-strand (positive) braid presentations})\\
&&\Br_n^+:=\FBr_n^+/(\sigma_i\sigma_{i+1}\sigma_i = \sigma_{i+1}\sigma_i\sigma_{i+1}, \forall i;~\sigma_i\sigma_j =\sigma_j\sigma_i, \forall |i-j|>1);~~(\text{$n$-strand positive braids})\\
&&\permutation:\Br_n^+\rightarrow \Br_n^+/(\sigma_i^2=1,\forall i)\isomorphic S_n: \sigma_k\mapsto \ms_k:=\permutation(\sigma_k)=(k~k+1).~~(\text{underlying permuations})
\end{eqnarray*}

\noindent{}\textbf{Convention} \setword{{\color{blue}$1$}}{convention:braid_diagrams}: 
As in Figure \ref{fig:braid_diagram}, any $\beta=\sigma_{i_{\ell}}\text{\tiny$\cdots$}\sigma_{i_1}\in\FBr_n^+$ is represented by the \emph{braid diagram} $[\sigma_{i_1}|\text{\tiny$\cdots$}|\sigma_{i_{\ell}}]$ going \emph{from left to right}, where $[\beta_1|\beta_2]$ is the concatenation of $\beta_1$ with $\beta_2$. Label the left (resp. right) ends \emph{from bottom to top} by $1,2,\text{\tiny$\cdots$},n$. Denote $[n]:=\{1,2,\text{\tiny$\cdots$},n\}$.

\begin{figure}[!htbp]
\begin{center}
\begin{tikzpicture}[baseline=-.5ex,scale=0.5]
\begin{scope}

\draw[thin] (0,0) to [out=0,in=180] (2,1) to [out=0,in=180] (4,2);

\draw[white,fill=white] (1,0.5) circle(0.1);
\draw[white,fill=white] (3,1.5) circle(0.1);

\draw[thin] (0,1) to [out=0,in=180] (2,0)--(4,0);

\draw[thin] (0,2)--(2,2) to [out=0,in=180] (4,1);

\draw (0,0) node[left] {\text{\tiny$1$}};
\draw (0,1) node[left] {\text{\tiny$2$}};
\draw (0,2) node[left] {\text{\tiny$3$}};

\draw (4,0) node[right] {\text{\tiny$1$}};
\draw (4,1) node[right] {\text{\tiny$2$}};
\draw (4,2) node[right] {\text{\tiny$3$}};

\draw (2,-0.5) node[below] {\text{\tiny$[\sigma_1|\sigma_2]=\sigma_2\sigma_1$}};

\end{scope}
\end{tikzpicture}
\end{center}
\vspace{-0.2in}
\caption{Braid diagram for a positive braid: $n=3$.}
\label{fig:braid_diagram}
\end{figure}
\vspace{-0.1in}

\begin{definition}[Colored positive braids and permutations]\label{def:colored_positive_braids}
Fix $0\leq n\leq N$.
\begin{enumerate}[wide,labelwidth=!,labelindent=0pt]
\item
A \emph{colored positive braid presentation} of rank $(n,N)$ is a pair $(\beta,\vec{r})$ (or $\beta^{\vec{r}}$), where $\beta\in\FBr_n^+$ and $\vec{r}:\pi_0(\beta)\rightarrow\mathbb{N}$ is a \emph{color} of total rank $N$.
Restricting to the left ends (resp. right ends), the color can be identified with a map $\vec{r}_{\Left}:[n]\rightarrow\mathbb{N}$ (resp. $\vec{r}_{\Right}:[n]\rightarrow\mathbb{N}$). Clearly, $\vec{r}_{\Left}=\vec{r}_{\Right}\circ \permutation(\beta)$.

Denote by $\FBr_{n,N}^+$ the set of all colored positive braid presentations of rank $(n,N)$. If $[\beta_1^{\vec{r}_1}|\beta_2^{\vec{r}_2}]$ is a compatible concatenation in $\FBr_{n,N}^+$, i.e. $\vec{r}_{1,\Right}=\vec{r}_{2,\Left}$, 
define $\beta_2^{\vec{r}_2}\circ\beta_1^{\vec{r}_1}:=[\beta_1^{\vec{r}_1}|\beta_2^{\vec{r}_2}]\in\FBr_{n,N}$, called \emph{admissible composition}.

\item
Replace $\FBr_n^+$ by $\Br_n^+$ (resp. $S_n$), define a \emph{colored positive braid} (resp. \emph{colored permutation}) of rank $(n,N)$ and $\Br_{n,N}^+$ (resp. $S_{n,N}$) similarly. 
The morphism respecting admissible compositions is induced by:
\[
\permutation:\Br_{n,N}^+\rightarrow S_{n,N}: \sigma_k^{\vec{r}}\mapsto s_k^{\vec{r}}=(s_k,\vec{r}).
\]
\end{enumerate}
\end{definition}

Fix $N\in\mathbb{N}$. For any color $\vec{r}:[n]\rightarrow \mathbb{N}$ with $r_{\tot}=N$, and any matrix $\epsilon\in M_{\vec{r}(i)\times\vec{r}(j)}(\field)$, define
\[
E_{i,j}^{\vec{r}}(\epsilon):=\left(\begin{array}{cccc}
0 & \cdots & 0 & 0\\
0 & \cdots & \epsilon & 0\\
\vdots & \ddots & \vdots & \vdots\\
0 & \cdots & 0 & 0
\end{array}\right)\in M_{N\times N}(\field),
\]
where $\epsilon$ sits in the $(i,j)$-block. When there's no confusion, we may also write $E_{i,j}(\epsilon):=E_{i,j}^{\vec{r}}(\epsilon)$.

Then, we make the following
\begin{definition}\label{def:elementary_braid_matrix_diagrams}
Let $\vec{r}_{\Left},\vec{r_{\Right}}:[n]\rightarrow\mathbb{N}$ be two maps such that $r_{L,\tot}=r_{R,\tot}=N$. Denote $G:=GL(N,\field)$.
\begin{enumerate}[wide,labelwidth=!,labelindent=0pt]
\item
Suppose $\vec{r}_{\Left}=\vec{r}_{\Right}=:\vec{r}$ defines a color on $\id_n\in\mathrm{Br}_n^+$. For any $1\leq k\leq n$ and any $\epsilon\in GL(\vec{r}(k),\field)$, define
\[
\bfK_k^{\vec{r}}(\epsilon):=\text{\tiny$\sum_{i\neq k}$} E_{i,i}(I_{\vec{r}(i)}) + E_{k,k}(\epsilon) \in G;\quad [\bfK_k^{\vec{r}}(\epsilon)]:= \text{ Figure \ref{fig:elementary_braid_matrix_diagrams} (left)}.
\]
If $\vec{r}=1$, simply denote $\mK_k(\epsilon):=\bfK_k^{\vec{r}}(\epsilon)$, etc.

\item
Suppose $\vec{r}_{\Left}=\vec{r}_{\Right}=:\vec{r}$ defines a color on $\id_n\in\mathrm{Br}_n^+$. 
For any $1\leq i< j\leq n$ and any $\epsilon\in M_{\vec{r}(i)\times\vec{r}(j)}(\field)$, define
\[
\bfH_{i,j}^{\vec{r}}(\epsilon):=I_N + E_{i,j}(\epsilon) \in G;\quad [\bfH_{i,j}^{\vec{r}}(\epsilon)]:= \text{ Figure \ref{fig:elementary_braid_matrix_diagrams} (middle)}.
\]
Denote $\bfH_k^{\vec{r}}(\epsilon):=\bfH_{k,k+1}^{\vec{r}}(\epsilon)$, etc.

\item
Suppose $\vec{r}_{\Left}=\vec{r}_{\Right}\circ s_k$ for some $1\leq k\leq n-1$, so $\vec{r}:=(\vec{r}_{\Right},\vec{r}_{\Left})$ defines a color on $\sigma_k\in\mathrm{Br}_n^+$.
Define 
\[
\bfs_k^{\vec{r}}:=\text{\tiny$\sum_{i\neq k,k+1}$} E_{i,i}(I_{\vec{r}(i)}) + E_{k+1,k}^{\vec{r}_{\Left}}(I_{\vec{r}_{\Left}(k)}) + E_{k,k+1}^{\vec{r}_{\Left}}(I_{\vec{r}_{\Left}(k+1)})\in G;
\quad [\bfs_k^{\vec{r}}]=[s_k^{\vec{r}}]:=\sigma_k^{\vec{r}}~(\text{Figure \ref{fig:elementary_braid_matrix_diagrams} (right)}).
\]
Define a morphism respecting admissible compositions
\[
\overallpermutation:S_{n,N}\rightarrow S_N\subset G: s_k^{\vec{r}}=(s_k,\vec{r})\mapsto \bfs_k^{\vec{r}}~~(\text{\emph{overall permutation}});
~~\rightsquigarrow~~ \bfs:=\overallpermutation\circ\permutation:\Br_{n,N}^+\rightarrow G.
\]
\end{enumerate}
Each of $[\bfK_k^{\vec{r}}(\epsilon)]$, $[\bfH_{i,j}^{\vec{r}}(\epsilon)]$, $[\bfs_k^{\vec{r}}]$ is called a \emph{(block) elementary braid matrix diagram} of rank $(n,N)$ over $\field$.
\end{definition}

\begin{figure}[!htbp]
\vspace{-0.2in}
\begin{center}
\begin{tikzcd}[row sep=0.5pc,column sep=1pc,ampersand replacement=\&]
\begin{tikzpicture}[baseline=-.5ex,scale=0.5]
\begin{scope}

\draw[thin] (0,0)--(4,0);
\draw[thin] (0,2.5)--(4,2.5);
\draw[thin] (0,5)--(4,5);
\draw (2,1.25) node[right] {{\tiny$\vdots$}};
\draw (2,3.75) node[right] {{\tiny$\vdots$}};

\draw (0,0) node[left] {{\tiny$1$}};
\draw (0,1.25) node[left] {{\tiny$\vdots$}};
\draw (0,2.5) node[left] {{\tiny$k$}};
\draw (0,3.75) node[left] {{\tiny$\vdots$}};
\draw (0,5) node[left] {{\tiny$n$}};

\draw (4,0) node[right] {{\tiny$1$}};
\draw (4,1.25) node[right] {{\tiny$\vdots$}};
\draw (4,2.5) node[right] {{\tiny$k$}};
\draw (4,3.75) node[right] {{\tiny$\vdots$}};
\draw (4,5) node[right] {{\tiny$n$}};

\draw (0.5,5) node[above] {{\tiny$\vec{r}$}};

\draw (1.5,3) node[below] {$\ast$};
\draw (1.5,2.5) node[above] {$\epsilon$};

\draw (2,-0.5) node[below] {{\tiny$[\bfK_k^{\vec{r}}(\epsilon)]$}};

\end{scope}
\end{tikzpicture}
\&
\begin{tikzpicture}[baseline=-.5ex,scale=0.5]
\begin{scope}

\draw[thin] (0,0)--(4,0);
\draw[thin] (0,1.5)--(4,1.5);
\draw[thin] (0,3.5)--(4,3.5);
\draw[thin] (0,5)--(4,5);
\draw (2,0.75) node[right] {{\tiny$\vdots$}};
\draw (2,2.5) node[right] {{\tiny$\vdots$}};
\draw (2,4.25) node[right] {{\tiny$\vdots$}};

\draw (0,0) node[left] {{\tiny$1$}};
\draw (0,0.75) node[left] {{\tiny$\vdots$}};
\draw (0,1.5) node[left] {{\tiny$i$}};
\draw (0,2.5) node[left] {{\tiny$\vdots$}};
\draw (0,3.5) node[left] {{\tiny$j$}};
\draw (0,4.25) node[left] {{\tiny$\vdots$}};
\draw (0,5) node[left] {{\tiny$n$}};

\draw (4,0) node[right] {{\tiny$1$}};
\draw (4,0.75) node[right] {{\tiny$\vdots$}};
\draw (4,1.5) node[right] {{\tiny$i$}};
\draw (4,2.5) node[right] {{\tiny$\vdots$}};
\draw (4,3.5) node[right] {{\tiny$j$}};
\draw (4,4.25) node[right] {{\tiny$\vdots$}};
\draw (4,5) node[right] {{\tiny$n$}};

\draw (0.5,5) node[above] {{\tiny$\vec{r}$}};

\draw[line width=0.5mm] (1.5,1.5)--(1.5,3.5);
\draw (1.5,2.5) node[left] {$\epsilon$};

\draw (2,-0.5) node[below] {{\tiny$[\bfH_{i,j}^{\vec{r}}(\epsilon)]$}};

\end{scope}
\end{tikzpicture}
\&
\begin{tikzpicture}[baseline=-.5ex,scale=0.5]
\begin{scope}

\draw[thin] (0,0)--(4,0);
\draw[thin] (0,2)--(1,2) to [out=0,in=180] (3,3)--(4,3);
\draw[white,fill=white] (2,2.5) circle (0.2);
\draw[thin] (0,3)--(1,3) to[out=0,in=180] (3,2)--(4,2);
\draw[thin] (0,5)--(4,5);
\draw (1.6,1) node[right] {{\tiny$\vdots$}};
\draw (1.6,4) node[right] {{\tiny$\vdots$}};

\draw (0,0) node[left] {{\tiny$1$}};
\draw (0,1) node[left] {{\tiny$\vdots$}};
\draw (0,2) node[left] {{\tiny$k$}};
\draw (0,3) node[left] {{\tiny$k+1$}};
\draw (0,4) node[left] {{\tiny$\vdots$}};
\draw (0,5) node[left] {{\tiny$n$}};

\draw (4,0) node[right] {{\tiny$1$}};
\draw (4,1) node[right] {{\tiny$\vdots$}};
\draw (4,2) node[right] {{\tiny$k$}};
\draw (4,3) node[right] {{\tiny$k+1$}};
\draw (4,4) node[right] {{\tiny$\vdots$}};
\draw (4,5) node[right] {{\tiny$n$}};

\draw (0.5,5) node[above] {{\tiny$\vec{r}_{\Left}$}};
\draw (3.5,5) node[above] {{\tiny$\vec{r}_{\Right}$}};

\draw (2,-0.5) node[below] {{\tiny$[\bfs_k^{\vec{r}}]=\sigma_k^{\vec{r}}$}};

\end{scope}
\end{tikzpicture}

\end{tikzcd}
\end{center}
\vspace{-0.2in}
\caption{Block elementary matrix diagrams representing block elementary matrices: $[\bfK_k^{\vec{r}}(\epsilon)]$ (\emph{block scaling}), $[\bfH_{i,j}^{\vec{r}}(\epsilon)]$ for $i<j$ (\emph{block handleslide}), $[\bfs_k^{\vec{r}}]$ (\emph{block transposition}).} 
\label{fig:elementary_braid_matrix_diagrams}
\end{figure}

\begin{definition}[(Block) braid matrix diagram presentations]~\label{def:braid_matrix_diagram_presentations}
\begin{enumerate}[wide,labelwidth=!,labelindent=0pt]
\item
A \emph{braid matrix diagram presentation} ($\bmdp$) of rank $(n,N)$ over $\field$ is a finite concatenation of elementary braid matrix diagrams of rank $(n,N)$, 
where the concatenation is compatible with the coloring.
Define $\FBD_{n,N}$ to be the set of all $\bmdp$'s of rank $(n,N)$, modulo the relation generated by $[\mK_k^{\vec{r}}(I_{\vec{r}(k)})]=\id_n^{\vec{r}}=[\mH_{i,j}^{\vec{r}}(0)]$ under compatible concatenations.
If $[\Gamma_1|\Gamma_2]$ is a compatible concatenation in $\FBD_{n,N}$, define $\Gamma_2\circ\Gamma_1:=[\Gamma_1|\Gamma_2]$, called \emph{admissible composition}.

\item
Define a morphism respecting (admissible) compositions by 
\[
g_{-}:\FBD_{n,N}\rightarrow G: \sigma_k^{\vec{r}}\mapsto \bfs_k^{\vec{r}},~~[\bfK_k^{\vec{r}}(\epsilon)]\mapsto \bfK_k^{\vec{r}}(\epsilon),~~[\bfH_{i,j}^{\vec{r}}(\epsilon')]\mapsto \bfH_{i,j}^{\vec{r}}(\epsilon').
\]
Two $\bmdp$'s $\Gamma_1,\Gamma_2\in\FBD_{n,N}$ are \emph{weakly equivalent} if $g_{\Gamma_1}=g_{\Gamma_2}$, denoted as $\Gamma_1\weakequivalent \Gamma_2$.
\end{enumerate}
\end{definition}

The following lemma is straightforward:
\begin{lemma}[\emph{Elementary moves} of $\bmdp$'s]~\label{lem:elementary_moves}
For $s_k\in S_n$, denote $i':=\ms_k(i)$, then
\begingroup
\setlength{\tabcolsep}{4pt} 
\renewcommand{\arraystretch}{1.2} 

\begin{tabular}{ll}
\minipage{0.42\linewidth}
\[
\bfK_k^{\vec{r}}(\epsilon_1)\text{\tiny$\circ$}\bfK_{\ell}^{\vec{r}}(\epsilon_2)=\left\{\begin{array}{ll}
\bfK_k^{\vec{r}}(\epsilon_1\epsilon_2) & \text{{\small$k=\ell$}},\\
\bfK_{\ell}^{\vec{r}}(\epsilon_2)\text{\tiny$\circ$} \bfK_k^{\vec{r}}(\epsilon_1) & \text{{\small$k\neq\ell$}}.
\end{array}\right.
\]
\endminipage
&
\minipage{0.53\linewidth}
\[
\bfH_{i,j}^{\vec{r}}(\epsilon_1)\text{\tiny$\circ$}\bfK_k^{\vec{r}}(\epsilon_2)=\left\{\begin{array}{ll}
\bfK_i^{\vec{r}}(\epsilon_2)\text{\tiny$\circ$}\bfH_{i,j}^{\vec{r}}(\epsilon_2^{-1}\epsilon_1) & \text{$k=i$},\\
\bfK_i^{\vec{r}}(\epsilon_2)\text{\tiny$\circ$}\bfH_{i,j}^{\vec{r}}(\epsilon_1\epsilon_2) & \text{$k=j$},\\
\bfK_i^{\vec{r}}(\epsilon_2)\text{\tiny$\circ$}\bfH_{i,j}^{\vec{r}}(\epsilon_1) & \text{$k\neq i,j$}.
\end{array}\right.
\]
\endminipage
\end{tabular}
\endgroup

\begingroup
\setlength{\tabcolsep}{4pt} 
\renewcommand{\arraystretch}{1.2} 
\begin{tabular}{ll}
\minipage{0.27\textwidth}
\[
\left.\begin{array}{l}
\bfs_k^{\vec{r}}\text{\tiny$\circ$}\bfK_i^{\vec{r}_{\Left}}(\epsilon)=\bfK_{i'}^{\vec{r}_{\Right}}(\epsilon)\text{\tiny$\circ$}\bfs_k^{\vec{r}};\\
\bfs_k^{\vec{r}}\text{\tiny$\circ$}\bfH_{i,j}^{\vec{r}_{\Left}}(\epsilon)=\bfH_{i',j'}^{\vec{r}_{\Right}}(\epsilon)\text{\tiny$\circ$}\bfs_k^{\vec{r}}.
\end{array}\right.
\]
\endminipage\hfill
&
\minipage{0.68\textwidth}
\[
\bfH_{i,j}^{\vec{r}}(\epsilon_1)\text{\tiny$\circ$}\bfH_{k,\ell}^{\vec{r}}(\epsilon_2)=\left\{\begin{array}{ll}
\bfH_{i,j}^{\vec{r}}(\epsilon_1\text{\tiny$+$}\epsilon_2) & \text{$(k,\ell)=(i,j)$},\\
\bfH_{j,\ell}^{\vec{r}}(\epsilon_2)\text{\tiny$\circ$}\bfH_{i,\ell}^{\vec{r}}(\epsilon_1\epsilon_2)\text{\tiny$\circ$}\bfH_{i,j}^{\vec{r}}(\epsilon_1) & \text{$k=j$},\\
\bfH_{k,i}^{\vec{r}}(\epsilon_2)\text{\tiny$\circ$}\bfH_{k,j}^{\vec{r}}(\text{\tiny$-$}\epsilon_2\epsilon_1)\text{\tiny$\circ$}\bfH_{i,j}^{\vec{r}}(\epsilon_1) & \text{$\ell=i$},\\
\bfH_{k,\ell}^{\vec{r}}(\epsilon_2)\text{\tiny$\circ$}\bfH_{i,j}^{\vec{r}}(\epsilon_1) & \text{else}.
\end{array}\right.
\]
\endminipage\hfill
\end{tabular}
\endgroup

\noindent{}Each identity is either trivial (commutative), or represented by an (\emph{elementary}) move in Figure \ref{fig:elementary_moves}.
\end{lemma}

\begin{figure}[!htbp]
\vspace{-0.1in}
\begin{center}
\begin{tikzcd}[row sep=0.5pc,column sep=0.5pc,ampersand replacement=\&]

\begin{tikzpicture}[baseline=-.5ex,scale=0.5]
\begin{scope}

\draw[thin] (0,0)--(1.5,0);
\draw[thin] (0,1)--(1.5,1);
\draw[thin] (0,2)--(1.5,2);

\draw (0.5,1.5) node[below] {$\ast$};
\draw (0.5,1) node[below] {\text{\tiny$\epsilon_2$}};

\draw (1,1.5) node[below] {$\ast$};
\draw (1,1) node[above] {\text{\tiny$\epsilon_1$}};

\end{scope}
\end{tikzpicture}

\arrow[r,leftrightarrow,line width=0.1mm,shift left=4,"{(1)}"]\&

\begin{tikzpicture}[baseline=-.5ex,scale=0.5]
\begin{scope}

\draw[thin] (0,0)--(1.5,0);
\draw[thin] (0,1)--(1.5,1);
\draw[thin] (0,2)--(1.5,2);

\draw (0.75,1.5) node[below] {$\ast$};
\draw (0.75,1) node[below] {\text{\tiny$\epsilon_1\epsilon_2$}};

\end{scope}
\end{tikzpicture}

\&

\begin{tikzpicture}[baseline=-.5ex,scale=0.5]
\begin{scope}

\draw[thin] (0,0)--(1.5,0);
\draw[thin] (0,2)--(1.5,2);

\draw (0.5,0.5) node[below] {$\ast$};
\draw (0.5,0) node[below] {\text{\tiny$\epsilon_2$}};

\draw[line width=0.5mm] (1,0)--(1,2);
\draw (0.8,1.2) node[right] {\text{\tiny$\epsilon_1$}};

\end{scope}
\end{tikzpicture}

\arrow[r,leftrightarrow,line width=0.1mm,shift left=4,"{(2)}"]\&

\begin{tikzpicture}[baseline=-.5ex,scale=0.5]
\begin{scope}

\draw[thin] (0,0)--(1.5,0);
\draw[thin] (0,2)--(1.5,2);

\draw (1,0.5) node[below] {$\ast$};
\draw (1,0) node[below] {\text{\tiny$\epsilon_2$}};

\draw[line width=0.5mm] (0.5,0)--(0.5,2);
\draw (0.3,1.2) node[right] {\text{\tiny$\epsilon_2^{-1}\epsilon_1$}};

\end{scope}
\end{tikzpicture}

\&

\begin{tikzpicture}[baseline=-.5ex,scale=0.5]
\begin{scope}

\draw[thin] (0,0)--(1.5,0);
\draw[thin] (0,2)--(1.5,2);

\draw (0.5,2.5) node[below] {$\ast$}; 
\draw (0.7,2) node[above] {\text{\tiny$\epsilon_2$}};

\draw[line width=0.5mm] (1,0)--(1,2);
\draw (0.8,0.8) node[right] {\text{\tiny$\epsilon_1$}};

\end{scope}
\end{tikzpicture}

\arrow[r,leftrightarrow,line width=0.1mm,shift left=4,"{(3)}"]\&

\begin{tikzpicture}[baseline=-.5ex,scale=0.5]
\begin{scope}

\draw[thin] (0,0)--(1.5,0);
\draw[thin] (0,2)--(1.5,2);

\draw (1,2.5) node[below] {$\ast$}; 
\draw (1,2) node[above] {\text{\tiny$\epsilon_2$}};

\draw[line width=0.5mm] (0.5,0)--(0.5,2);
\draw (0.3,0.8) node[right] {\text{\tiny$\epsilon_1\epsilon_2$}};

\end{scope}
\end{tikzpicture}

\&

\begin{tikzpicture}[baseline=-.5ex,scale=0.5]
\begin{scope}

\draw[thin] (0,0.5)--(0.5,0.5) to [out=0,in=180] (2.5,1.5);
\draw[white,fill=white] (1.5,1) circle (0.2);
\draw[thin] (0,1.5)--(0.5,1.5) to[out=0,in=180] (2.5,0.5);

\draw (0.5,1) node[below] {$\ast$}; 
\draw (0.5,0.5) node[below] {\text{\tiny$\epsilon$}};

\end{scope}
\end{tikzpicture}

\arrow[r,leftrightarrow,line width=0.1mm,shift left=4,"{(4)}"]\&

\begin{tikzpicture}[baseline=-.5ex,scale=0.5]
\begin{scope}

\draw[thin] (0,0.5) to [out=0,in=180] (2,1.5)--(2.5,1.5);
\draw[white,fill=white] (1,1) circle (0.2);
\draw[thin] (0,1.5) to [out=0,in=180] (2,0.5)--(2.5,0.5);

\draw (2,2) node[below] {$\ast$}; 
\draw (2,1.5) node[above] {\text{\tiny$\epsilon$}};

\end{scope}
\end{tikzpicture}

\end{tikzcd}
\end{center}

\begin{center}
\begin{tikzcd}[row sep=0.5pc,column sep=0.5pc,ampersand replacement=\&]

\begin{tikzpicture}[baseline=-.5ex,scale=0.5]
\begin{scope}

\draw[thin] (0,0)--(1.5,0);
\draw[thin] (0,2)--(1.5,2);

\draw[line width=0.5mm] (0.5,0)--(0.5,2);
\draw (0.8,1) node[left] {\text{\tiny$\epsilon_2$}};

\draw[line width=0.5mm] (1,0)--(1,2);
\draw (0.7,1) node[right] {\text{\tiny$\epsilon_1$}};

\end{scope}
\end{tikzpicture}

\arrow[r,leftrightarrow,line width=0.1mm,shift left=4,"{(5)}"]\&

\begin{tikzpicture}[baseline=-.5ex,scale=0.5]
\begin{scope}

\draw[thin] (0,0)--(1.5,0);
\draw[thin] (0,2)--(1.5,2);

\draw[line width=0.5mm] (0.5,0)--(0.5,2);
\draw (0.2,1) node[right] {\text{\tiny$\epsilon_1+\epsilon_2$}};

\end{scope}
\end{tikzpicture}

\&

\begin{tikzpicture}[baseline=-.5ex,scale=0.5]
\begin{scope}

\draw[thin] (0,0)--(1.5,0);
\draw[thin] (0,1)--(1.5,1);
\draw[thin] (0,2)--(1.5,2);

\draw[line width=0.5mm] (1,0)--(1,1);
\draw (0.7,0.5) node[right] {\text{\tiny$\epsilon_1$}};

\draw[line width=0.5mm] (0.5,1)--(0.5,2);
\draw (0.8,1.5) node[left] {\text{\tiny$\epsilon_2$}};

\end{scope}
\end{tikzpicture}

\arrow[r,leftrightarrow,line width=0.1mm,shift left=4,"{(6)}"]\&

\begin{tikzpicture}[baseline=-.5ex,scale=0.5]
\begin{scope}

\draw[thin] (0,0)--(2,0);
\draw[thin] (0,1)--(2,1);
\draw[thin] (0,2)--(2,2);

\draw[line width=0.5mm] (0.5,0)--(0.5,1);
\draw (0.8,0.5) node[left] {\text{\tiny$\epsilon_1$}};

\draw[line width=0.5mm] (1.5,1)--(1.5,2);
\draw (1.2,1.5) node[right] {\text{\tiny$\epsilon_2$}};

\draw[line width=0.5mm] (1,0)--(1,2);
\draw (0.7,0.5) node[right] {\text{\tiny$\epsilon_1\epsilon_2$}};

\end{scope}
\end{tikzpicture}

\&

\begin{tikzpicture}[baseline=-.5ex,scale=0.5]
\begin{scope}

\draw[thin] (0,0)--(1.5,0);
\draw[thin] (0,1)--(1.5,1);
\draw[thin] (0,2)--(1.5,2);

\draw[line width=0.5mm] (1,1)--(1,2);
\draw (0.7,1.5) node[right] {\text{\tiny$\epsilon_1$}};

\draw[line width=0.5mm] (0.5,0)--(0.5,1);
\draw (0.8,0.5) node[left] {\text{\tiny$\epsilon_2$}};

\end{scope}
\end{tikzpicture}

\arrow[r,leftrightarrow,line width=0.1mm,shift left=4,"{(7)}"]\&

\begin{tikzpicture}[baseline=-.5ex,scale=0.5]
\begin{scope}

\draw[thin] (0,0)--(2,0);
\draw[thin] (0,1)--(2,1);
\draw[thin] (0,2)--(2,2);

\draw[line width=0.5mm] (0.5,1)--(0.5,2);
\draw (0.8,1.5) node[left] {\text{\tiny$\epsilon_1$}};

\draw[line width=0.5mm] (1.5,0)--(1.5,1);
\draw (1.2,0.5) node[right] {\text{\tiny$\epsilon_2$}};

\draw[line width=0.5mm] (1,0)--(1,2);
\draw (0.7,1.5) node[right] {\text{\tiny$-\epsilon_2\epsilon_1$}};

\end{scope}
\end{tikzpicture}

\&

\begin{tikzpicture}[baseline=-.5ex,scale=0.5]
\begin{scope}

\draw[thin] (0.3,0)--(0.5,0) to [out=0,in=180] (2.5,1);
\draw[white,fill=white] (1.5,0.5) circle (0.2);
\draw[thin] (0.3,1)--(0.5,1) to[out=0,in=180] (2.5,0);
\draw[thin] (0.3,2)--(2.5,2);

\draw[line width=0.5mm] (0.5,1)--(0.5,2);
\draw (0.2,1.5) node[right] {\text{\tiny$\epsilon$}};

\end{scope}
\end{tikzpicture}

\arrow[r,leftrightarrow,line width=0.1mm,shift left=4,"{(8)}"]\&

\begin{tikzpicture}[baseline=-.5ex,scale=0.5]
\begin{scope}

\draw[thin] (0,0) to [out=0,in=180] (2,1)--(2.2,1);
\draw[white,fill=white] (1,0.5) circle (0.2);
\draw[thin] (0,1) to [out=0,in=180] (2,0)--(2.2,0);
\draw[thin] (0,2)--(2.2,2);

\draw[line width=0.5mm] (2,0)--(2,2);
\draw (1.7,1.5) node[right] {\text{\tiny$\epsilon$}};

\end{scope}
\end{tikzpicture}

\end{tikzcd}
\end{center}
\vspace{-0.1in}
\caption{\emph{Elementary moves} for $\bmdp$'s: The trivial ones are skipped.
Each move is a weak equivalence in $\FBD_{n,N}$ representing an (admissible) composition identity in Lemma \ref{lem:elementary_moves}, and vice versa. The coloring is omitted.
The composition goes from left to right: $\Gamma_2\circ\Gamma_1=[\Gamma_1|\Gamma_2]$.}
\label{fig:elementary_moves}
\end{figure}
\vspace{-0.1in}

\begin{definition}[(Block) braid matrix diagrams]\label{def:braid_matrix_diagrams}~
\begin{enumerate}[wide,labelwidth=!,labelindent=0pt]
\item
Let $\underline{\FBD}_{n,N}$ be the quotient of $\FBD_n^N$ by elementary moves. 
Then $\exists$ a morphism respecting admissible compositions: 
\[
\beta_{-}:\underline{\FBD}_{n,N}\rightarrow \FBr_{n,N}^+: \sigma_k^{\vec{r}}\mapsto \sigma_k^{\vec{r}},~~[\mK_k^{\vec{r}}(\epsilon)]\mapsto \id_n^{\vec{r}},~~[\mH_{i,j}^{\vec{r}}(\epsilon')]\mapsto \id_n^{\vec{r}}.
\]
\vspace{-0.1in} 

\item
The set $\BD_{n,N}$ of \emph{braid matrix diagrams} ($\bmd$) of rank $(n,N)$ is the quotient of $\underline{\FBD}_{n,N}$ by \emph{(colored) braid relations/moves}.
So, $\BD_{n,N}=\FBD_{n,N}/\braidequivalent$, with $\braidequivalent$ generated by elementary and braid moves.

\item
$\beta_{-}:\underline{\FBD}_{n,N}\rightarrow \FBr_{n,N}^+$ and $g_{-}:\FBD_{n,N}\rightarrow G$ induce morphisms respecting admissible compositions:
\[
\beta_{-}:\BD_{n,N}\rightarrow \Br_{n,N}^+,\quad g_{-}:\BD_{n,N}\rightarrow G.
\]
\end{enumerate}
\end{definition}

\begin{remark}\label{rem:positive_braids_vs_braid_matrix_diagrams}
We have natural morphisms (respecting admissible compositions)
\[
i:\FBr_{n,N}^+\rightarrow \underline{\FBD}_{n,N}: \sigma_k^{\vec{r}}\mapsto \sigma_k^{\vec{r}},\quad\rightsquigarrow\quad i:\Br_{n,N}^+\rightarrow \BD_{n,N}: \sigma_k^{\vec{r}}\mapsto \sigma_k^{\vec{r}}.
\]
$\beta_{-}\circ i=\id$, so $i$ induces embeddings $\FBr_{n,N}^+\hookrightarrow \underline{\FBD}_{n,N}$ and $\Br_{n,N}^+\hookrightarrow \BD_{n,N}$. This morally explains the terminology.
Altogether, we get a commutative diagram respecting admissible compositions:
\begin{equation}
\begin{tikzcd}[row sep=2pc,column sep=2pc]
&\FBr_{n,N}^+\arrow[d,hookrightarrow,swap,"{i}"]\arrow[dl,hookrightarrow]\arrow[r,twoheadrightarrow] & \Br_{n,N}^+\arrow[d,hookrightarrow,swap,"{i}"]\arrow[r,twoheadrightarrow,"{\permutation}"]\arrow[dr,"{\bfs}"] & S_{n,N}\arrow[d,"{\overallpermutation}"]\\
\FBD_{n,N}\arrow[r,twoheadrightarrow] & \underline{\FBD}_{n,N}\arrow[d,twoheadrightarrow,"{\beta_{-}}"]\arrow[r,twoheadrightarrow] & \BD_{n,N}\arrow[d,twoheadrightarrow,"{\beta_{-}}"]\arrow[r,twoheadrightarrow,"{g_{-}}"] & G\\
&\FBr_{n,N}^+\arrow[r,twoheadrightarrow] & \Br_{n,N}^+ & \\
\end{tikzcd}
\vspace{-0.2in}
\end{equation}
\end{remark}

Next, $\permutation:\Br_{n,N}^+\rightarrow S_{n,N}$ admits a \emph{canonical section} (as a map of sets) characterized by:
\begin{equation}\label{eqn:positive_braid_lifting_a_permutation}
[-]:S_{n,N}\rightarrow \Br_{n,N}^+\subset\BD_{n,N}: w^{\vec{r}}\mapsto [w^{\vec{r}}], \text{ with } \permutation([w^{\vec{r}}])=w^{\vec{r}}, \ell([w^{\vec{r}}])=\ell(w^{\vec{r}}).
\end{equation}
Here, $\ell(-)$ stands for length.
As we have seen, $[s_k^{\vec{r}}]=\sigma_k^{\vec{r}}$. Up to a choice, we may assume $[w^{\vec{r}}]\in\FBr_{n,N}^+$. By definition,
\begin{equation}\label{eqn:composition_of_positive_braids_lifting_permutations}
[w_1^{\vec{r}_1}]\circ[w_2^{\vec{r}_2}]=[w_1^{\vec{r}_1}w_2^{\vec{r}_2}]\in\Br_{n,N}^+ \Leftrightarrow \ell(w_1^{\vec{r}_1}w_2^{\vec{r}_2})=\ell(w_1^{\vec{r}_1})+\ell(w_2^{\vec{r}_2}).
\end{equation}
Also, for any color $\vec{r}:[n]\rightarrow\mathbb{N}$ of total rank $N$, let $P_{\vec{r}}\subset G$ denote the parabolic subgroup of $(\vec{r}(1),\text{\tiny$\cdots$},\vec{r}(n))$-block upper-triangular matrices, 
then $\exists$ a canonical \emph{morphism} respecting admissible compositions, extending Definition \ref{def:elementary_braid_matrix_diagrams}:
\begin{equation}\label{eqn:braid_matrix_diagrams_lifting_Borel_elements}
[-]=[-]^{\vec{r}}:P_{\vec{r}} \rightarrow \underline{\FBD}_{n,N}: b\mapsto [b]^{\vec{r}} \text{ with } g_{[b]^{\vec{r}}}=b.
\end{equation}

\subsection{Betti moduli stacks associated to positive braids}\label{subsec:Betti_moduli_stacks_associated_to_positive_braids}

Let $\beta^{\vec{r}}\in\FBr_{n,N}^+$ be a colored positive braid presentation of rank $(n,N)$. We may write
\[
\beta^{\vec{r}} = \sigma_{i_{\ell}}^{\vec{r}_{\ell}}\text{\tiny$\cdots$}\sigma_{i_1}^{\vec{r}_1}.
\]
That is, $\vec{r}_k$ is a color on $\sigma_{i_k}\in\FBr_n^+$ of total rank $N$, and $\vec{r}_{k,\Right}=\vec{r}_{k+1,\Left}$. Moreover, $\ell$ is the length of $\beta^{\vec{r}}$.

As in \cite[\S 6.1]{STZ17}, we may view $\beta$ as a Legendrian submanifold in $J^1U=T^{\infty,-}(U\times\mathbb{R}_z)$ for some interval $U=(x_{\Left},x_{\Right})$.
As in the beginning of Section \ref{sec:Betti_moduli_spaces_and_braids}, replacing $S_x^1$ by $U$, we may define a diagram of algebraic stacks
\[
\begin{tikzcd}
\Mloc_{N}(U\times\{z_0\}) &
\modulistack_{\vec{r}}(\beta):=\modulistack_{\vec{r}}(U\times\mathbb{R}_z,\{q_m\},\beta)\arrow[r,"{\mumon}"]\arrow[l,swap,"{\mon}"] 
& \Mloc_{\vec{r}}(\beta)
\end{tikzcd}
\]
For any fixed $0<\delta\ll1$, denote $U_{\Left}:=(x_{\Left},x_{\Left}+\delta), U_{\Right}:=(x_{\Right}-\delta,x_{\Right})$. Let $\beta_{\Left}^{\vec{r}_{\Left}}$ (resp. $\beta_{\Right}^{\vec{r}_{\Right}}$) denote the restriction of $\beta^{\vec{r}}$ over $U_{\Left}$ (resp. $U_{\Right}$). In particular, $\beta_{\Left}=\beta_{\Right}=\id_n\in\FBr_n^+$. By restriction, we obtain a commutative diagram of algebraic stacks
\begin{equation}\label{eqn:local_diagram_for_Betti_moduli_stacks}
\begin{tikzcd}[row sep=2pc,column sep=2pc]
\Mloc_{N}(U_{\Left}\times\{z_0\}) &
\Mloc_{N}(U\times\{z_0\})\arrow[r,"{\isomorphic}"]\arrow[l,swap,"{\isomorphic}"] 
& \Mloc_{N}(U_{\Right}\times\{z_0\})\\
\modulistack_{\vec{r}_{\Left}}(\beta_{\Left})\arrow[u,swap,"{\mon}"]\arrow[d,"{\mumon}"] &
\modulistack_{\vec{r}}(\beta)\arrow[u,swap,"{\mon}"]\arrow[d,"{\mumon}"]\arrow[l,swap,"{\fr_{\Left}}"]\arrow[r,"{\fr_{\Right}}"] 
& \modulistack_{\vec{r}_{\Right}}(\beta_{\Right})\arrow[u,swap,"{\mon}"]\arrow[d,"{\mumon}"]\\
\Mloc_{\vec{r}_{\Left}}(\beta_{\Left}) &
 \Mloc_{\vec{r}}(\beta)\arrow[r,"{\fr_{\Right}}"]\arrow[l,swap,"{\fr_{\Left}}"] 
& \Mloc_{\vec{r}_{\Right}}(\beta_{\Right})
\end{tikzcd}
\end{equation}
We would like to give a concrete description of this diagram. To do that, we make some definitions.

\begin{definition}\label{def:braid_matrices}
The \emph{(colored/block) braid matrix (resp. $\bmdp$) with coefficient $\epsilon\in M_{\vec{r}_{\Left}(k)\times\vec{r}_{\Left}(k+1)}(\field)$} associated to $\sigma_k^{\vec{r}}\in\FBr_{n,N}^+$ is
\begin{equation}
\bfB_k^{\vec{r}}(\epsilon):=\bfs_k^{\vec{r}}\bfH_k^{\vec{r}_{\Left}}(\epsilon)\in G;
\quad [\bfB_k^{\vec{r}}(\epsilon)]=\sigma_k^{\vec{r}}\circ[\bfH_k^{\vec{r}_{\Left}}(\epsilon)]\in\FBD_{n,N}~~(\text{Figure \ref{fig:braid_matrix_diagram_with_coeffficient_for_a_single_crossing}}).
\end{equation}
For $\beta^{\vec{r}}=\sigma_{i_{\ell}}^{\vec{r}_{\ell}}\cdots\sigma_{ki_1}^{\vec{r}_1}\in\FBr_{n,N}^+$, 
and $\vec{\epsilon}=(\epsilon_i)_{i={\ell}}^1\in\text{\tiny$\prod_{k=\ell}^1$}M_{\vec{r}_{k,\Left}(i_k)\times\vec{r}_{k,\Left}(i_k+1)}(\field)$, define
\begin{equation}
\bfB_{\beta}^{\vec{r}}(\vec{\epsilon}):=\bfB_{i_{\ell}}^{\vec{r}_{\ell}}(\epsilon_{\ell}) \text{\tiny$\cdots$} \bfB_{i_1}^{\vec{r}_1}(\epsilon_1) \in G;\quad
[\bfB_{\beta}^{\vec{r}}(\vec{\epsilon})]':=[\bfB_{i_{\ell}}^{\vec{r}_{\ell}}(\epsilon_{\ell})]\circ \text{\tiny$\cdots$} \circ [\bfB_{i_1}^{\vec{r}_1}(\epsilon_1)]\in\FBD_{n,N}.
\end{equation}
If $\vec{r}=1$ (uncolored case), simply denote $\mB_{\beta}(\vec{\epsilon}):=\bfB_{\beta}^{\vec{r}}(\vec{\epsilon})$.
\end{definition}

\begin{figure}[!htbp]
\vspace{-0.1in}
\begin{center}
\begin{tikzpicture}[baseline=-.5ex,scale=0.3]
\begin{scope}
\draw[thin] (0,0)--(4,0);
\draw[thin] (0,2)--(1,2) to [out=0,in=180] (3,3)--(4,3);
\draw[white,fill=white] (2,2.5) circle (0.2);
\draw[thin] (0,3)--(1,3) to[out=0,in=180] (3,2)--(4,2);
\draw[thin] (0,5)--(4,5);

\draw[line width=0.5mm] (1,2)--(1,3);
\draw (1.2,2.5) node[left] {\text{\tiny$\epsilon$}};

\draw (3.2,1.3) node[left] {\text{\tiny$\vdots$}};
\draw (3.2,4.3) node[left] {\text{\tiny$\vdots$}};

\draw (0,0) node[left] {\text{\tiny$1$}};
\draw (0,2) node[left] {\text{\tiny$k$}};
\draw (0,3) node[left] {\text{\tiny$k+1$}};
\draw (0,5) node[left] {\text{\tiny$n$}};

\draw (0.5,5) node[above] {\text{\tiny$\vec{r}_{\Left}$}};

\draw (4,0) node[right] {\text{\tiny$1$}};
\draw (4,2) node[right] {\text{\tiny$k$}};
\draw (4,3) node[right] {\text{\tiny$k+1$}};
\draw (4,5) node[right] {\text{\tiny$n$}};

\draw (3.5,5) node[above] {\text{\tiny$\vec{r}_{\Right}$}};

\draw (2,-0.5) node[below] {\text{\tiny$[\bfB_k^{\vec{r}}(\epsilon)]$}};

\end{scope}
\end{tikzpicture}
\vspace{-0.1in}
\end{center}
\caption{The (colored/block) braid matrix diagram with coefficient $\epsilon\in M_{\vec{r}_{\Left}(k)\times\vec{r}_{\Left}(k+1)}(\field)$ associated to $\sigma_k^{\vec{r}}$.}
\label{fig:braid_matrix_diagram_with_coeffficient_for_a_single_crossing}
\end{figure}
\vspace{-0.1in}

\begin{definition/proposition}
Given $\beta^{\vec{r}}=\sigma_{i_{\ell}}^{\vec{r}_{\ell}}\cdots\sigma_{i_1}^{\vec{r}_1}\in\FBr_{n,N}^+$ as above, denote $P_{\Left}:=P_{\vec{r}_{\Left}}$ and $P_{\Right}:=P_{\vec{r}_{\Right}}$.
For any map $\vec{t}:[n]\rightarrow\mathbb{N}$, denote $GL(\vec{t}):=\text{\tiny$\prod_{k=1}^n$}GL(\vec{t}(k))$.
\begin{enumerate}[wide,labelwidth=!,labelindent=0pt]
\item
Define a \emph{pre-braid variety} $\hat{X}_{\vec{r}}(\beta)$ with \emph{monodromy} and \emph{microlocal monodromy} as: 
\begin{equation}\label{eqn:pre-braid_varieties}
\begin{tikzcd}
G=GL(N) & \hat{X}_{\vec{r}}(\beta):=P_{\Right}\times \text{\tiny$\prod_{k=\ell}^1$} M_{\vec{r}_{k,\Left}(i_k)\times\vec{r}_{k,\Left}(i_k+1)}(\field)
\arrow[l,swap,"{\mon_0}"]\arrow[r,"{\mumon_0}"] &  GL(\vec{r}_{\Right})
\end{tikzcd}
\end{equation}
where for any $\epsilon=(\epsilon_{\Right},\vec{\epsilon})\in\hat{X}_{\vec{r}}(\beta)$, set
\[
\mon_0(\epsilon):=\epsilon_{\Right}\bfB_{\vec{\beta}}^{\vec{r}}(\vec{\epsilon})\in G;\quad \mumon_0(\epsilon):= D(\epsilon_{\Right})\in GL(\vec{r}_{\Right})~~(\text{block diagonal part})
\]

\item
$\exists$ an algebraic (left) action of $(g_{\Right},g_{\Left})\in P_{\Right}\times P_{\Left}$ on $\epsilon=(\epsilon_{\Right},\vec{\epsilon})\in\hat{X}_{\vec{r}}(\beta)$ defined as follows:
$\hat{\epsilon}=(\hat{\epsilon}_{\Right},\hat{\vec{\epsilon}}):=(g_{\Right},g_{\Left})\cdot \epsilon$ is uniquely determined by
\begin{equation}\label{eqn:action_on_pre-braid_varieties}
[g_{\Right}]^{\vec{r}_{\Right}}\circ[\epsilon_{\Right}]\circ [\bfB_{\beta}^{\vec{r}}(\epsilon)]'\circ[g_{\Left}^{-1}]^{\vec{r}_{\Left}} = [\hat{\epsilon}_{\Right}]\circ[\bfB_{\beta}(\hat{\vec{\epsilon}})]' \in\underline{\FBD}_{n,N}.
\end{equation}
\emph{Diagrammatically}, the equation means: the left hand side is the concatenation 
\[
[g_{\Left}^{-1}|\bfH_{i_1}^{\vec{r}_1}(\epsilon_1)|\sigma_{i_1}^{\vec{r}_1}|\text{\tiny$\cdots$}|\bfH_{i_{\ell}}^{\vec{r}_{\ell}}(\epsilon_{i_{\ell}})|\sigma_{i_{\ell}}^{\vec{r}_{\ell}}|\epsilon_{\Right}|g_{\Right}]\in \FBD_{n,N};
\]
 push $[g_{\Left}^{-1}]=[g_{\Left}^{-1}]^{\vec{r}_{\Left}}$ by elementary moves (Figure \ref{fig:elementary_moves}) to the right as far as possible, the outcome becomes
$[\bfH_{i_1}^{\vec{r}_1}(\hat{\epsilon}_1)|\sigma_{i_1}^{\vec{r}_1}|\text{\tiny$\cdots$}|\bfH_{i_{\ell}}^{\vec{r}_{\ell}}(\hat{\epsilon}_{i_{\ell}})|\sigma_{i_{\ell}}^{\vec{r}_{\ell}}|\hat{\epsilon}_{\Right}]\in \FBD_{n,N}$, 
which is the right hand side.

\item
The diagram (\ref{eqn:pre-braid_varieties}) is equivariant with respect to the obvious morphisms of linear algebraic groups
\[
\begin{tikzcd}[column sep=6pc]
G^2 & P_{\Right}\times P_{\Left}\arrow[l,hookrightarrow,swap,"{\mon_1}"]\arrow[r,"{\mumon_1:=D(-)\times D(-)}"] & GL(\vec{r}_{\Right})\times GL(\vec{r}_{\Left}),
\end{tikzcd}
\]
where $(g_{\Right},g_{\Left})\in G^2$ acts on $\epsilon_{\infty}\in G$ by $(g_{\Right},g_{\Left})\cdot \epsilon_{\infty}:=g_{\Right}\epsilon_{\infty}g_{\Left}^{-1}$, 
and $(\overline{g}_{\Right},\overline{g}_{\Left})\in GL(\vec{r}_{\Right})\times GL(\vec{r}_{\Left})$ acts on $\overline{\epsilon}_{\Right}\in GL(\vec{r}_{\Right})$ by
\begin{equation}
(\overline{g}_{\Right},\overline{g}_{\Left})\cdot\overline{\epsilon}_{\Right}:=\overline{g}_{\Right}\overline{\epsilon}_{\Right}(\overline{g}_{\Left}^{\bfs(\beta^{\vec{r}})})^{-1};
\quad \overline{g}_{\Left}^{\bfs(\beta^{\vec{r}})}:=\bfs(\beta^{\vec{r}})\overline{g}_{\Left}(\bfs(\beta^{\vec{r}}))^{-1}.
\end{equation}
As a consequence, this defines a \emph{pre-braid stack} with \emph{monodromy} and \emph{microlocal monodromy}:
\begin{equation}\label{eqn:pre-braid_stacks}
\begin{tikzcd}
{[G/G^2]} & {[\hat{X}_{\vec{r}}(\beta)/(P_{\Right}\times P_{\Left})]} \arrow[l,swap,"{\mon}"]\arrow[r,"{\mumon}"] &  {[GL(\vec{r}_{\Right})/(GL(\vec{r}_{\Right})\times GL(\vec{r}_{\Left}))]}.
\end{tikzcd}
\end{equation}
\end{enumerate}
\end{definition/proposition}

\begin{remark}\label{rem:braid_varieties}
The terminology is explained by the following: In the uncolored case ($\vec{r}=1$ and $n=N$), denote $\hat{X}(\beta):=\hat{X}_{\vec{r}=1}(\beta)=B\times\mathbb{A}^{\ell}$. 
Recall that $\mB_{\beta}(\vec{\epsilon})=\bfB_{\beta}^{\vec{r}=1}(\vec{\epsilon})$. Then,
the \emph{braid variety} (see \cite{Mel19}, \cite[\S 4.1]{GKS22}) associated to $\beta$ is
\begin{equation}
X(\beta):=\{(\epsilon_{\Right},\vec{\epsilon})\in\hat{X}(\beta): \epsilon_{\Right}\mB_{\beta}(\vec{\epsilon})=\id\}\hookrightarrow\mathbb{A}^{\ell}.
\end{equation}
\end{remark}

\begin{remark}\label{rem:braid_relations_for_braid_matrices}
Similar to \cite[Lem.1.13]{Su23}, some diagram calculus in $\BD_{n,N}$ shows the following \emph{braid relations for (colored/block) braid matrices or $\bmdp$'s}:
Fix a color $\vec{r}_{\Left}:[n]\rightarrow\mathbb{N}$ of total rank $N$. Then:
\begin{enumerate}[wide,labelwidth=!,labelindent=0pt]
\item
For any $|i-j|>1$ in $[n]$, and any $\epsilon_1\in M_{\vec{r}_{\Left}(i)\times \vec{r}_{\Left}(i+1)}(\field), \epsilon_2\in M_{\vec{r}_{\Left}(j)\times \vec{r}_{\Left}(j+1)}(\field)$, we have
\[
[\bfB_i^{\vec{r}}(\epsilon_1)]\circ [\bfB_j^{\vec{r}}(\epsilon_2)] = [\bfB_j^{\vec{r}}(\epsilon_2)]\circ [\bfB_i^{\vec{r}}(\epsilon_1)] \in\BD_{n,N}.
\]
Here, $\vec{r}_{\Left}$ extends to a color $\vec{r}$ on $\sigma_j\sigma_i=\sigma_i\sigma_j\in\mathrm{Br}_n^+$.

\item
For any $1\leq i\leq n-2$, and any $\epsilon_1\in M_{\vec{r}_{\Left}(i+1)\times \vec{r}_{\Left}(i+2)}(\field), \epsilon_2\in M_{\vec{r}_{\Left}(i)\times \vec{r}_{\Left}(i+2)}(\field), \epsilon_3\in M_{\vec{r}_{\Left}(i)\times \vec{r}_{\Left}(i+1)}(\field)$, we have
\[
[\bfB_i^{\vec{r}}(\epsilon_1)]\circ [\bfB_{i+1}^{\vec{r}}(\epsilon_2)]\circ [\bfB_i^{\vec{r}}(\epsilon_3)] = [\bfB_{i+1}^{\vec{r}}(\epsilon_3)]
\circ [\bfB_i^{\vec{r}}(\epsilon_2-\epsilon_3\epsilon_1)]\circ [\bfB_{i+1}^{\vec{r}}(\epsilon_1)] \in\BD_{n,N}.
\]
Here, $\vec{r}_{\Left}$ extends to a color $\vec{r}$ on $\sigma_i\sigma_{i+1}\sigma_i=\sigma_{i+1}\sigma_i\sigma_{i+1}\in\mathrm{Br}_n^+$.
\end{enumerate}
As a consequence, the diagram (\ref{eqn:pre-braid_stacks}), up to a canonical isomorphism, depends only on $\beta^{\vec{r}}\in\Br_{n,N}^+$.
\end{remark}

Now, we can describe the diagram (\ref{eqn:local_diagram_for_Betti_moduli_stacks}):
\begin{proposition}\label{prop:local_diagram_for_Betti_moduli_stacks}
There is a natural identification of diagrams of algebraic stacks
\[
\begin{tikzcd}[row sep=2pc,column sep=0.5pc]
\Mloc_N(U_{\Left}\times\{z_0\})\simeq[\pt/G] &\Mloc_{N}(U\times\{z_0\})\simeq[G/G^2]\arrow[l,swap,"{\simeq}"]\arrow[r,"{\simeq}"] & \Mloc_N(U_{\Right}\times\{z_0\})\simeq[\pt/G] \\
\modulistack_{\vec{r}_{\Left}}(\beta_{\Left})\simeq [\pt/P_{\Left}]\arrow[u]\arrow[d] &
\modulistack_{\vec{r}}(\beta)\simeq [\hat{X}_{\vec{r}}(\beta)/(P_{\Left}\times P_{\Right})]\arrow[u,swap,"{\mon}"]\arrow[d,"{\mumon}"]\arrow[l,swap,"{\fr_{\Left}}"]\arrow[r,"{\fr_{\Right}}"] 
& \modulistack_{\vec{r}_{\Right}}(\beta_{\Right})\simeq [\pt/P_{\Right}]\arrow[u]\arrow[d]\\
\Mloc_{\vec{r}_{\Left}}(\beta_{\Left})\simeq [\pt/GL(\vec{r}_{\Left})] &\Mloc_{\vec{r}}(\beta)\simeq [GL(\vec{r}_{\Right})/(GL(\vec{r}_{\Left})\times GL(\vec{r}_{\Right}))]\arrow[l,swap]\arrow[r] & \Mloc_{\vec{r}_{\Right}}(\beta_{\Right})\simeq[\pt/GL(\vec{r}_{\Right})] 
\end{tikzcd}
\]
Here, the arrows in the middle column are given by (\ref{eqn:pre-braid_stacks}), and all the other arrows are the obvious ones.
\end{proposition}

\begin{proof}
See Section \ref{sec:prove_local_diagram_for_Betti_moduli_stacks}.
\end{proof}

As an immediate application, we describe the Betti moduli stacks associated to cylindrical closures of positive braids.
Suppose that $\vec{r}_{\Left}=\vec{r}_{\Right}$, so $\vec{r}$ defines a color on $\beta^{\circ}$. 

\begin{proposition}\label{prop:Betti_moduli_stacks_for_cylindrical_braid_closures}
We have a natural identification of diagrams of algebraic stacks
\begin{equation}
\begin{tikzcd}[column sep=1.5pc]
\Mloc_{N}(U\times\{z_0\})\simeq[G/G] & \modulistack_{\vec{r}}(\beta^{\circ})\simeq [\hat{X}_{\vec{r}}(\beta)/P_{\Right}]\arrow[l,swap,"{\mon}"]\arrow[r,"{\mumon}"]
& \Mloc_{\vec{r}}(\beta)\simeq [GL(\vec{r}_{\Right})/^{\bfs(\beta^{\vec{r}})}GL(\vec{r}_{\Right})],
\end{tikzcd}
\end{equation}
where $P_{\Right}$ (resp. $GL(\vec{r}_{\Right})$) acts on $\hat{X}_{\vec{r}}(\beta)$ (resp. $GL(\vec{r}_{\Right})$) via the diagonal embedding 
$P_{\Right}\hookrightarrow P_{\Left}\times P_{\Right}$ (resp. $GL(\vec{r}_{\Right})\hookrightarrow GL(\vec{r}_{\Left})\times GL(\vec{r}_{\Right})$). 
\end{proposition}

\noindent{}\textbf{Note}: in $[GL(\vec{r}_{\Right})/^{\bfs(\beta^{\vec{r}})}GL(\vec{r}_{\Right})]$, $\overline{g}_{\Right}\in GL(\vec{r}_{\Right})\subset G$ acts on $\overline{\epsilon}_{\Right}\in GL(\vec{r}_{\Right})\subset G$ via $\bfs(\beta^{\vec{r}})$-twisted conjugation:
\[
\overline{g}_{\Right}\cdot\overline{\epsilon}_{\Right} := \overline{g}_{\Right}\epsilon_{\Right}(\overline{g}_{\Right}^{\bfs(\beta^{\vec{r}})})^{-1}.
\]

\begin{proof}
By the gluing property of Betti moduli stacks, we have a natural equivalence
\[
\modulistack_{\vec{r}}(\beta^{\circ}) \simeq \modulistack_{\vec{r}}(\beta)\times_{\modulistack_{\vec{r}_{\Left}}(\beta_{\Left})\times\modulistack_{\vec{r}_{\Right}}(\beta)} \modulistack_{\vec{r}_{\Right}}(\beta)
\]
compatible with monodromy and microlocal monodromy. By Proposition \ref{prop:local_diagram_for_Betti_moduli_stacks}, it follows that
\[
\modulistack_{\vec{r}}(\beta^{\circ}) \simeq [\hat{X}_{\vec{r}}(\beta)/(P_{\Right}\times P_{\Left})] \times_{[\pt/P_{\Right}]\times[\pt/P_{\Left}]} [\pt/P_{\Right}] \simeq [\hat{X}_{\vec{r}}(\beta)/P_{\Right}],
\]
compatible with monodromy and microlocal monodromy. Done.
\end{proof}

\subsection{Betti moduli stacks over punctured Riemann surfaces}\label{subsec:Betti_moduli_stacks_over_punctured_Riemann_surfaces}

We now go back to the global case. Recall that $(C,\{p_i\}_{i=1}^k,\{\Lambda_i\}_{i=1}^k)$ is a closed genus $g$ Riemann surface with Legendrian links $\Lambda_i\isomorphic \beta_i^{\circ}$ around the punctures $p_i$, and $\vec{r}:\pi_0(\Lambda=\sqcup_{i=1}^k\Lambda_i)\rightarrow\mathbb{N}$ is a map such that $\vec{r}_i:=\vec{r}|_{\pi_0(\Lambda_i)}$ is a color of total rank $N$ for each $i$.
So, $\beta_i^{\vec{r}_i}\in\FBr_{n_i,N}^+$ for some $n_i\geq 1$. Denote $P_{i,\Right}:=P_{\vec{r}_{i,\Right}}$ with unipotent radical $N_{i,\Right}$, and $\modulistack_{\vec{r}}:=\modulistack_{\vec{r}}(C,\{p_i\},\{\Lambda_i\})$.

\begin{theorem}\label{thm:Betti_moduli_stack_via_braids}
We have a natural identification of diagrams of algebraic stacks:
{\small\begin{equation}
\begin{tikzcd}
{\modulistack_{\vec{r}}\simeq [\{A_j,x_i\in G, \epsilon_i\in\hat{X}_{\vec{r}_i}(\beta_i): \text{\tiny$\prod_{j=1}^g$}(A_{2j-1},A_{2j}) \text{\tiny$\prod_{i=1}^k$}x_i\mon_i(\epsilon_i)x_i^{-1}=\id\}/(G\times\text{\tiny$\prod_{i=1}^k$}P_{i,\Right})]\arrow[d,"\mumon=\text{\tiny$\prod_i$}\mumon_i"]}\\
{\Mloc_{\vec{r}}(\Lambda) \simeq \text{\tiny$\prod_{i=1}^k$} [GL(\vec{r}_{i,\Right})/^{\bfs(\beta_i^{\vec{r}_i})}GL(\vec{r}_{i,\Right})]}
\end{tikzcd}
\end{equation}}
Here, $(h,h_{i,\Right})=(h,(h_{i,\Right})_{i=1}^R)\in G\times\prod_{i=1}^kP_{i,\Right}$ acts on $(A_j,x_i,\epsilon_i)=((A_j)_{j=1}^{2g},(x_i,\epsilon_i)_{i=1}^k)$ by
{\small\begin{equation}
(h,h_{i,\Right})\cdot (A_j,x_i,\epsilon_i):=(hA_jh^{-1},hx_ih_{i,\Right}^{-1},h_{i,\Right}\cdot\epsilon_i).
\end{equation}}
The microlocal monodromy $\mumon_i$ is given by:
{\small\begin{eqnarray}
&&(\mumon_i)_0(A_j,x_i,\epsilon_i):=(\mumon_i)_0(\epsilon_i)=D(\epsilon_{i,\Right})\in GL(\vec{r}_{i,\Right}),\\ 
&&(\mumon_i)_1(h,h_{i,\Right}):=(\mumon_i)_1(h_{i,\Right})=D(h_{i,\Right})\in GL(\vec{r}_{i,\Right}).\nonumber
\end{eqnarray}}
\end{theorem}

\begin{proof}
This is a direct consequence of the gluing property (\ref{eqn:gluing_property_for_global_Betti_moduli_stacks}) and Proposition \ref{prop:Betti_moduli_stacks_for_cylindrical_braid_closures}.
\end{proof}

Now, Let $L$ be any fixed local system of rank $\vec{r}$ on $\Lambda=\sqcup_{i=1}^k\Lambda_i$.
Let's describe the \emph{(restricted) Betti moduli stack} $\modulistack_L:=\modulistack_L(C,\{p_i\},\{\Lambda_i\})$.

\begin{enumerate}[wide,labelwidth=!,labelindent=0pt]
\item
For each $1\leq i\leq k$, the orbits of $s({\beta_i})\in S_{n_i}$ induces a partition 
\[
[n_i] = \sqcup_{v=1}^{c_i}O_{i,v}
\]
corresponding to the connected components of $\Lambda_i=\beta_i^{\circ}$. Let $\Lambda_i^v$ be the connected component determined by $O_{i,v}$. 
Denote by $o_{i,v}\in O_{i,v}$ the least element of $O_{i,v}$, and $r_{i,v}:=\vec{r}_i(\Lambda_i^v)=\vec{r}_i(o_{i,v})\in\mathbb{N}$. 

\item
Denote $L_i:=L|_{\Lambda_i}$ and $L_i^v:=L_i|_{\Lambda_i^v}$.
Taking the monodromy, each $L_i$ corresponds to an element $\vec{C}_{i,\Right}=(C_{i,\Right}^1,{\scriptstyle\cdots},C_{i,\Right}^{n_i})$ of $GL(\vec{r}_{i,\Right})\subset G$ 
up to $\bfs({\beta_i}^{\vec{r}_i})$-twisted conjugation. After such a conjugation, we can \textbf{assume} that
\begin{equation}\label{eqn:reduced_microlocal_monodromy}
C_{i,\Right}^j=\id,~\forall j\in O_{i,v}\setminus\{o_{i,v}\}.
\end{equation}
In this case, we say that $\vec{C}_{i,\Right}$ is \textbf{reduced}, and may use the identification
\begin{equation}\label{eqn:microlocal_monodromy_via_twisted_conjugacy_classes}
\vec{C}_{i,\Right}\isomorphic (C_{i,v})_{v=1}^{c_i}=:\vec{C}_i;\quad C_{i,v}:=C_{i,\Right}^{o_{i,v}}\in GL(\vec{r}_{i,\Right}(o_{i,v}))=GL(r_{i,v}).
\end{equation}
Let $Z(\vec{C}_{i,\Right})$ be the stablizer of $\vec{C}_{i,\Right}$ under the $\bfs({\beta_i}^{\vec{r}_i})$-twisted conjugation of $GL(\vec{r}_{i,\Right})$:
\begin{eqnarray}\label{eqn:reduced_stablizer}
Z(\vec{C}_{i,\Right})&:=&\{\vec{h}_{i,\Right}=(h_{i,\Right}^j)_{j=1}^{n_i}\in GL(\vec{r}_{i,\Right}) : \vec{h}_{i,\Right}\cdot^{\bfs(\beta_i^{\vec{r}_i})} \vec{C}_{i,\Right}=\vec{C}_{i,\Right}\},\\\nonumber
&=&\{\vec{h}_{i,\Right}=(h_{i,\Right}^j)_{j=1}^{n_i}\in GL(\vec{r}_{i,\Right}) : h_{i,\Right}^j=h_{i,\Right}^{o_{i,v}}\in Z(C_{i,v}), \forall j\in O_{i,v}\} \isomorphic \text{\tiny$\prod_{v=1}^{c_i}$} Z(C_{i,v}).\nonumber
\end{eqnarray}
Here, $Z(C_{i,v})$ is the centralizer of $C_{i,v}$ in $GL(r_{i,v})$.

\item
By definition and above, we then obtain a pullback diagram of algebraic stacks
\[
\begin{tikzcd}
{\modulistack_{L}(C,\{p_i\},\{\Lambda_i\})}\arrow[r]\arrow[d]\arrow[dr,phantom,"\lrcorner",very near start] & {\modulistack_{\vec{r}}(C,\{p_i\},\{\Lambda_i\})}\arrow[d,"\mumon"]\\
{[pt/Z(\vec{C}_{i,\Right})]}\arrow[r,"{[L]}"] & {\prod_{i=1}^k[GL(\vec{r}_{i,\Right})/^{\bfs(\beta_i^{\vec{r}_i})}GL(\vec{r}_{i,\Right})]}
\end{tikzcd}
\]
\end{enumerate}

As an immediate consequence of Theorem \ref{thm:Betti_moduli_stack_via_braids}, we now obtain
\begin{corollary}\label{cor:restricted_Betti_moduli_stack_via_braids}
We have a naturally identification of algebraic stacks
\begin{eqnarray}
&&\modulistack_L\simeq [(\mumon)_0^{-1}((\vec{C}_{i,\Right})_{i=1}^k)/(\mumon)_1^{-1}(\text{\tiny$\prod_{i=1}^k$}Z(\vec{C}_{i,\Right}))]\\\nonumber
&=&[\{A_j,x_i\in G,\epsilon_i\in\hat{X}_{\vec{r}_i}(\beta_i,\vec{C}_i):
\text{\tiny$\prod_{j=1}^g$} (A_{2j-1},A_{2j}) \text{\tiny$\prod_{i=1}^k$} x_i\mon_i(\epsilon_i)x_i^{-1}=\id\}/(G\times\text{\tiny$\prod_{i=1}^k$}P_{i,\Right}(\vec{C}_i))]\nonumber
\end{eqnarray}
Here, $\hat{X}_{\vec{r}_i}(\beta_i,\vec{C}_i):=\{\epsilon_i\in\hat{X}_{\vec{r}_i}(\beta_i) : D(\epsilon_{i,\Right})=\vec{C}_{i,\Right}\}$ (\emph{restricted pre-braid variety}), and 
$P_{i,\Right}(\vec{C}_i):=\{h_{i,\Right}\in P_{i,\Right} : D(h_{i,\Right})\in Z(\vec{C}_{i,\Right})\isomorphic \text{\tiny$\prod_{v=1}^{c_i}$} Z(C_{i,v})\} = Z(\vec{C}_{i,\Right})\ltimes N_{i,\Right}$.
\end{corollary}

\subsection{Betti moduli spaces over punctured Riemann surfaces}\label{subsec:Betti_moduli_spaces_over_punctured_Riemann_surfaces}

\subsubsection{Existence of Betti moduli spaces}
We firstly introduce a \emph{modified version of pre-braid varieties}.
Suppose $\beta^{\vec{r}}=\gamma^{\vec{r}}\Delta^{\vec{r}}\in\FBr_{n,N}^+$, where $\gamma\in\FBr_n^+$ and $\Delta=\Delta_n\in\FBr_n^+$ is a half-twist. So, $\permutation(\Delta)=w_0\in S_n$ is the longest element of length $\ell_0=\binom{n}{2}$.
By a bit abuse of notations, write $\beta^{\vec{r}}=\sigma_{i_{\ell}}^{\vec{r}_{\ell}}\text{\tiny$\cdots$}\sigma_{i_1}^{\vec{r}_1}$ as before. 
So, $\Delta^{\vec{r}}=\sigma_{i_{\ell_0}}^{\vec{r}_{\ell_0}}\text{\tiny$\cdots$}\sigma_{i_1}^{\vec{r}_1}$. Define
\begin{eqnarray*}
&&\hat{X}_{\vec{r}}(\gamma;w_0):=\{\epsilon=(\epsilon_{\Right},\vec{\epsilon}=(\epsilon_{\ell},\text{\tiny$\cdots$},\epsilon_1))\in\hat{X}_{\vec{r}}(\beta): \epsilon_j=0, \forall 1\leq j\leq \ell_0\};\\
&&\hat{X}_{\vec{r}}(\gamma,\vec{C}_{\Right};w_0):=\{\epsilon\in\hat{X}_{\vec{r}}(\gamma;w_0): D(\epsilon_{\Right})=\vec{C}_{\Right}\in GL(\vec{r}_{\Right})\}, \forall \vec{C}_{\Right}\in GL(\vec{r}_{\Right}).
\end{eqnarray*}
Let $N_{\Left}$ be the unipotent radical of $P_{\Left}=P_{\vec{r}_{\Left}}=P_{\vec{r}_{1,\Left}}$. We have a natural identification 
\[
\hat{X}_{\vec{r}}(\gamma\Delta)\isomorphic \hat{X}_{\vec{r}}(\gamma;w_0)\times N_{\Left}:
\epsilon=(\epsilon_{\Right},\vec{\epsilon}) \isomorphic (\epsilon'=(\epsilon_{\Right},\vec{\epsilon}'=(\epsilon_{\ell},\text{\tiny$\cdots$},\epsilon_{\ell_0+1},0^{\ell_0})),\epsilon_u)
\] 
via 
$[\epsilon_{\Right}]^{\vec{r}_{\Right}}\circ[\bfB_{\beta}^{\vec{r}}(\vec{\epsilon})]' = [\epsilon_{\Right}]^{\vec{r}_{\Right}}\circ [\bfB_{\beta}^{\vec{r}}(\vec{\epsilon}')]'\circ[\epsilon_u]^{\vec{r}_{\Left}}\in\underline{\FBD}_{n,N}$. 
Then, the action of $(g_{\Right},g_{\Left})\in P_{\Right}\times P_{\Left}$ on $\epsilon\isomorphic (\epsilon',\epsilon_u)\in \hat{X}_{\vec{r}}(\gamma\Delta)\isomorphic\hat{X}_{\vec{r}}(\gamma;w_0)\times N_{\Left}$ becomes:
\[
(g_{\Right},g_{\Left})\cdot (\epsilon',\epsilon_u):=((g_{\Right},D(g_{\Left}))\cdot\epsilon',D(g_{\Left})\epsilon_ug_{\Left}^{-1}).
\]
In particular, $\hat{X}_{\vec{r}}(\gamma\Delta)/N_{\Left}\isomorphic \hat{X}_{\vec{r}}(\gamma;w_0)$ is a geometric quotient.

For simplicity, denote
\begin{equation}
R_{\vec{r}}:= \{A_j,x_i\in G, \epsilon_i\in\hat{X}_{\vec{r}_i}(\gamma_i;s(\Delta_{n_i})):
\text{\tiny$\prod_{j=1}^g$} (A_{2j-1},A_{2j}) \text{\tiny$\prod_{i=1}^k$} x_i\mon_i(\epsilon_i)x_i^{-1}=\id\}.
\end{equation}
Taking the determinant in the defining equation, we obtain
$\text{\tiny$\prod_{i=1}^k$}\det(D(\epsilon_{i,\Right})\bfs(\beta_i^{\vec{r}_i})) =1$. Thus, we define
\begin{equation}
\mu:R_{\vec{r}}\rightarrow G_{\vec{r},1}:=\{(\vec{C}_{i,\Right})_{i=1}^k\in\text{\tiny$\prod_{i=1}^k$}GL(\vec{r}_{i,R}): \text{\tiny$\prod_{i=1}^k$}\det(\vec{C}_{i,R}\bfs(\beta_i^{\vec{r}_i})) =1\}: 
(A_j,x_i,\epsilon_i)\mapsto (D(\epsilon_{i,\Right}))_{i=1}^k.
\end{equation}
\textbf{From now on}, we always assume that $(\vec{C}_{i,\Right})_{i=1}^k$ is reduced and belongs to $G_{\vec{r},1}$, unless otherwise stated.

\begin{definition}
Let $\beta^{\vec{r}}\in\FBr_{n,N}^+$. We say $\vec{C}_{\Right}' \in GL(\vec{r}_{\Right})$ is \emph{$\bfs(\beta^{\vec{r}})$-twisted semisimple} if it's $\bfs(\beta^{\vec{r}})$-twisted conjugate to some reduced
$GL(\vec{r}_{\Right})\ni\vec{C}_{\Right}\isomorphic (C_v)_{v=1}^c=\vec{C}$ as in (\ref{eqn:microlocal_monodromy_via_twisted_conjugacy_classes}), with each $C_v\in GL(\vec{r}_{\Right}(o_v))$ semisimple.
\end{definition}

\begin{corollary}\label{cor:restricted_Betti_moduli_space_via_braids}
In Corollary \ref{cor:restricted_Betti_moduli_stack_via_braids}, if each $\beta_i=\gamma_i\Delta_{n_i}\in\FBr_{n_i}^+$ contains a half-twist $\Delta_{n_i}$, then 
\[
\modulistack_{\vec{r}}\simeq [R_{\vec{r}}/(G\times\text{\tiny$\prod_{i=1}^k$}GL(\vec{r}_{i,\Right}))];
\quad \modulistack_L\simeq[\mu^{-1}((\vec{C}_{i,\Right})_{i=1}^k)/(G\times\text{\tiny$\prod_{i=1}^k$}Z(\vec{C}_{i,\Right}))].
\]
If furthermore 
each $\vec{C}_{i,\Right}\isomorphic \vec{C}_i$ is ($\bfs(\beta_i^{\vec{r}_i})$-twisted) semisimple, then
$\modulispace_L=\modulispace_L(C,\{p_i\},\{\beta_i^{\circ}\})$ (Betti moduli space ) exists and is the affine GIT quotient\footnote{Here, we assume that $\field$ is an algebraically closed field of characteristic $0$. For simplicity, $\field=\mathbb{C}$.} 
$\mu^{-1}((\vec{C}_{i,\Right})_{i=1}^k)//(G\times\text{\tiny$\prod_{i=1}^k$}Z(\vec{C}_{i,\Right}))$. That is,
\[
\modulispace_L\isomorphic \{A_j,x_i\in G, \epsilon_i\in\hat{X}_{\vec{r}_i}(\gamma_i,\vec{C}_{i,\Right};s(\Delta_{n_i})):
\text{\tiny$\prod_{j=1}^g$} (A_{2j-1},A_{2j}) \text{\tiny$\prod_{i=1}^k$} x_i\mon_i(\epsilon_i)x_i^{-1}=\id\}//(G\times\text{\tiny$\prod_{i=1}^k$}Z(\vec{C}_{i,\Right})).
\]
\end{corollary}

\begin{proof}
By the previous discussion, 
\begin{eqnarray*}
&&\{A_j,x_i\in G, \epsilon_i\in\hat{X}_{\vec{r}_i}(\beta_i,\vec{C}_{i,\Right}):
\text{\tiny$\prod_{j=1}^g$} (A_{2j-1},A_{2j}) \text{\tiny$\prod_{i=1}^k$} x_i\mon_i(\epsilon_i)x_i^{-1}=\id\}/\text{\tiny$\prod_{i=1}^k$}N_{i,\Right}\\
&\isomorphic& \{A_j,x_i\in G, \epsilon_i\in\hat{X}_{\vec{r}_i}(\gamma_i,\vec{C}_{i,\Right};s(\Delta_{n_i})):
\text{\tiny$\prod_{j=1}^g$} (A_{2j-1},A_{2j}) \text{\tiny$\prod_{i=1}^k$} x_i\mon_i(\epsilon_i)x_i^{-1}=\id\}
\end{eqnarray*}
is a geometric quotient. Then, the first statement follows from Theorem \ref{thm:Betti_moduli_stack_via_braids} and Corollary \ref{cor:restricted_Betti_moduli_stack_via_braids}. 
If furthermore each $\vec{C}_{i,\Right}$ is semisimple, then $Z(\vec{C}_{i,\Right})$ is reductive. Now, by Lemma \ref{lem:example_of_good_moduli_space}.(2), 
the good moduli space $\modulispace_L$ for $\modulistack_L$ exists and is given by the desired affine GIT quotient.
\end{proof}

\begin{remark}
The Betti moduli spaces $\modulispace_L$ give a microlocal reformulation and generalization of all known character varieties over (punctured) Riemann surfaces.
For example, if each $\beta_i=\id\in\Br_1^+$ and $\vec{r}_i=N$, then $\modulispace_L$ recovers the tame $GL(N)$-character varieties studied in \cite{HLRV11}.
If each $\beta_i\in\Br_{n_i}^+$ is of some special form (morally, iterated cablings of even powers $\Delta^{2m}$ of full twists in the unramified/untwisted case, resp. iterated cablings of `$\Delta^{\frac{2m}{d}}$' in the ramified/twisted case), then $\modulispace_L$ recovers the untwisted/unramified (resp. twisted/ramified) wild character varieties in \cite{Boa07,Boa14} (resp. \cite{BY15}). 
\end{remark}

\subsubsection{Smoothness and connectedness}

\textbf{From now on}, we will always assume the following: 
\begin{itemize}[wide,labelwidth=!,labelindent=0pt]
\item
For each $1\leq i\leq k$, $\beta_i^{\vec{r}_i}=\gamma_i^{\vec{r}_i}\Delta_{n_i}^{\vec{r}_i}\in\FBr_{n_i,N}^+$.
\end{itemize}

\begin{lemma}\label{lem:smoothness_of_representation_varieties}
If there exists some $1\leq i\leq k$, say, $i=k$, such that $\beta_k=\Delta_{n_k}\beta_k'\Delta_{n_k}\in\FBr_{n_k}^+$, then $R_{\vec{r}}$ is smooth affine and connected of dimension
$n^2(2g+k-1) + \text{\tiny$\sum_{i=1}^k$}\dim \hat{X}_{\vec{r}_i}(\gamma_i;\permutation(\Delta_{n_i}))$.
\end{lemma}

\begin{proof}
Denote $P:=P_{\vec{r}_{k,R}}$ and $N_P=N(P)$ is the unipotent radical. 
Let $W=S_n$ be the Weyl group of $G=GL(N)$ and $W_P:=W(GL(\vec{r}_{k,\Right}))$ be the Weyl group of the Levi subgroup $GL(\vec{r}_{k,\Right})$ of $P$.
Denote $w_{0,P}:=\bfs(\Delta_{n_k}^{\vec{r}_k})\in S_N$. Observe that $w_{0,P}$ is the shortest representative of $w_0$ in $W_p\backslash W$.
We use the identification 
\[
\hat{X}_{\vec{r}_k}(\gamma_k;\Delta_{n_k})\isomorphic P\times N_P\times \mathbb{A}^{\dim\hat{X}_{\vec{r}_k}(\gamma_k;\Delta_N)-n^2}:
\epsilon_k=(\epsilon_{k,\Right},\vec{\epsilon}_k=(\epsilon_{k,\ell(\beta_k)},\text{\tiny$\cdots$},\epsilon_{k,1}))\mapsto (\epsilon_{k,\Right},u,\epsilon_{k,\leq \ell(\beta_k'\Delta_{n_k})}),
\] 
where $\epsilon_{k,\leq \ell(\beta_k'\Delta_{n_k})}:=(\epsilon_{k,\ell(\beta_k'\Delta_{n_k})},\text{\tiny$\cdots$},\epsilon_{k,1})$ and $u\in N_P$ is defined by
$\bfB_{\Delta_{n_k}}^{\vec{r}_k}(\epsilon_{k,\ell(\beta_k)},\text{\tiny$\cdots$},\epsilon_{k,\ell(\beta_k'\Delta_{n_k})+1}) = w_{0,P}u$.
Then,
the defining equation for $R_{\vec{r}}$ may be written as 
\[
\text{\tiny$\prod_{j=1}^g$} (A_{2j-1},A_{2j}) \text{\tiny$\prod_{i=1}^{k-1}$} (x_i\mon_i(\epsilon_i)x_i^{-1}) x_k\epsilon_{k,\Right}w_{0,P}u \bfB_{\beta_k'\Delta_N}^{\vec{r}_k}(\epsilon_{k,\leq\ell(\beta_k'\Delta_N)})x_k^{-1}=\id.
\]
Equivalently, $Pw_{0,P}N_P\ni \epsilon_{k,\Right}w_{0,P}u = f((A_j)_{j=1}^{2g},(x_i)_{i=1}^k,(\epsilon_i)_{i=1}^{k-1},\epsilon_{k,\leq\ell(\beta_k'\Delta_N)})$, 
where $f=(f_{a,b})_{1\leq a,b\leq N}:Y:=G^{2g+k}\times \text{\tiny$\prod_{i=1}^{k-1}$}\hat{X}_{\vec{r}_i}(\gamma_i;\Delta_{n_i})\times \mathbb{A}^{\dim\hat{X}_{\vec{r}_k}(\gamma_k;\Delta_N)-n^2} \rightarrow G$ is 
an algebraic map.
Observe that $P\times N_P\isomorphic Pw_{0,P}N_P=Pw_{0,P}B \subset G$ is the maximal cell in the Bruhat cell decomposition $G=\sqcup_{[w]\in W_P\backslash W} P\dot{w}B$, where $\dot{w}$ denotes the shortest representative in $W$ of $[w]\in W_P\backslash W$.
We can solve for $\epsilon_{k.\Right}, u$, then $R_{\vec{r}}\isomorphic\{y\in Y: f(y)\in Pw_{0,P}N_P=Pw_{0,P}B\}$. 
By Gaussian elimination, observe that 
\[
Pw_{0,P}N_P=Pw_{0,P}B = \{g=(g_{a,b})_{1\leq a,b\leq N}\in G: Q_{m_j}(g)\neq 0, \forall 1\leq j\leq n_k\},
\] 
where $m_j:=\text{\tiny$\sum_{q=n_k-j+1}^{n_k}$}\vec{r}_{k,\Right}(q)$ and
$Q_m(g):=\det(g_{a,b})_{N-m+1\leq a\leq N,1\leq b\leq m}$.
Thus,
$R_{\vec{r}}\isomorphic\{y\in Y: Q_{m_j}(f(y))\neq 0, \forall 1\leq j\leq n_k\}$ is isomorphic to an open affine subvariety of $Y$. The rest is clear.
\end{proof}

\subsubsection{Free action}

\begin{definition}[Generic condition]\label{def:generic_condition}
Let $(C,\{p_i\},\{\Lambda_i\})$ and $\vec{r}:\pi_0(\Lambda=\text{\tiny$\sqcup_i$}\Lambda_i)\rightarrow \mathbb{N}$ as above.
\begin{enumerate}[wide,labelwidth=!,labelindent=0pt]
\item
Given any color $\vec{r}'=(\vec{r}_i')_{i=1}^k$ of total rank $N'$ on $\Lambda=\sqcup\Lambda_i$, i.e. $\vec{r}_i':\pi_0(\Lambda_i=\beta_i^{\circ})\rightarrow\mathbb{N}$ is a color of total rank $N'$, we say that $\vec{r}'\leq \vec{r}$ if, 
$\vec{r}_i'\leq\vec{r}_i$ for all $1\leq i\leq k$.

\item
Any reduced $(\vec{C}_{i,\Right})_{i=1}^k\in G_{\vec{r},1}$ is called \emph{generic} if, for any $0<N'<N$, any color $\vec{r}'=(\vec{r}_i')_{i=1}^k<\vec{r}=(\vec{r}_i)_{i=1}^k$ of total rank $N'$, and any choice of 
$r_{i,v}':=\vec{r}_{i,\Right}'(o_{i,v})$ eigenvalues $\alpha_{i,v}^1,\text{\tiny$\cdots$},\alpha_{i,v}^{\vec{r}_{i,v}'}$ of $C_{i,v}$ for each $1\leq v\leq c_i,1\leq i\leq k$, we have
\begin{equation}
\text{\tiny$\prod_{i=1}^k\prod_{v=1}^{c_i}\prod_{j=1}^{\vec{r}_{i,v}'}$}\alpha_{i,v}^j \neq \text{\tiny$\prod_{i=1}^k$} \det \bfs(\beta_i^{\vec{r}_i'}).
\end{equation}
More generally, $(\vec{C}_{i,\Right}')_{i=1}^k\in G_{\vec{r},1}$ is \emph{generic} if it's $(\bfs(\beta_i^{\vec{r}_i}))_{i=1}^k$-twisted conjugate to a reduced and generic $(\vec{C}_{i,\Right})_{i=1}^k\in G_{\vec{r},1}$.
\end{enumerate}
\end{definition}

\begin{lemma}\label{lem:generic_condition_implies_free_action}
In the setting of Corollary \ref{cor:restricted_Betti_moduli_space_via_braids}:
\begin{enumerate}[wide,labelwidth=!,labelindent=0pt]
\item
If $(\vec{C}_{i,\Right})_{i=1}^k\in G_{\vec{r},1}$ is reduced and generic, then $PG_{\pa}:=(G\times\text{\tiny$\prod_{i=1}^k$}Z(\vec{C}_{i,\Right}))/\field^{\times}$ acts freely on $\mu^{-1}((\vec{C}_{i,\Right})_{i=1}^k)$.

\item
If furthermore $(\vec{C}_{i,\Right})_{i=1}^k$ is semisimple and  a regular value of $\mu$, and $\beta_k=\Delta_{n_k}\beta_k'\Delta_{n_k}\in\FBr_{n_k}^+$, then
\[
\modulispace_L \isomorphic \mu^{-1}((\vec{C}_{i,\Right})_{i=1}^k)/PG_{\pa}
\]
is smooth affine and of pure dimension.
\end{enumerate}
\end{lemma}

\begin{proof}

\noindent{}(1).
Fix any $(A_j,x_i,\epsilon_i)\in R_{\vec{r}}$.
If $(g,(\vec{h}_{i,\Right})_{i=1}^k)\in G\times \text{\tiny$\prod_{i=1}^k$}Z(\vec{C}_{i,\Right})$ satisfies
\[
(g,\vec{h}_{i,\Right})\cdot (A_j,x_i,\epsilon_i) = (gA_jg^{-1},gx_i\vec{h}_{i,\Right}^{-1},\vec{h}_{i,\Right}\cdot\epsilon_i) = (A_j,x_i,\epsilon_i), 
\]
we would like to show that for some $\lambda\in\field^{\times}$, we have $g=\vec{h}_{i,\Right}\in \lambda I_N, \forall 1\leq i\leq k$.

Denote $V_{\lambda}:=\ker(g-\lambda I_N)\subset \field^N, \forall \lambda\in\field^{\times}$. As $gx_i=x_i\vec{h}_{i,\Right}$, we see that for any $0\neq v\in\field^N$, we have
\[
v\in\ker(\vec{h}_{i,\Right}-\lambda I_N) \Leftrightarrow x_iv\in V_{\lambda}.
\]
That is, $\ker(\vec{h}_{i,\Right}-\lambda I_N)=x_i^{-1}V_{\lambda}$. Now, fix any $\lambda\in\field^{\times}$ such that $V:=V_{\lambda}\neq 0$.
Observe that $gA_j=A_jg$ and $g(x_i\mon_i(\epsilon_i)x_i^{-1})=(x_i\mon_i(\epsilon_i)x_i^{-1})g$, so $A_j$ and $x_i\mon_i(\epsilon_i)x_i^{-1}$ preserve $V$. 
By the defining equation of $R_{\vec{r}}$, we then have
\[
\text{\tiny$\prod_{j=1}^g$}(A_{2j-1}|_V,A_{2j}|_V)\text{\tiny$\prod_{i=1}^k$} (x_i\mon_i(\epsilon_i)x_i^{-1})|_V = \id_V.
\]
Taking the determinant, this implies that $\text{\tiny$\prod_{i=1}^k$} \det (x_i\mon_i(\epsilon_i)x_i^{-1})|_V=1$.
Equivalently, $\text{\tiny$\prod_{i=1}^k$} \det (\mon_i(\epsilon_i)|_{V_i})=1$, where $V_i:=x_i^{-1}V=\ker(\vec{h}_{i,\Right}-\lambda I_N)$. 
We may write $V_i=\oplus_{j=1}^{n_i}V_i^j$ with $V_i^j:=\ker(h_{i,\Right}^j-\lambda I_{\vec{r}_{i,\Right}(j)}) \hookrightarrow \field^N$.
Recall that $\epsilon_i=(\epsilon_{i,\Right},\vec{\epsilon}_i=(\epsilon_{i,\ell(\beta_i)},\text{\tiny$\cdots$},\epsilon_{i,1}))$ 
and $\mon_i(\epsilon_i)=\epsilon_{i,\Right}\bfB_{\beta_i}^{\vec{r}_i}(\vec{\epsilon}_i)$. As $\vec{h}_{i,\Right}\cdot\epsilon_i = \epsilon_i$, by Lemma \ref{lem:stablizers_for_pre-braid_stacks} below, we have
\[
1= \text{\tiny$\prod_{i=1}^k$} \det (\mon_i(\epsilon_i)|_{V_i}) = \text{\tiny$\prod_{i=1}^k\prod_{v=1}^{c_i}$}\det(C_{i,v}|_{V_i^v}) \cdot \text{\tiny$\prod_{i=1}^k$}\det((\bfs(\beta_i^{\vec{r}_i}))|_{V_i})),
\]
By the generic assumption, we must have $V_i=V=\field^N$, and hence $g=\vec{h}_{i,\Right}=\lambda I_N$, as desired.

\noindent{}(2). This is a direct consequence of (1) and Lemma \ref{lem:smoothness_of_representation_varieties}.
\end{proof}

\begin{lemma}\label{lem:stablizers_for_pre-braid_stacks}
Let $\beta^{\vec{t}}=\sigma_{i_{\ell}}^{\vec{t}_{\ell}}\text{\tiny$\cdots$}\sigma_{i_1}^{\vec{t}_1}\in\FBr_{n,N}^+$ with $\vec{t}_{\Left}=\vec{t}_{\Right}$. 
Suppose $\vec{h}_{\Right} = (h_{\Right}^j)_{j=1}^n\in GL(\vec{t}_R)=\text{\tiny$\prod_{j=1}^n$} GL(\vec{t}_{\Right}(j))$ and $\epsilon\in\hat{X}_{\vec{t}}(\beta)$ such that $\vec{h}_{\Right}\cdot\epsilon=\epsilon$. 
Let $W=\ker(\vec{h}_{\Right}-\lambda I_N)\subset\field^N$ be any eigenspace of $\vec{h}_{\Right}$, we may write $W=\oplus_{j=1}^n W^j$ with $W^j:=\ker(h_{\Right}^j-\lambda I_{\vec{t}_{\Right}(j)}) \hookrightarrow \field^N$.
Then,
\begin{enumerate}[wide,labelwidth=!,labelindent=0pt]
\item
$W$ is stable under $D(\epsilon_{\Right})\bfs(\beta^{\vec{t}})$ and
\[
\det(\mon(\epsilon)|_W) = \det((D(\epsilon_{\Right})\bfs(\beta^{\vec{t}}))|_W) = \det [\bfs(\beta^{\vec{t}})D(\epsilon_{\Right}))|_{\bfs(\beta^{\vec{t}})W}].
\]

\item
If furthermore $\vec{C}_{\Right}:=D(\epsilon_{\Right})\isomorphic (C_v)_{v=1}^c =:\vec{C}$ is reduced as in (\ref{eqn:reduced_microlocal_monodromy}), then:
$\bfs(\beta^{\vec{t}})$ acts by cyclic rotation on $\oplus_{j\in O_v}W^j$, with $W^{\permutation(\beta)(j)} = \bfs(\beta^{\vec{t}})W^j\hookrightarrow \field^N$; 
$\vec{C}_{\Right}$ preserves each $W^j\hookrightarrow \field^N$ with $\vec{C}_{\Right}=\id$ on $W^j$ if $j\neq o_v$; and
\begin{equation}
\det(\mon(\epsilon)|_W) = \det(\vec{C}_{\Right}|_W) \det(\bfs(\beta^{\vec{t}})|_W) = \text{\tiny$\prod_{v=1}^c$} \det(C_v|_{W^{o_v}}) \det\bfs(\beta^{\vec{t}'}),
\end{equation}
where $\vec{t}'$ is determined by $\vec{t}_{\Right}'(j):=\dim W^j$. In particular, $\det(\bfs(\beta^{\vec{t}'})) = (-1)^{\text{\tiny$\sum_{v=1}^c$}(|O_v|-1)\vec{t}_{\Right}'(v)}$.
\end{enumerate}
\end{lemma}

\begin{proof}

\noindent{}(1). 
Denote $h_p:=h_R^{\bfs_{i_p}^{\vec{t}_p}\text{\tiny$\cdots$}\bfs_{i_1}^{\vec{t}_1}} 
= \bfs_{i_p}^{\vec{t}_p}\text{\tiny$\cdots$}\bfs_{i_1}^{\vec{t}_1}h_R(\bfs_{i_p}^{\vec{t}_p}\text{\tiny$\cdots$}\bfs_{i_1}^{\vec{t}_1})^{-1}$ and $W_p:=\bfs_{i_p}^{\vec{t}_p}\text{\tiny$\cdots$}\bfs_{i_1}^{\vec{t}_1}W$.
Then $W_p=\ker(h_p-\lambda I_N)$.
Recall that $\mon(\epsilon)=\epsilon_{\Right}\bfB_{i_{\ell}}^{\vec{t}_{\ell}}(\epsilon_{\ell})\text{\tiny$\cdots$}\bfB_{i_1}^{\vec{t}_1}(\epsilon_1)$ and 
$\bfB_{i_p}^{\vec{t}_p}(\epsilon_p)=\bfs_{i_p}^{\vec{t}_p}\bfH_{i_p}^{\vec{t}_p}(\epsilon_p)$. 
We may write $\epsilon_{\Right}=D(\epsilon_{\Right})u_{\Right}$, $u_{\Right}\in N_{\vec{t}_R}=N(P_{\vec{t}_R})$.
By definition, $\epsilon=h_{\Right}\cdot\epsilon$ means the following equality in $\underline{\FBD}_{n,N}$:
\begin{eqnarray*}
&&[\epsilon_{\Right}]^{\vec{t}_{\ell,\Right}} \circ [\bfB_{i_{\ell}}^{\vec{t}_{\ell}}(\epsilon_{\ell})] \circ \text{\tiny$\cdots$}\circ [\bfB_{i_1}^{\vec{t}_1}(\epsilon_1)]
=[h_{\Right}]^{\vec{t}_{\ell,\Right}}\circ [\epsilon_{\Right}]^{\vec{t}_{\ell,\Right}} \circ [\bfB_{i_{\ell}}^{\vec{t}_{\ell}}(\epsilon_{\ell})] \circ \text{\tiny$\cdots$} \circ [\bfB_{i_1}^{\vec{t}_1}(\epsilon_1)] \circ [h_{\Right}^{-1}]^{\vec{t}_{1,\Left}}\\
&=&[h_{\Right}\epsilon_{\Right}(h_{\Right}^{\bfs(\beta^{\vec{t}})})^{-1}]^{\vec{t}_{\ell,\Right}} \circ \sigma_{i_{\ell}}^{\vec{t}_{\ell}}\circ [h_{\ell-1}\bfH_{i_{\ell}}^{\vec{t}_{\ell}}(\epsilon_{\ell})h_{\ell-1}^{-1}]^{\vec{t}_{\ell,\Left}} 
\circ \text{\tiny$\cdots$}\circ \sigma_{i_1}^{\vec{t}_1}\circ [h_0\bfH_{i_1}^{\vec{t}_{1,\Left}}(\epsilon_1)h_0^{-1}]^{\vec{t}_{1,\Left}}.
\end{eqnarray*}
Equivalently, $\bfH_{i_p}^{\vec{t}_{p,\Left}}(\epsilon_p) = h_{p-1}\bfH_{i_p}^{\vec{t}_{p,\Left}}(\epsilon_p)h_{p-1}^{-1}$, $N(\epsilon_{\Right}) = h_{\ell}N(\epsilon_{\Right})h_{\ell}^{-1}$, and
$D(\epsilon_{\Right}) = h_{\Right}D(\epsilon_{\Right})h_{\ell}^{-1}$.
In particular, $D(\epsilon_R)\bfs(\beta^{\vec{t}})$ commutes with $h_{\Right}$, hence preserves $W$.

As $\bfH_{i_1}^{\vec{t}_{1,\Left}}(\epsilon_1) = h_{\Right}\bfH_{i_1}^{\vec{t}_{1,\Left}}(\epsilon_1)h_{\Right}^{-1}$, the unipotent matrix $\bfH_{i_1}^{\vec{t}_{1,\Left}}(\epsilon_1)$ preserves $W$. Then, we have
\begin{eqnarray*}
\det(\mon(\epsilon)|_W) 
&=& \det [(\epsilon_{\Right}\bfB_{i_{\ell}}^{\vec{t}_{\ell}}(\epsilon_{\ell})\text{\tiny$\cdots$}\bfB_{i_2}^{\vec{t}_2}(\epsilon_2)\bfs_{i_1}^{\vec{t}_1})|_W] \cdot \det (\bfH_{i_1}^{\vec{t}_{1,\Left}}(\epsilon_1)|_W)
= \det [(\epsilon_{\Right}\bfB_{i_{\ell}}^{\vec{t}_{\ell}}(\epsilon_{\ell})\text{\tiny$\cdots$}\bfB_{i_2}^{\vec{t}_2}(\epsilon_2)\bfs_{i_1}^{\vec{t}_1})|_W]\\
&=& \det [(\bfs_{i_1}^{\vec{t}_1}\epsilon_{\Right}\bfB_{i_{\ell}}^{\vec{t}_{\ell}}(\epsilon_{\ell})\text{\tiny$\cdots$}\bfB_{i_2}^{\vec{t}_2}(\epsilon_2))|_{W_1}].
\end{eqnarray*}
Now, we repeat the argument inductively to obtain
\[
\det(\mon(\epsilon)|_W)  =  \det [(\bfs_{i_1}^{\vec{t}_1}\epsilon_{\Right}\bfB_{i_{\ell}}^{\vec{t}_{\ell}}(\epsilon_{\ell})\text{\tiny$\cdots$}\bfB_{i_2}^{\vec{t}_2}(\epsilon_2))|_{W_1}] 
= \text{\tiny$\cdots$} = \det[(\bfs(\beta^{\vec{t}})\epsilon_{\Right})|_{W_{\ell}}].
\]
Finally, as $N(\epsilon_{\Right}) = h_{\ell}N(\epsilon_{\Right})h_{\ell}^{-1}$, the unipotent matrix $N(\epsilon_{\Right})$ preserves $W_{\ell}$. Then, we have
\[
\det(\mon(\epsilon)|_W)  =  \det [(\bfs(\beta^{\vec{t}})D(\epsilon_{\Right}))|_{W_{\ell}}] \cdot \det (N(\epsilon_{\Right})|_{W_{\ell}})
=  \det [(\bfs(\beta^{\vec{t}})D(\epsilon_{\Right}))|_{\bfs(\beta^{\vec{t}})W}].
\]
The rest is clear.

\noindent{}(2). Suppose $\vec{C}_{\Right}:=D(\epsilon_{\Right})\isomorphic (C_v)_{v=1}^c =\vec{C}$ is reduced. Recall by (\ref{eqn:reduced_microlocal_monodromy}), this means that $C_{\Right}^j=\id$ for all $j\in O_v\setminus\{o_v\}$ and $C_{\Right}^{o_v}=C_v\in GL(\vec{t}_{\Right}(o_v))$. In particular, $C_{\Right}^j=\id$ on $W^j\hookrightarrow \field^N$ for all $j\in O_v\setminus\{o_v\}$. 
$\vec{h}_{\Right}=(h_{\Right}^j)_{j=1}^n \in Z(\vec{C}_{\Right})$ as in (\ref{eqn:reduced_stablizer}): $h_{\Right}^j=h_{\Right}^{v}, \forall j\in O_v$, $h_{\Right}^v\in Z(C_v)\subset GL(\vec{t}_{\Right}(v))$.
In particular, $C_v$ preserves $W^{o_v}$.
Observe that $\bfs(\beta^{\vec{t}})(h_{\Right}^j)_{j=1}^n\bfs(\beta^{\vec{t}})^{-1} = (h_{\Right}^{\permutation(\beta)(j)})_{j=1}^n$, it follows that $\bfs(\beta^{\vec{t}})$ acts by cyclic rotation on $\oplus_{j\in O_v}W^j$ for each $v$, in particular preserves $W$. The rest is clear.
\end{proof}

\section{Cell decomposition of Betti moduli spaces over $(\mathbb{C}P^1,\infty)$}\label{sec:Betti_moduli_spaces_over_punctured_P1_and_braid_varieties}

Now, we focus on the case $(C,\{p_i\},\{\Lambda_i\})=(\mathbb{C}P^1,\{\infty\},\{\beta_1^{\circ}\})$ with $\beta_1=\Delta_n\beta\Delta_n\in\FBr_n^+$ and $\vec{r}=\vec{r}_1=1:\pi_0(\beta_1^{\circ})\rightarrow\mathbb{N}$.
So, the total rank of $\vec{r}$ is $N=n$, and $G=GL(N)=GL(n)$.
Recall that  we identify $\beta_{1,\Left}=\beta_{1,\Right}$ with $[n]$, and the orbits of $\permutation(\beta_1)\in S_n$ induces a partition
$
[n] = \sqcup_{v=1}^cO_v,
$
corresponding to the connected components of $\beta_1^{\circ}$. So, $c$ is the number of connected components. $o_v\in O_v$ denotes the least element of $O_v$.

Denote 
\[
\modulistack_1=\modulistack_1(\beta_1^{\circ}):=\modulistack_1(\mathbb{C}P^1,\{\infty\},\{\beta_1^{\circ}\});
\quad \modulistack_L=\modulistack_L(\beta_1^{\circ}):=\modulistack_L(\mathbb{C}P^1,\{\infty\},\{\beta_1^{\circ}\}).
\]
For $*=1,L$, let $\modulispace_*$ denote the good moduli space associated to $\modulistack_*$. 
Now, let $L$ correspond to an $\permutation(\beta_1)$-twisted conjugacy class of $\vec{C}_\Right$ in $GL(\vec{r}_{\Right})=T$.
Recall that 
\[
G_{\vec{r},1} = G_{1,1} = \{\vec{C}_{\Right}\in GL(\vec{r}_{\Right})=T: \det \vec{C}_{\Right} = \det \permutation(\beta_1) = (-1)^{n-c}\}.
\]
We \textbf{assume} that $G_{1,1}\ni \vec{C}_{\Right}\isomorphic (C_v)_{v=1}^c=:\vec{C}$ is \emph{reduced} and \emph{generic}:
$\vec{C}_{\Right} = (C_{\Right}^j)_{j=1}^n$ with
$C_{\Right}^j=1, \forall j\in [n]\setminus\{o_v:1\leq v\leq c\}$, and $C_{\Right}^{o_v}=C_v\in\field^{\times}$;
$\text{\tiny$\prod_{v=1}^c$}C_v = (-1)^{n-c}$; and for any nonempty proper subset $I\subset[c]$, we have
\[
\text{\tiny$\prod_{v\in I}$}C_v \neq (-1)^{\text{\tiny$\sum_{v\in I}$}(|O_v|-1)}.
\]

For any $w\in S_n$ and $\gamma\in\FBr_n^+$, define the \emph{modified braid variety} with microlocal monodromy:
\begin{equation}\label{eqn:modified_braid_variety}
X(\gamma;w):=\{(\epsilon_{\Right},\vec{\epsilon})\in B\times\mathbb{A}^{\ell(\gamma)}:\epsilon_{\Right}\mB_{\gamma}(\vec{\epsilon})w=\id\}\xrightarrow[]{\mumon} T: (\epsilon_{\Right},\vec{\epsilon})\mapsto D(\epsilon_{\Right}).
\end{equation}
Denote $X(\gamma,\vec{C}_{\Right};w):=\mumon^{-1}(\vec{C}_{\Right})$ (\emph{restricted modified braid variety}).
Denote $\Delta:=\Delta_n$. Recall that
\[
R_{\vec{r}}=R_1 = \{x\in G, \epsilon\in\hat{X}_1(\Delta\beta;w_0): x\mon(\epsilon)x^{-1}=\id\} = G \times X(\Delta\beta;w_0) \xrightarrow[]{\mu} G_{\vec{r},1},
\]
where $\mu(x,\epsilon=(\epsilon_{\Right},\vec{\epsilon})) = \mumon(\epsilon)=D(\epsilon_{\Right})$. 
By Corollary \ref{cor:restricted_Betti_moduli_stack_via_braids}, \ref{cor:restricted_Betti_moduli_space_via_braids}, we have
\begin{equation*}
\modulistack_1\simeq [R_1/(G\times GL(\vec{r}_{\Right}))] \simeq [X(\Delta\beta;w_0)/T];
\quad \modulistack_L\simeq [\mu^{-1}(\vec{C}_{\Right})/(G\times Z(\vec{C}_{\Right}))] \simeq [X(\Delta\beta,\vec{C}_{\Right};w_0)/Z(\vec{C}_{\Right})].
\end{equation*}
Recall that $Z(\vec{C}_{\Right})=\{(h_{\Right}^j)_{j=1}^n\in T: h_{\Right}^j=h_{\Right}^{o_v}\in\field^{\times}, \forall j\in O_v\}\isomorphic (\field^{\times})^c$.
By Lemma \ref{lem:smoothness_of_representation_varieties}, $R_1$ and hence $X(\Delta\beta;w_0)=R_1/G$ is smooth affine and connected.
By Lemma \ref{lem:generic_condition_implies_free_action}, $PZ:=Z(\vec{C}_{\Right})/\field^{\times}\isomorphic (\field^{\times})^{c-1}$ acts freely on $X(\Delta\beta,\vec{C}_{\Right};w_0)$.
Here, $\field$ is an algebraically closed field of characteristic $0$.

\begin{proposition}[smoothness and cell decomposition of Betti moduli spaces]\label{prop:cell_decomposition}
Let $G_{1,1}\ni \vec{C}_{\Right}\isomorphic \vec{C}$ be reduced and generic. Suppose that the Betti moduli space 
$\modulispace_B=\modulispace_L$ is nonempty, then: 
\begin{enumerate}[wide,labelwidth=!,labelindent=0pt]
\item
$\vec{C}_{\Right}$ is a regular value of $\mu$ and $\modulispace_B$ is a smooth affine and connected variety 
of dimension $d=d_{\beta_1}:=\ell(\beta_1)-2\ell_0-n-c+2=\ell(\beta)-n-c+2$ over $\field$;

\item
There exists a cell decomposition into locally closed subvarieties
\[
\modulispace_B(\field)=\sqcup_{p\in\cW^*(\beta_1)} \modulispace_B^p(\field)
\]
such that 
\begin{enumerate}
\item
For each $p\in\cW^*(\beta_1)$, we have a natural isomorphism
\[
\modulispace_B^p(\field)\isomorphic (\field^*)^{a(p)}\times\field^{b(p)},\quad (a(p)=|S_p|-n-c+2,~~b(p)=|U_p|-\ell_0)
\]
such that $d=a(p)+2b(p)$.

\item
$\exists!p_m\in\cW^*(\beta_1)$ such that $b(p_m)=0$. In addition, $\modulispace_B^{p_m}(\field)$ is open and dense in $\modulispace_B(\field)$, and
\[
\modulispace_B^{p_m}(\field)\isomorphic (\field^*)^{a(p_m)}=(\field^*)^d.
\]
\end{enumerate}
\end{enumerate}
\end{proposition}

This will be a direct consequence of the cell decomposition of braid varieties, which we discuss below.

\begin{remark}
The moduli space $\modulispace_1$ has been studied in \cite[\S 6]{STZ17}. However, it's the restricted version $\modulispace_L$ that generalizes the Betti moduli space in nonabelian Hodge theory, P=W conjectures, etc. See \cite{HLRV11,Boa14,BY15} for the cases of character varieties.
On the other hand, if $c=1$, i.e. $\Lambda=\beta_1^{\circ}=(\Delta\beta\Delta)^{\circ}$ is connected, then $G_{\vec{r},1}=G_{1,1}=\{\vec{C}_{\Right}=\diag((-1)^{n-1},1,\text{\tiny$\cdots$},1)\}$, and we get a coincidence
$\modulispace_B=\modulispace_L=\modulispace_1$.
\end{remark}

\subsection{Cell decomposition of braid varieties}\label{subsec:braid_varieties}

By a little abuse of notations, fix any $\beta_1=\sigma_{i_{\ell}}\cdots\sigma_{i_1}\in\FBr_n^+$. For each $1\leq j\leq \ell$, denote
\begin{equation}\label{eqn:canonical_morphism_for_braid_matrix_with_coefficients}
f_j:\mathbb{A}^j \rightarrow G: (\epsilon_j,\cdots,\epsilon_1)\mapsto \mB_{i_j}(\epsilon_j)\cdots\mB_{i_1}(\epsilon_1).
\end{equation}
Recall that the braid variety in Remark \ref{rem:braid_varieties} is $X(\beta_1)\isomorphic f_{\ell}^{-1}(B)$. 
Define
\begin{equation}
p:\mathbb{A}^{\ell}\rightarrow W^{\ell+1}:\vec{\epsilon}\mapsto p(\vec{\epsilon})=(p_{\ell},\text{\tiny$\cdots$},p_0),\quad \mB_{i_m}(\epsilon_m)\text{\tiny$\cdots$} \mB_{i_1}(\epsilon_1)\in Bp_mB.
\end{equation}

\begin{definition}[{\cite[\S.5.4]{Mel19}}]\label{def:walks}
Let $p=(p_{\ell},\text{\tiny$\cdots$},p_0)\in W^{\ell+1}$. If for any \emph{position} $1\leq m\leq \ell$:
\begin{eqnarray*}
p_m=\left\{\begin{array}{cc}
\ms_{i_m}p_{m-1}~(\text{go-up})& \text{if } \ms_{i_m}p_{m-1}>p_{m-1},\\
\ms_{i_m}p_{m-1}~(\text{go-down}) \text{ or } p_{m-1}~(\text{stay}) & \text{ if } \ms_{i_m}p_{m-1}<p_{m-1},
\end{array}\right.
\end{eqnarray*}
and $p_0=p_{\ell}=\id$, we say $p$ is a \emph{walk} of $\beta_1$. Denote:
{\small\begin{equation*}
U_p:=\{\text{go-up's}\},~~S_p:=\{\text{stays}\},~~D_p:=\{\text{go-down's}\}. \Rightarrow [\ell]=\{1,\text{\tiny$\cdots$},\ell\}= U_p\sqcup D_p\sqcup S_p.
\end{equation*}}
By a length count: $|U_p|-|D_p|=\ell(p_{\ell})-\ell(p_0)=0$. Denote $\cW(\beta_1):=\{\text{walks of } \beta_1\}$.
\end{definition}

\noindent{}\textbf{Convention} \setword{{\color{blue}$2$}}{convention:group_action}: If the context is clear, for a group action of $h\in H$ on any variety $Y\ni y$, denote
\begin{equation}\label{eqn:convention_for_an_action}
\hat{y}:=h\cdot y.
\end{equation}

For any $1\leq m\leq \ell=\ell(\beta_1)$, denote
\begin{equation}
\ms_{<m}(\beta_1):=\text{\tiny$\prod_{q=m-1}^1$} \ms_{i_q},\quad \ms_{>m}(\beta_1):= \text{\tiny$\prod_{q=\ell}^{m+1}$} \ms_{i_q}.
\end{equation}
We use \textbf{Convention} \ref{convention:group_action}. Recall that $x^y=yxy^{-1}$ in $G$, and we write {\small$t=\diag(t_1,\text{\tiny$\cdots$},t_n)\in T$}. 
\begin{proposition}\label{prop:cell_decomposition_of_braid_varieties}
We have \emph{$B$-equivariant} decompositions into locally closed subvarieties:
\begin{eqnarray}\label{eqn:cell_decomposition_of_braid_varieties}
&&X(\beta_1)=\sqcup_{p\in\cW(\beta_1)} X_p(\beta_1),\quad \varphi:X_p(\beta_1):=X_p^{\ell}(\beta_1)\isomorphic (\field^{\times})^{|S_p|}\times \field^{|U_p|}:\vec{\epsilon}\mapsto (\epsilon_m')_{m\in S_p\sqcup U_p},\\
&&X(\beta_1,C)=\sqcup_{p\in\cW(\beta_1)} X_p(\beta_1,C),\quad X_p(\beta_1,C):=X_p(\beta_1)\cap X(\beta_1,C),\nonumber
\end{eqnarray}
such that
\begin{enumerate}[wide,labelwidth=!,labelindent=0pt]
\item
The inherited action of $b\in B$ on $(\epsilon_m')_{m\in S_p\sqcup U_p}\in (\field^{\times})^{|S_p|}\times \field^{|U_p|}$ satisfies:
\begin{enumerate}
\item
If $b=u\in U\subset B$, then
$
\hat{\epsilon}_m'=\epsilon_m', \forall m\in S_p.
$

\item
If $b=t\in T\subset B$, then
\begin{equation}
\hat{\epsilon}_m'=(t^{p_{m-1}})_{i_m}(t^{p_{m-1}})_{i_m+1}^{-1}\epsilon_m', \forall m\in S_p\sqcup U_p.
\end{equation}
\end{enumerate}

\item
$\mumon:X_p(\beta_1)\rightarrow T$ is identified with
\begin{equation}\label{eqn:formula_for_mumon}
\mumon((\epsilon_m')_{m\in S_p\sqcup U_p})=\mumon((\epsilon_m')_{m\in S_p})= \text{\tiny$\prod_{m\in S_p}$} (\mK_{i_m}(-\epsilon_m'^{-1})\mK_{i_m+1}(\epsilon_m'))^{\ms_{>m}(\beta_1)}.
\end{equation}
In particular, $\det(\mumon((\epsilon_m')_{m\in S_p\sqcup U_p}))=(-1)^{|S_p|}=(-1)^{\ell}$.
\end{enumerate}
\end{proposition}

The cell decomposition goes back to \cite{Mel19}, see also \cite{CGGS20}. For a detailed proof of the above proposition, see \cite[Prop.1.16]{Su23}.
Here we only sketch the main points.

\begin{proof}[Sketch of proof]
The proof is done by diagram calculus.
Let $p\in\cW(\beta_1)$, $0\leq m\leq\ell$. Define a closed subvariety of $\mathbb{A}^m$ and subsets of $[\ell]$:
\begin{eqnarray}
&&X_p^m(\beta_1):=\cap_{1\leq j\leq m}f_j^{-1}(Bp_jB)\subset\mathbb{A}^m,\quad X_p^0(\beta_1)=\pt.\\
&&U_p^m:=U_p\cap[m],~~S_p^m:=S_p\cap[m],~~D_p^m:=D_p\cap[m].~~\Rightarrow~~ [m]= U_p^m\sqcup D_p^m\sqcup S_p^m.\nonumber
\end{eqnarray}
Then the decomposition $X(\beta_1)=\sqcup_{\in\cW(\beta_1)}X_p(\beta_1)$, etc. and the structure of each cell $X_p(\beta_1)$ are established by induction via the following lemma:
\begin{lemma}\label{lem:inductive_cells_for_braid_varieties}
We have $p(X(\beta_1))\subset\cW(\beta_1)$. Moreover, for any $p\in\cW(\beta_1)$ and $1\leq m\leq \ell$,
\begin{eqnarray*}
X_p^m(\beta_1)\isomorphic\left\{\begin{array}{ll}
\field_{\epsilon_m'}\times X_p^{m-1}(\beta_1) & \text{if $p_m=\ms_{i_m}p_{m-1}>p_{m-1}$\quad (go-up)};\\
X_p^{m-1}(\beta_1) & \text{if $p_m=\ms_{i_m}p_{m-1}<p_{m-1}$\quad (go-down)};\\
\field_{\epsilon_m'}^{\times}\times X_p^{m-1}(\beta_1) & \text{if $\ms_{i_m}p_{m-1}<p_{m-1}$ and $p_m=p_{m-1}$ \quad (stay)}.
\end{array}\right.
\end{eqnarray*}
\end{lemma}
The rest is a straightforward application of diagram calculus.
\end{proof}

\begin{corollary}\label{cor:free_action_vs_image}
In Proposition \ref{prop:cell_decomposition_of_braid_varieties}, for each $p\in\cW(\beta_1)$, $PT:=T/\field^{\times}$ acts freely on $X_p(\beta_1)$ if and only if 
$\mumon(X_p(\beta_1))=G_{1,1}=\{(\vec{C}_{\Right})\in T: \det\vec{C}_{\Right} = (-1)^{\ell}\}$.
\end{corollary}

\begin{proof}
Recall $p=(p_{\ell}=\id,\text{\tiny$\cdots$},p_1,p_0=\id)\in W^{\ell+1}$.
Define 
\[
J_p:=\{\underline{\permutation}_{i_m}:=p_{m-1}^{-1}\permutation_{i_m}p_{m-1}, m\in S_p\};\quad \tilde{J}_p:=\{\permutation_{>m}(\beta_1)\permutation_{i_m}\permutation_{>m}(\beta_1)^{-1}, m\in S_p\}.
\]
For any subset $I\subset W=S_n$, let $\langle I\rangle$ denote the subgroup generated by $I$, and
\[
\overline{\langle I \rangle}:=\langle (a~\tau(a)), \forall a\in[n], \tau\in I\rangle \subset S_n.
\]
Also, denote $J_p':=\{\permutation_{<m}^{-1}\permutation_{i_m}\permutation_{<m}(\beta_1):m\in S_p\} = \permutation(\beta_1)^{-1}\tilde{J}_p\permutation(\beta_1)$.
By Proposition \ref{prop:cell_decomposition_of_braid_varieties}.($1.b$), $PT:=T/\field^{\times}$ acts freely on $X_p(\beta_1)$ if and only if $\overline{\langle J_p \rangle}=S_n$.
By Proposition \ref{prop:cell_decomposition_of_braid_varieties}.($1.c$), $\mumon(X_p(\beta_1))=G_{1,1}$ if and only if 
$\overline{\langle \tilde{J}_p \rangle}=S_n \Leftrightarrow \overline{\langle J_p' \rangle}=S_n$.
By \cite[(2.4.18)]{Su23}, $\overline{\langle J_p \rangle} = \overline{\langle J_p' \rangle}$. 
We're done.
\end{proof}

Now, we go back to the proof of Proposition \ref{prop:cell_decomposition}.

\begin{proof}[Proof of Proposition \ref{prop:cell_decomposition}.]
Recall that $\beta_1=\Delta\beta\Delta$. 
Recall that $X(\Delta\beta;w_0)=\{\vec{\epsilon}=(\epsilon_{\ell},\text{\tiny$\cdots$},\epsilon_1)\in X(\beta_1): \epsilon_1=\text{\tiny$\cdots$}=\epsilon_{\ell_0}=0\}$, where $\ell_0=\ell(w_0)$.
Define $X_p(\Delta\beta;w_0)$ and $X_p(\Delta\beta,\vec{C}_{\Right};w_0)$ accordingly.
By definition, for any $p\in\cW(\beta_1)$, we must have $\{1,\text{\tiny$\cdots$},\ell_0\}\subset U_p$ and $\{\ell,\text{\tiny$\cdots$},\ell-\ell_0+1\}\subset D_p$.
Then by Proposition \ref{prop:cell_decomposition_of_braid_varieties}, we obtain a cell decomposition
\[
X(\Delta\beta;w_0)=\sqcup_{p\in\cW(\beta_1)} X_p(\Delta\beta;w_0),\quad X_p(\Delta\beta;w_0)\isomorphic (\field^{\times})^{|S_p|} \times \field^{|U_p|-\ell_0},
\]
and $X_p(\beta_1)= X_p(\Delta\beta;w_0)\times \field^{\ell_0}$.
By Proposition \ref{prop:cell_decomposition}.(2), 
$X_p(\beta_1)\cap X(\beta_1,\vec{C}_{\Right})=(X_p(\Delta\beta;w_0)\cap X(\Delta\beta,\vec{C}_{\Right};w_0))\times\field^{\ell_0}$.
Denote $\cW^*(\beta_1):=\{p\in\cW(\beta_1): X_p(\beta_1)\cap X(\beta_1,\vec{C}_{\Right})\neq\emptyset\}$. Then, we obtain a cell decomposition
\[
X(\Delta\beta,\vec{C}_{\Right};w_0)=\sqcup_{p\in\cW^*(\beta_1)} X_p(\Delta\beta,\vec{C}_{\Right};w_0).
\]
By Lemma \ref{lem:generic_condition_implies_free_action}, $PZ:=Z(\vec{C}_{\Right})/\field^{\times}$ acts freely on $X(\Delta\beta,\vec{C}_{\Right};w_0)$, hence $PT:=T/\field^{\times}$ acts freely on $\mumon^{-1}(T\cdot \vec{C}_{\Right})$. So by Proposition \ref{prop:cell_decomposition_of_braid_varieties}.($1.b$), for any $p\in\cW^*(\beta_1)$, $PT$ acts freely on $X_p(\beta_1)$ and $X_p(\Delta\beta;w_0)$, hence $\mumon(X_p(\beta_1))=\mumon(X_p(\Delta\beta;w_0))=G_{1,1}\isomorphic (\field^{\times})^{n-1}$ by Corollary \ref{cor:free_action_vs_image}. It follows that $\vec{C}_{\Right}$ is a \emph{regular value} of
$
\mumon: X(\Delta\beta;w_0) (=\sqcup_{p\in\cW^*(\beta_1)}X_p(\Delta\beta;w_0)) \rightarrow G_{1,1},
$
and hence $X(\Delta\beta,\vec{C}_{\Right};w_0)$ is \emph{smooth affine of pure dimension}. Moreover,
\[
 X_p(\Delta\beta,\vec{C}_{\Right};w_0) \isomorphic (\field^{\times})^{|S_p|-n+1}\times \field^{|U_p|-\ell_0}, \forall p\in \cW^*(\beta_1).
\]

Define $p_m\in W^{\ell+1}$ so that  $S_{p_m}=[\ell]-\{1,\text{\tiny$\cdots$},\ell_0,\ell-\ell_0+1,\text{\tiny$\cdots$},\ell\}$ is maximal. Clearly, $p_m\in\cW(\beta_1)$.
By Proposition \ref{prop:cell_decomposition_of_braid_varieties}.(2), $\mumon(X_{p_m}(\beta_1))\subset G_{1,1}$ has maximal dimension. As $X(\Delta\beta;w_0)\neq\emptyset$, there exists $p\in \cW^*(\beta_1)\neq \emptyset$,
so $\mumon(X_p(\beta_1))=G_{1,1}$. Thus, we must have $\mumon(X_{p_m}(\beta_1))= G_{1,1}$. In particular, $p_m\in\cW^*(\beta_1)$.
Now, (2) follows by passing to the quotient $\modulispace_B=X(\Delta\beta,\vec{C}_{\Right};w_0)/PZ$. 
For $(1)$, we have seen that $\vec{C}_{\Right}$ is a regular value of $\mu$, and $\modulispace_B=X(\Delta\beta,\vec{C}_{\Right};w_0)/PZ$ is smooth affine of pure dimension.
But in the cell decomposition of $\modulispace_B$, there exists a unique maximal cell $\modulispace_B^{p_m}:=X_{p_m}(\Delta\beta,\vec{C}_{\Right};w_0)/PZ\isomorphic (\field^{\times})^{|S_{p_m}|-n-c+2}=(\field^{\times})^{\ell-2\ell_0-n-c+2}$. This means that $\modulispace_B$ is connected of dimension $d=\ell-2\ell_0-n-c+2$.
This completes the proof.
\end{proof}

\begin{remark}\label{rem:connection_with_augmentations}
For any $\beta_1=\Delta\beta\Delta\in\Br_n^+$, we have seen that
$\modulistack_1\isomorphic [X(\Delta\beta;w_0)/T]$.
In fact, $\modulistack_1$ (or $\modulispace_1$) is also related to some other moduli stacks/spaces.

\begin{enumerate}[wide,labelwidth=!,labelindent=0pt]
\item
One can form the \emph{rainbow closure $\beta^{>}$} as in Figure \ref{fig:braid_closure}: a Legendrian link in $J^1\mathbb{R}_x$ whose front projection is the rainbow closure $\beta^{>}$ of $\beta$, which is equipped with the standard binary Maslov potential (zero values on all the strands in $\beta$). Here, $J^1\mathbb{R}_x=T^*\mathbb{R}_x\times\mathbb{R}_z=\mathbb{R}_{x,y,z}^3$ is identified with the standard contact three-space with contact $1$-form $\alpha=dz-ydx$. Similar to $\modulistack_r=\modulistack_r(\beta_1^{\circ})$, one can define a moduli stack 
$\modulistack_r(\beta^{>})=\modulistack_r(\mathbb{R}_{x,z}^2,\{\infty\},\{\beta^{>}\})$, which consists of microlocal rank $r$ (along $\beta^>$) constructible sheaves with compact support on $\mathbb{R}_{x,z}^2$.
Similar to the case of cylindrical closures, by a local study of the micro-support conditions, one can prove a natural isomorphism
\[
\modulistack_r(\beta_1^{\circ})\isomorphic \modulistack_r(\beta^{>}).
\]
For example, when $\beta=\sigma_1^3\in\Br_2^+$, so $\beta_1=\sigma_1^5$, both $\modulispace_1=\modulispace_1(\beta_1^{\circ})$ and $\modulispace_1(\beta^{>})$ are identified with the moduli space of $5$ one-dimensional subspaces  in $\field^2$ which are cyclically consecutive transversal.

\begin{figure}[!htbp]
\begin{center}
\begin{tikzcd}[column sep=5pc,ampersand replacement=\&]
\begin{tikzpicture}[baseline=-.5ex,scale=0.5]
\begin{scope}
\draw[thick] (0,0) to[out=0,in=180] (2,1);
\draw[white, fill=white] (1,0.5) circle (0.1);
\draw[thick] (0,1) to[out=0,in=180] (2,0) to[out=0,in=180] (4,1);
\draw[white, fill=white] (3,0.5) circle (0.2);
\draw[thick] (2,1) to[out=0,in=180] (4,0) to[out=0,in=180] (6,1);
\draw[white, fill=white] (5,0.5) circle (0.2);
\draw[thick] (4,1) to[out=0,in=180] (6,0);
\draw (3,-1.5) node[below] {$n=2, \beta=\sigma_1^3$};
\end{scope}
\end{tikzpicture}
\arrow[r,"{\tiny rainbow~closure}"]
\&
\begin{tikzpicture}[baseline=-.5ex,scale=0.5]
\begin{scope}
\draw[thick] (2,-1) to[out=0,in=180] (4,0);
\draw[white, fill=white] (3,-0.5) circle (0.2);
\draw[thick] (1,0.5) to[out=0,in=180] (4,-1) to[out=0,in=180] (6,0);
\draw[white, fill=white] (5,-0.5) circle (0.2);
\draw[thick] (4,0) to[out=0,in=180] (6,-1) to[out=0,in=180] (9,0.5);
\draw[white, fill=white] (7,-0.5) circle (0.2);
\draw[thick] (6,0) to[out=0,in=180] (8,-1);
\draw[thick] (-1,0.5) to[out=0,in=180] (2,-1);
\draw[thick] (8,-1) to[out=0,in=180] (11,0.5);

\draw[thick] (1,0.5) to[out=0,in=180] (5,2) to[out=0,in=180] (9,0.5);
\draw[thick] (-1,0.5) to[out=0,in=180] (5,3) to[out=0,in=180] (11,0.5);
\draw (5,-1.5) node[below] {$\beta^>\hookrightarrow \mathbb{R}_{x,z}^2$};
\end{scope}
\end{tikzpicture}
\end{tikzcd}
\end{center}
\caption{Rainbow closure: $n=2,\beta=\sigma_1^3$.}
\label{fig:braid_closure}
\end{figure}

\item
In contact geometry, one can associate to the Legendrian link $\beta^{>}\hookrightarrow (J^1\mathbb{R}_x,dz-ydx)$ a DGA, 
called Chekanov-Eliashberg (or Legendrian Contact Homology) DGA, as in \cite[\S 2.2]{NRSSZ20}. See also \cite{Eli98,Che02,ENS01}.
If we place $n$ base points $*_1,\text{\tiny$\cdots$},*_n$ on the right cusps of $\beta^{>}$, then the associated ($\mathbb{Z}$-graded) DGA $\cA=\cA(\beta^{>},*_1,\text{\tiny$\cdots$},*_n)$ 
is generated over $\mathbb{Z}\langle t_1^{\pm 1},\text{\tiny$\cdots$},t_n^{\pm 1}\rangle$ by Reeb chords of $\beta^{>}$. The differential $\differential$ of $\cA$ counts holomorphic disks in the symplectization $\mathbb{R}_t\times\mathbb{R}_{x,y,z}^3$ with boundary along the Lagrangian cylinder $\mathbb{R}_t\times \beta^{>}$, with one boundary puncture limiting a Reeb chord at $+\infty$ and several boundary punctures limiting Reeb chords at $-\infty$. For example, if $\beta=\sigma_1^3$, then
\begin{eqnarray*}
&&\mathcal{A}(\Lambda)=\mathbb{Z}\langle t_1^{\pm 1}, t_2^{\pm 1}, a_1,a_2,a_3, c_1, c_2\rangle,\quad |t_i^{\pm 1}|=0, |a_i|=0, |c_i|=1;\quad \partial t_i=\partial t_i^{-1}=0,\quad\partial a_i=0;\\
&&\partial c_1=t_1^{-1}+a_1+a_3+a_1a_2a_3;\quad \partial c_2=t_2^{-1}+a_2+(1+a_2a_3)t_1(1+a_1a_2).
\end{eqnarray*}
See Figure \ref{fig:Legendrian_trefoil_knot} for an illustration.
The associated \emph{augmentation variety} $\aug(\beta^{>},*_1,\text{\tiny$\cdots$},*_n)$ is the set of $\mathbb{Z}$-graded DGA maps $\epsilon:(\cA,\partial)\rightarrow (\field,0)$.
It's equipped with a natural action of the torus $T=(\field^{\times})^n$.
In the example of $\beta=\sigma_1^3\in\Br_2^+$, we have
\begin{eqnarray*}
\aug(\Lambda,\overrightarrow{*};\field)&\isomorphic&\{(\epsilon(t_1),\epsilon(t_2),\epsilon(a_1),\epsilon(a_2),\epsilon(a_3))\in(\field^*)^2\times\field^3|\\
&&\epsilon(t_1)^{-1}+\epsilon(a_1)+\epsilon(a_3)+\epsilon(a_1)\epsilon(a_2)\epsilon(a_3)=0, \epsilon(t_2)=-\epsilon(t_1)^{-1}\}.
\end{eqnarray*}
And, the torus action is: $((\lambda_1,\lambda_2),\epsilon)\in\mathbb{G}_m^2\times\aug(\Lambda;\field)\mapsto (\lambda_1,\lambda_2)\cdot\epsilon\in\aug(\Lambda;\field)$ with
\begin{eqnarray*}
&&((\lambda_1,\lambda_2)\cdot \epsilon)(t_1)=\lambda_2^{-1}\epsilon(t_1)\lambda_1, ((\lambda_1,\lambda_2)\cdot \epsilon)(t_2)=\lambda_1^{-1}\epsilon(t_2)\lambda_2.\\
&&((\lambda_1,\lambda_2)\cdot \epsilon)(a_1)=\lambda_1^{-1}\epsilon(a_1)\lambda_2, ((\lambda_1,\lambda_2)\cdot \epsilon)(a_2)=\lambda_2^{-1}\epsilon(a_2)\lambda_1, ((\lambda_1,\lambda_2)\cdot \epsilon)(a_3)=\lambda_1^{-1}\epsilon(a_3)\lambda_2.
\end{eqnarray*}
By ``augmentations are sheaves'' \cite{NRSSZ20}, we obtain a natural isomorphism
\[
[\aug(\beta^{>},*_1,\text{\tiny$\cdots$},*_n)/T] \isomorphic \modulistack_1(\beta^{>}).
\]
Altogether, we have natural identifications
\begin{equation}
[\aug(\beta^>,*_1,\text{\tiny$\cdots$},*_n)/T] \isomorphic \modulistack_1(\beta^{>}) \isomorphic \modulistack_1(\beta_1^{\circ}) \isomorphic [X(\Delta\beta;w_0)/T].
\end{equation}

\begin{figure}[!htbp]
\begin{center}
\begin{tikzcd}[row sep=1pc,column sep=2pc,ampersand replacement=\&]
\begin{tikzpicture}[baseline=-.5ex,scale=0.5]
\begin{scope}
\draw[thick] (2,-1) to[out=0,in=180] (4,0);
\draw[white, fill=white] (3,-0.5) circle (0.2);
\draw[thick] (1,0.5) to[out=0,in=180] (4,-1);
\draw[thick,->] (4,-1) to[out=0,in=180] (6,0);

\draw[white, fill=white] (5,-0.5) circle (0.2);

\draw[thick,->] (4,0) to[out=0,in=180] (6,-1);
\draw[thick] (6,-1) to[out=0,in=180] (9,0.5);
\draw[white, fill=white] (7,-0.5) circle (0.2);
\draw[thick] (6,0) to[out=0,in=180] (8,-1);
\draw[thick] (-1,0.5) to[out=0,in=180] (2,-1);
\draw[thick] (8,-1) to[out=0,in=180] (11,0.5);

\draw[thick] (1,0.5) to[out=0,in=180] (5,2);
\draw[thick,<-] (5,2) to[out=0,in=180] (9,0.5);

\draw[thick] (-1,0.5) to[out=0,in=180] (5,3);
\draw[thick,<-] (5,3) to[out=0,in=180] (11,0.5);

\draw (3,-0.6) node[below] {$a_1$};
\draw (5,-0.6) node[below] {$a_2$};
\draw (7.1,-0.6) node[below] {$a_3$};
\draw (8.8,0.4) node[below] {$c_1$};
\draw (10.8,0.4) node[below] {$c_2$};

\draw (8.5,0.5) node[right] {$*_1$};
\draw (10.5,0.5) node[right] {$*_2$};

\draw (5,-1.5) node[below] {$\beta^>\hookrightarrow\mathbb{R}_{x,z}^2$};

\end{scope}
\end{tikzpicture}
\arrow[r,"\mathrm{Res}"]\&
\begin{tikzpicture}[baseline=-.5ex,scale=0.5]
\begin{scope}
\draw[thick] (2,-1) to[out=0,in=180] (4,0);
\draw[white, fill=white] (3,-0.5) circle (0.2);
\draw[thick] (1,0.5) to[out=-90,in=180] (2,0) to[out=0,in=180] (4,-1);
\draw[thick,->] (4,-1) to[out=0,in=180] (6,0);

\draw[white, fill=white] (5,-0.5) circle (0.2);

\draw[thick,->] (4,0) to[out=0,in=180] (6,-1);
\draw[thick] (6,-1) to[out=0,in=180] (8,0) to[out=0,in=-135] (8.5,0.5) to[out=45,in=90] (9.5,0.5);

\draw[white, fill=white] (7,-0.5) circle (0.2);
\draw[white,fill=white] (8.5,0.5) circle(0.1);

\draw (8.6,0.5) node[below] {$c_1$};

\draw[thick] (6,0) to[out=0,in=180] (8,-1);
\draw[thick] (-1,0.5) to[out=-90,in=180] (2,-1);
\draw[thick] (8,-1) to[out=0,in=-135] (10.5,0.5) to[out=45,in=90] (11.5,0.5);

\draw[white,fill=white] (10.5,0.5) circle(0.1);

\draw (10.6,0.5) node[below] {$c_2$};

\draw[thick] (1,0.5) to[out=90,in=180] (5,2);
\draw[thick,<-] (5,2) to[out=0,in=135] (8.5,0.5) to[out=-45,in=-90] (9.5,0.5);

\draw[thick] (-1,0.5) to[out=90,in=180] (5,3);
\draw[thick,<-] (5,3) to[out=0,in=135] (10.5,0.5) to[out=-45,in=-90] (11.5,0.5);

\draw (3,-0.6) node[below] {$a_1$};
\draw (5,-0.6) node[below] {$a_2$};
\draw (7.1,-0.6) node[below] {$a_3$};

\draw (9,0.5) node[right] {$*_1$};
\draw (11,0.5) node[right] {$*_2$};

\draw (5,-1.5) node[below] {$\mathrm{Res}(\beta^>)\hookrightarrow \mathbb{R}_{x,y}^2$};
\end{scope}
\end{tikzpicture}
\end{tikzcd}
\end{center}
\caption{The right-handed Legendrian trefoil knot. Left: the front projection $\pi_{xz}(\Lambda)$ defined by $\beta^>$. Right: Ng's resolution construction \cite{Ng03} of $\Lambda$.}
\label{fig:Legendrian_trefoil_knot}
\end{figure}

\item
There is a natural $T$-equivariant isomorphism $\aug(\beta^{>},*_1,\text{\tiny$\cdots$},*_n)\isomorphic X(\Delta\beta;w_0)$, inducing the above identification.
Moreover, the ruling decomposition \cite{HR15} of $\aug(\beta^{>},*_1,\text{\tiny$\cdots$},*_n) $ coincides with the cell decomposition of $X(\Delta\beta;w_0)$ 
induced by Proposition \ref{prop:cell_decomposition_of_braid_varieties}.
\end{enumerate}
\end{remark}

\begin{remark}
In the first version of this paper, a weaker statement of Proposition \ref{prop:cell_decomposition} has been proved using augmentation varieties, 
under the assumption that $\beta_1^{\circ}$ is connected. In the current version, we drop the assumption ``$\beta_1^{\circ}$ is connected'' and use braid varieties instead.
The new proof here is more natural and can be generalized to the general case of arbitrary punctured Riemann surfaces. This will appear in a subsequent paper.
Nevertheless, by the above remark, the two approaches are essentially equivalent.
\end{remark}

\section{Dual boundary complexes of Betti moduli spaces over $(\mathbb{C}P^1,\infty)$}\label{sec:main_theorem}

\subsection{Background on dual boundary complexes}\label{sec:dual_boundary_complexes}

We firstly review the basic concepts, properties, and tools concerning the dual boundary complex of a complex algebraic variety.

If $X$ is a smooth quasi-projective variety over $\mathbb{C}$, by Hironaka's resolution of singularities, one can obtain a smooth compactification $\overline{X}$ with simple normal crossing divisor $D$ at infinity. We may also assume the intersections of the irreducible components of $D$ are connected. By encoding the information of the intersections of the irreducible components of $D$, we get a simplicial complex, called \emph{dual boundary complex} $\mathbb{D}\boundary X$.

More precisely, let $D=\cup_{i=1}^m D_i$ with $D_i$'s the irreducible components.
\begin{definition}
The dual boundary complex $\mathbb{D}\boundary X$ is the dual complex of $D$: The 0-simplices are $1,2,\ldots,m$, with $i$ corresponds to the irreducible component $D_i$. 
$\mathbb{D}\boundary X$ contains the simplex $[i_1,i_2,\ldots,i_k]$ if and only if $D_{i_1}\cap D_{i_2}\cap\ldots\cap D_{i_k}$ is non-empty.
\end{definition}

The definition of $\mathbb{D}\boundary X$ is justified by the following

\begin{theorem}\cite{Dan75}
The homotopy type of $\mathbb{D}\partial X$ is an invariant of $X$, and is independent of the choice of the compactification $\overline{X}$.
\end{theorem}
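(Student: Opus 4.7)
The plan is to reduce the invariance statement to analyzing a single elementary modification between two SNC compactifications of $X$, and then to verify that such a modification induces a homotopy equivalence on dual boundary complexes.

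First, given any two smooth compactifications $(\overline{X}_1, D_1)$ and $(\overline{X}_2, D_2)$ of $X$ with simple normal crossing boundary divisors, I would apply the logarithmic weak factorization theorem of Abramovich--Karu--Matsuki--W{\l}odarczyk to the birational map $\overline{X}_1 \dashrightarrow \overline{X}_2$ that is the identity on $X$. This produces a finite chain of smooth compactifications
\[
(\overline{X}_1, D_1) = (Y_0, E_0),\, (Y_1, E_1),\, \ldots,\, (Y_N, E_N) = (\overline{X}_2, D_2),
\]
each an SNC compactification of $X$, such that each consecutive pair is related either by a blowup or a blowdown along a smooth center $Z \subset Y_i \setminus X$ having simple normal crossings with the existing boundary $E_i$. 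By symmetry it then suffices to treat a single such blowup.

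Next, I would trace the effect of a blowup $\pi\colon \widetilde{Y} \to Y$ of this kind on the dual boundary complex. Suppose $Z$ is contained set-theoretically in the stratum $D_{i_1}\cap\cdots\cap D_{i_k}$ and in no further boundary component. Then the new boundary consists of the strict transforms $\widetilde{D}_i$ together with the exceptional divisor $E$; the incidence pattern among the $\widetilde{D}_i$ inherits that of the $D_i$, while $E$ meets precisely $\widetilde{D}_{i_1},\ldots,\widetilde{D}_{i_k}$ and no others. Combinatorially, this operation is a \emph{stellar subdivision} of the simplex $[i_1,\ldots,i_k]$ of $\mathbb{D}\partial Y$: one inserts a new vertex, corresponding to $E$, at the barycenter of this simplex and replaces it by the join of the new vertex with the boundary of the original simplex.

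Finally, the desired invariance follows from the classical fact that a stellar subdivision induces a PL-homeomorphism (hence a homotopy equivalence) on geometric realizations of simplicial complexes. The main obstacle is not this combinatorial step but securing the SNC-compatibility hypothesis in the factorization: one must know that the chain connecting $\overline{X}_1$ and $\overline{X}_2$ can be chosen so that every intermediate variety carries an SNC compactification of $X$ and every center is smooth and transverse to the current boundary. This is exactly the content of the logarithmic refinement of weak factorization, and is the critical technical input that bridges birational geometry and the combinatorial argument sketched above.
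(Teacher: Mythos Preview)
The paper does not prove this statement; it is quoted as a background result from \cite{Dan75} and used without argument. Your approach via logarithmic weak factorization is the standard modern proof (as in Stepanov and in Payne \cite{Pay13}), though it is not Danilov's original 1975 argument, which predates weak factorization by nearly three decades.

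Your outline is essentially correct, but one step is oversimplified. You assert that the exceptional divisor $E$ meets precisely $\widetilde{D}_{i_1},\ldots,\widetilde{D}_{i_k}$ and that the effect on the dual complex is a stellar subdivision of $[i_1,\ldots,i_k]$. This is only literally true when $Z$ coincides with the closed stratum $D_{i_1}\cap\cdots\cap D_{i_k}$, i.e.\ when $\mathrm{codim}\,Z = k$. In general the factorization theorem only guarantees that $Z$ is smooth with normal crossings with $D$, so one may have $\mathrm{codim}\,Z > k$; then the strict transforms $\widetilde{D}_{i_1},\ldots,\widetilde{D}_{i_k}$ can still meet, and $E$ can also meet $\widetilde{D}_j$ for any $j$ with $Z\cap D_j\neq\emptyset$. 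For instance, with $Y=\mathbb{A}^2$, $D_1=\{x=0\}$, $Z=\{(0,0)\}$, the old dual complex is a point and the new one is an edge $[1,e]$, which is not a stellar subdivision of anything. The correct general statement is that the new dual complex is obtained by attaching the cone on a contractible subcomplex (the closed star of $[i_1,\ldots,i_k]$ or a piece of it), and the inclusion of the old complex into the new one is a deformation retract. So the conclusion---homotopy equivalence---survives, but your claim of a PL-homeomorphism via literal stellar subdivision does not hold at every step, and this case distinction should be made explicit.
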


\begin{example}[Toy examples]
\begin{enumerate}
\item
For $X=\mathbb{A}^1$, we can take $\overline{X}=\mathbb{C}P^1$ with $D=\{\infty\}$. So, $\mathbb{D}\boundary(\mathbb{A}^1)=*$.
\item
For $X=\mathbb{G}_m$, we can take $\overline{X}=\mathbb{C}P^1$ with $D=\{0\}\sqcup\{\infty\}$. So, $\mathbb{D}\boundary(\mathbb{G}_m)=S^0$.
\item
For $X=\mathbb{A}^1\times\mathbb{G}_m$, we can take $\overline{X}=\mathbb{C}P^1\times\mathbb{C}P^1$, with 
\[D=(D_1=\{\infty\}\times\mathbb{C}P^1)\cup (D_2=\mathbb{C}P^1\times\{0\})\cup (D_3=\mathbb{C}P^1\times\{\infty\}).
\] 
So $\mathbb{D}\boundary(\mathbb{A}^1\times\mathbb{G}_m)=[1,2]\cup_{\{1\}}[1,3]$ is the union of two 1-simplices along a common 0-simplex, in particular contractible. Also, Observe that $\mathbb{D}\boundary(\mathbb{A}^1\times\mathbb{G}_m)=\{1\}\join\{2,3\}\isomorphic (\mathbb{D}\boundary\mathbb{A}^1)\join(\mathbb{D}\boundary\mathbb{G}_m)$ is the \emph{join} of the dual boundary complexes of the factors of $X$.
\item
For $X=\mathbb{G}_m\times\mathbb{G}_m$, we can take $\overline{X}=\mathbb{C}P^1\times\mathbb{C}P^1$, with 
\[
D=(D_1=\{0\}\times\mathbb{C}P^1)\cup(D_2=\{\infty\}\times \mathbb{C}P^1)\cup (D_3=\mathbb{C}P^1\times\{0\})\cup (D_4=\mathbb{C}P^1\times\{\infty\}).
\]
So $\mathbb{D}\boundary(\mathbb{G}_m\times\mathbb{G}_m)=[1,3]\cup[1,4]\cup[2,3]\cup[2,4]=\{1,2\}\join\{3,4\}\isomorphic S^1$, which is also the join $(\mathbb{D}\boundary\mathbb{G}_m)\join(\mathbb{D}\boundary\mathbb{G}_m)$.\\
It's then easy to see that $\mathbb{D}\boundary \mathbb{G}_m^N\isomorphic S^{N-1}$, which also follows immediately from Lemma \ref{lem:join} below.

\end{enumerate}
\end{example}

The previous example has already illustrated the compatible property of dual boundary complexes with products:

\begin{lemma}\cite[Lem.6.2]{Pay13}\label{lem:join}
Let $X,Y$ be smooth quasi-projective varieties over $\mathbb{C}$, then there's a natural homeomorphism
\[
\mathbb{D}\boundary(X\times Y)\isomorphic (\mathbb{D}\boundary X) \join (\mathbb{D}\boundary Y)
\]
where `$\join$' stands for `\emph{join}'. In particular, if $X=\mathbb{A}^1$, then $\mathbb{D}\boundary(\mathbb{A}^1\times Y)\homotopic *$ is contractible.  
\end{lemma}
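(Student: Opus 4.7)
The plan is as follows. Choose smooth projective compactifications $\overline{X} \supset X$ and $\overline{Y} \supset Y$ with simple normal crossing boundary divisors $D_X = \bigcup_{i=1}^p D_i$ and $D_Y = \bigcup_{j=1}^q E_j$, normalized so that every non-empty multi-intersection of irreducible components is connected (possible after further blow-ups, by Hironaka). Form the product $\overline{Z} := \overline{X} \times \overline{Y}$, a smooth compactification of $X \times Y$ whose boundary divisor $D_Z := \overline{Z} \setminus (X\times Y) = (D_X \times \overline{Y}) \cup (\overline{X} \times D_Y)$ has irreducible components
\[
\{D_i \times \overline{Y}\}_{1\le i\le p} \;\cup\; \{\overline{X} \times E_j\}_{1\le j\le q}.
\]
First I would verify that $D_Z$ is simple normal crossing with connected multi-intersections: in local product coordinates, $D_X$ and $D_Y$ are each defined by products of coordinate functions and meet transversely, so $D_Z$ is SNC, and any multi-intersection factors as $(D_{i_1} \cap \cdots \cap D_{i_a}) \times (E_{j_1} \cap \cdots \cap E_{j_b})$, a product of connected sets (or empty), hence connected.

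Next I would identify the dual complex with the join combinatorially. A simplex of $\mathbb{D}\boundary(X \times Y)$ is a non-empty subset of the components of $D_Z$ whose intersection in $\overline{Z}$ is non-empty, i.e.\ a pair $(\sigma,\tau)$ with $\sigma \subseteq \{D_i\}$ and $\tau \subseteq \{E_j\}$ (not both empty) such that $\bigcap_{i \in \sigma} D_i \neq \emptyset$ and $\bigcap_{j\in\tau} E_j \neq \emptyset$ (with the convention that an empty intersection equals the ambient space). These are precisely the simplices of the combinatorial join $(\mathbb{D}\boundary X) \join (\mathbb{D}\boundary Y)$, and the bijection is manifestly order-preserving on faces. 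Promoting to geometric realizations then yields the natural simplicial homeomorphism.

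Finally, the ``in particular'' assertion follows because $\mathbb{D}\boundary \mathbb{A}^1 = *$ (take $\mathbb{C}P^1 \supset \mathbb{A}^1$ with $D = \{\infty\}$), and the join $\{*\} \join L$ with any simplicial complex $L$ is the cone $CL$, hence contractible. The main subtlety is enforcing and preserving the connected-intersection convention under products: if one instead allows disconnected intersections to contribute multiple simplices, the same argument goes through componentwise, and Danilov's invariance theorem ensures that the resulting homotopy type is independent of the chosen compactification. This last point is the only place where one should pause; everything else is a direct unwinding of definitions.
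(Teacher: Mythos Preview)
Your argument is correct and is essentially the standard proof of this fact. Note, however, that the paper does not give its own proof of this lemma: it is simply cited from \cite[Lem.~6.2]{Pay13} and used as a black box. So there is no ``paper's proof'' to compare against; your write-up supplies exactly the kind of direct combinatorial argument one would expect, matching the approach in Payne's paper.
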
 

The following lemma then implies that, by removing a smaller piece of the form $\mathbb{A}^1\times Y$, the homotopy type of $\mathbb{D}\boundary X$ is unchanged:

\begin{lemma}\cite[Lem.2.3]{Sim16}\label{lem:dual_boundary_complex_remove_lemma}
Let $X$ be a smooth irreducible quasi-projective variety over $\mathbb{C}$. If $Z\subset X$ is a smooth irreducible closed sub-variety of smaller dimension with complement $U$, and $\mathbb{D}\boundary Z\homotopic *$ is contractible. Then the natural map $\mathbb{D}\boundary X\rightarrow\mathbb{D}\boundary U$ is a homotopy equivalence.
\end{lemma}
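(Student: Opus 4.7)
The plan is to exhibit a single smooth compactification $\overline{X}$ in which both $D:=\overline{X}\setminus X$ and $D\cup\overline{Z}$ are simple normal crossing divisors, where $\overline{Z}$ denotes the closure of $Z$ in $\overline{X}$. First I would appeal to Hironaka's resolution theorem: begin with any smooth compactification of $X$ with $D$ SNC, then apply embedded resolution to the pair $(\overline{X},\overline{Z}\cup D)$, blowing up along strict transforms so that $\overline{Z}$ becomes smooth and meets $D$ transversally. The outcome is a smooth compactification $\overline{X}$ of both $X$ and $U=X\setminus Z$, with SNC boundary divisor $D$ for $X$ and SNC boundary divisor $D':=D\cup\overline{Z}$ for $U$. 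By Danilov's theorem one may then compute $\mathbb{D}\boundary X$, $\mathbb{D}\boundary U$, and also $\mathbb{D}\boundary Z$ (the latter using the induced SNC compactification $\overline{Z}\supset \overline{Z}\cap D$) from these choices.

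Next I would compare the combinatorics of $\mathbb{D}\boundary U$ and $\mathbb{D}\boundary X$ directly. Writing $D=\cup_i D_i$, the complex $\mathbb{D}\boundary X$ is the nerve of $\{D_i\}$, while $\mathbb{D}\boundary U$ is the nerve of $\{D_i\}\cup\{\overline{Z}\}$. Hence $\mathbb{D}\boundary U$ decomposes as the union of two subcomplexes: the subcomplex $\mathbb{D}\boundary X$ of simplices not involving the vertex $\overline{Z}$, and the closed star $\Star(\overline{Z})$ of that vertex. Their intersection is the link $\mathrm{Lk}(\overline{Z})$, whose $(k-1)$-simplices $[i_1,\ldots,i_k]$ index non-empty intersections $\overline{Z}\cap D_{i_1}\cap\cdots\cap D_{i_k}$. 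By the transversality arranged in the first step, these are precisely the strata of the SNC divisor $\overline{Z}\cap D$ inside the smooth compactification $\overline{Z}$ of $Z$, giving a natural identification $\mathrm{Lk}(\overline{Z})\isomorphic \mathbb{D}\boundary Z$.

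Finally, the closed star $\Star(\overline{Z})$ is the simplicial cone on its link $\mathrm{Lk}(\overline{Z})\isomorphic\mathbb{D}\boundary Z$, so it is contractible. Thus $\mathbb{D}\boundary U=\mathbb{D}\boundary X \cup_{\mathbb{D}\boundary Z}\Star(\overline{Z})$ is obtained from $\mathbb{D}\boundary X$ by attaching a contractible cone along the subcomplex $\mathbb{D}\boundary Z$. Since by hypothesis $\mathbb{D}\boundary Z$ is itself contractible, the inclusion $\mathbb{D}\boundary Z\hookrightarrow\Star(\overline{Z})$ is a cofibration of contractible spaces; in the homotopy pushout $\Star(\overline{Z})\leftarrow\mathbb{D}\boundary Z\rightarrow\mathbb{D}\boundary X$ one may therefore collapse $\Star(\overline{Z})$ to a point, obtaining the quotient $\mathbb{D}\boundary X/\mathbb{D}\boundary Z$, and then collapse the image of $\mathbb{D}\boundary Z$ inside $\mathbb{D}\boundary X$ as well (another homotopy equivalence since $\mathbb{D}\boundary Z$ is contractible). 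Both steps are homotopy equivalences, so the natural inclusion $\mathbb{D}\boundary X\hookrightarrow\mathbb{D}\boundary U$ is a homotopy equivalence, as required.

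The main technical obstacle is the first step, namely arranging simultaneously SNC conditions on $D$ and on $D\cup \overline{Z}$: a naive smooth compactification of $X$ will typically have neither $\overline{Z}$ smooth nor $\overline{Z}$ transverse to the boundary. The existence of a good compactification satisfying both conditions requires iterated Hironaka-type embedded resolution, and one must then carefully verify that the resulting $\overline{Z}$ really is a valid SNC compactification of $Z$ in the sense that $\mathrm{Lk}(\overline{Z})$ computes the correct homotopy type $\mathbb{D}\boundary Z$, without spurious components introduced by exceptional divisors of the successive blowups.
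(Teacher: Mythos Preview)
The paper does not give its own proof of this lemma; it is quoted verbatim from \cite[Lem.~2.3]{Sim16}. Your overall strategy---build a common compactification, identify the link of the new vertex with $\mathbb{D}\partial Z$, and collapse a cone over a contractible space---is exactly the standard one and matches Simpson's argument.

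There is, however, a genuine gap. You assert that one can arrange ``$D\cup\overline{Z}$ a simple normal crossing divisor'', but this only makes sense when $Z$ has codimension~$1$ in $X$. The lemma allows $Z$ of arbitrary smaller dimension, and in the application (Proposition~\ref{thm:cell_decomposition} and Theorem~\ref{thm:main}) the pieces $Z_j$ typically have codimension $\geq 2$. When $\operatorname{codim}_X Z\geq 2$, the closure $\overline{Z}$ is not a divisor, so $\overline{X}$ is \emph{not} an SNC compactification of $U$ and your nerve description of $\mathbb{D}\partial U$ is not valid as stated.

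The fix is standard: after arranging $\overline{Z}$ smooth and transverse to every stratum of $D$, blow up $\overline{X}$ along $\overline{Z}$. The result $\overline{X}'$ is an SNC compactification of $U$ with boundary $E\cup\bigcup_i D_i'$, where $E$ is the exceptional divisor and $D_i'$ the proper transforms. Because $\overline{Z}$ is not contained in any $D_I$, the nerve of $\{D_i'\}$ agrees with the nerve of $\{D_i\}$, i.e.\ with $\mathbb{D}\partial X$. And since $E\cap D_{i_1}'\cap\cdots\cap D_{i_k}'$ is a $\mathbb{P}^{c-1}$-bundle over $\overline{Z}\cap D_{i_1}\cap\cdots\cap D_{i_k}$ (hence nonempty iff the base is), the link of the vertex $E$ is again $\mathbb{D}\partial Z$. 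Your final homotopy-pushout argument then applies verbatim with $E$ in place of $\overline{Z}$.
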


By an inductive procedure of removing smaller pieces of the form $\mathbb{A}^1\times Y$, one can further simplify the problem of determining the homotopy type of the dual boundary complex $\mathbb{D}\boundary X$:

\begin{lemma}\cite[Prop.2.6]{Sim16}\label{lem:dual_boundary_complex_inductive_remove}
Let $X$ be a smooth irreducible quasi-projective variety over $\mathbb{C}$. Let $U$ be a non-empty open subset, with the complement $Z=X\setminus U$ admitting a decomposition into finitely many locally closed sub-varieties of the form $Z_j\isomorphic \mathbb{A}^1\times Y$. If there is a total order on the indices such that $\cup_{j\leq a}Z_j$ is closed for all $a$. Then we obtain a homotopy equivalence $\mathbb{D}\boundary X\homotopic\mathbb{D}\boundary U$.
\end{lemma}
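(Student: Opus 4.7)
The plan is to prove the lemma by induction on $k$, the number of locally closed strata $Z_j$ in the decomposition of $X \setminus U$, by peeling off one stratum at a time. At each step we will invoke Lemma \ref{lem:dual_boundary_complex_remove_lemma} with a stratum of the form $\mathbb{A}^1 \times Y$, whose dual boundary complex is contractible by Lemma \ref{lem:join}, so removing it does not change the homotopy type of $\mathbb{D}\boundary(-)$.

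First I would convert the stratification into a decreasing filtration of $X$ by open subvarieties. Set $C_a := \bigcup_{j \leq a} Z_j$ for $0 \leq a \leq k$; by hypothesis each $C_a$ is closed in $X$, with $C_0 = \emptyset$ and $C_k = X \setminus U$. Let $V_a := X \setminus C_a$, so that $V_a$ is open in $X$, $V_0 = X$, $V_k = U$, and $V_{a-1} \setminus V_a = C_a \setminus C_{a-1} = Z_a$. Since $V_a = V_{a-1} \cap (X \setminus C_a)$ is open in $V_{a-1}$, the stratum $Z_a$ is closed in $V_{a-1}$. This is the step where the ordering hypothesis is essential: without the closedness of the $C_a$, the $Z_a$ would only be locally closed in $V_{a-1}$ and Lemma \ref{lem:dual_boundary_complex_remove_lemma} could not be applied directly.

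Next I would verify at each $a \in \{1, \ldots, k\}$ the hypotheses of Lemma \ref{lem:dual_boundary_complex_remove_lemma} with $(X, Z, U) = (V_{a-1}, Z_a, V_a)$. The ambient variety $V_{a-1}$ is smooth irreducible quasi-projective, being a non-empty open subvariety of the smooth irreducible quasi-projective $X$. The subvariety $Z_a \isomorphic \mathbb{A}^1 \times Y_a$ is smooth irreducible (since $Y_a$ is, as is implicit in writing the product $\mathbb{A}^1\times Y_a$), closed in $V_{a-1}$ as just noted, and of strictly smaller dimension than $V_{a-1}$ because $Z_a \subset X \setminus U$ is disjoint from the non-empty open $U \subset V_{a-1}$. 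Finally, by Lemma \ref{lem:join} applied with the factor $\mathbb{A}^1$, the dual boundary complex $\mathbb{D}\boundary Z_a = \mathbb{D}\boundary(\mathbb{A}^1 \times Y_a)$ is contractible. Hence Lemma \ref{lem:dual_boundary_complex_remove_lemma} yields a homotopy equivalence $\mathbb{D}\boundary V_{a-1} \homotopic \mathbb{D}\boundary V_a$.

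Composing these $k$ homotopy equivalences gives
\[
\mathbb{D}\boundary X \;=\; \mathbb{D}\boundary V_0 \;\homotopic\; \mathbb{D}\boundary V_1 \;\homotopic\; \cdots \;\homotopic\; \mathbb{D}\boundary V_k \;=\; \mathbb{D}\boundary U,
\]
which is the desired conclusion. The only real subtlety—and the one potential obstacle—is ensuring at each stage that $Z_a$ is genuinely closed (not merely locally closed) in the ambient $V_{a-1}$ and that it is smooth irreducible; both reduce, respectively, to the filtration condition $C_a$ closed and to the implicit smoothness/irreducibility of $Y_a$, so the argument is essentially a bookkeeping induction on top of the two prior lemmas.
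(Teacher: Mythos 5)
Your filtration-and-peel-off strategy is exactly the right skeleton, and the bookkeeping (defining $C_a$, $V_a$, checking $Z_a$ is closed in $V_{a-1}$, checking dimension drop, and noting $V_{a-1}$ stays smooth irreducible quasi-projective) is sound. However, the one place you wave your hands is precisely the place the paper takes care to flag, and there the argument as written does have a genuine gap.

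You claim that $Z_a \isomorphic \mathbb{A}^1 \times Y_a$ is smooth irreducible ``since $Y_a$ is, as is implicit in writing the product.'' That premise is false: the statement of the lemma imposes no smoothness or irreducibility on $Y$, and the paper's remark immediately after the lemma makes this explicit: ``Notice that in the above statement, we no longer require that $Z_j$ is smooth. The reason is that, if some $Z_j$ is singular, we can further decompose $Z_j$ into finitely many smooth pieces.'' Moreover, the very application in Theorem \ref{thm:main} takes $Z_i$ to be a \emph{disjoint union} of strata $\modulispace_B^{\ruling}$, so $Y$ is typically disconnected, hence not irreducible, and Lemma \ref{lem:dual_boundary_complex_remove_lemma} as stated does not apply to $Z_a$ directly. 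The missing step is: for each $a$, stratify $Y_a$ into finitely many smooth irreducible locally closed pieces $Y_{a,\ell}$ ordered so that initial unions are closed; then $Z_a = \sqcup_\ell (\mathbb{A}^1 \times Y_{a,\ell})$ is a refinement which still consists of pieces of the form $\mathbb{A}^1\times (\text{smooth irreducible})$ satisfying the same closedness-of-initial-segments condition, and \emph{now} Lemma \ref{lem:dual_boundary_complex_remove_lemma} applies to each refined stratum in turn (each still has contractible dual boundary complex by Lemma \ref{lem:join}). Replacing your one appeal to ``implicit smoothness/irreducibility'' with this refinement step completes the proof and matches the paper's intended argument.
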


Notice that in the above statement, we no longer require that $Z_j$ is smooth. The reason is that, if some $Z_j$ is singular, we can further decompose $Z_j$ in to finitely many smooth pieces. Then an inductive procedure reduces the proof essentially to Lemma \ref{lem:dual_boundary_complex_remove_lemma}.
As in \cite{Sim16}, Lemma \ref{lem:dual_boundary_complex_inductive_remove} will be the key tool in the proof of our main Theorem \ref{thm:main}.

\subsection{The dual boundary complex of the Betti moduli space}

Finally, we come to our main theorem.
 i.e. the weak geometric P=W conjecture \ref{conj:homotopy type conj} holds for the Betti moduli space $\modulispace_B(\mathbb{C})$ considered in this article:
 
Recall that $\beta\in\Br_n^+$ is a $n$-strand positive braid and $\beta_1=\Delta\beta\Delta$ with $\ell=\ell(\beta_1)$. 
The orbits of $\permutation(\beta_1)\in S_n$ induces a partition $[n]=\sqcup_{v=1}^c O_v$, and $o_v=\min\{j\in O_v\}$.
Besides, $G_{1,1}=\{\vec{C}_{\Right}\in T: \det \vec{C}_{\Right} = \det\permutation(\beta_1) = (-1)^{\ell} = (-1)^{n-c}\}$.
Let $G_{1,1}\ni \vec{C}_{\Right}\isomorphic (C_v=C_{\Right}^{o_v})_{v=1}^c$ be reduced and generic, and $L$ be the local system on $\beta_1^{\circ}$ determined by the $\permutation(\beta_1)$-twisted conjugacy class of $\vec{C}_{\Right}$.
The Betti moduli space of interest is
\[
\modulispace_B:=\modulispace_L(\mathbb{C}P^1,\{\infty\},\{\beta_1^{\circ}\};\mathbb{C}).
\]

Now, our main theorem establishes the weak geometric P-W conjecture \ref{conj:homotopy type conj} holds for $\modulispace_B$:
\begin{theorem}\label{thm:main}
If nonempty, then $\modulispace_B$ is a smooth affine and connected variety, 
and the dual boundary complex $\mathbb{D}\partial\modulispace_B$ of $\modulispace_B$ is homotopy equivalent to $S^{\mathrm{dim}_{\mathbb{C}}\modulispace_B -1}$.
\end{theorem}

\begin{proof}
By Proposition \ref{prop:cell_decomposition}, it suffices to show the homotopy equivalence. 
The strategy is to use the cell decomposition for $\modulispace_B$ (Proposition \ref{prop:cell_decomposition}) and apply Lemma \ref{lem:dual_boundary_complex_inductive_remove}.

Recall that $d=\dim\modulispace_B = \ell(\beta)-n-c+2$.
Let $X:=\modulispace_B$, $U:=\modulispace_B^{p_m}\isomorphic (\mathbb{C}^*)^d$, and $Z:=X\setminus U$. 
For each $1\leq i<d$, define a locally closed subvariety $Z_i$ of $X$ to be the disjoint union of $\modulispace_B^p$ of dimension $i$, then we obtain a decomposition $Z=\sqcup_{1\leq i\leq d-1} Z_i$. 
By Proposition \ref{prop:cell_decomposition}, each $Z_i$ is of the form $\mathbb{A}^1\times Y_i$, and $\cup_{j\leq a}Z_j$ is closed for all $1\leq a\leq d-1$. 
Hence, the conditions of Lemma \ref{lem:dual_boundary_complex_inductive_remove} are satisfied. It then follows that we have a homotopy equivalence
\[
\mathbb{D}\partial\modulispace_B \sim \mathbb{D}\partial (U=(\mathbb{C}^*)^d) = S^{d-1}.
\]
This finishes the proof.
\end{proof}

\begin{remark}
Observe that $\det\permutation(\beta_1)=(-1)^{\ell}=(-1)^{n-c}$, so $d= \ell(\beta)-n-c+2$ is even.
Inspired by the non-abelian Hodge theory for wild character varieties \cite{BB04}, it's natural to expect that 
the Betti moduli space $\modulispace_B$ is hyperk\"{a}hler and log Calabi-Yau. 
By \cite{Boa07,Boa14,BY15}, at least in the cases of (wild) character varieties, we know that $\modulispace_B$ is 
holomorphic symplectic. See also \cite{Boa01,Boa09,Boa21} for related results.
It's also expected that $\modulispace_B$ is a cluster variety. This makes it very interesting to study the Hodge theory 
of and mirror symmetry for $\modulispace_B$. We hope to return to this subject in the future.
For example, in the working example with $n=2$ and $\beta=\sigma_1^3$, the Betti moduli space 
$\modulispace_B=\modulispace_L(\beta_1^{\circ}) = \modulispace_1(\beta_1^{\circ})$ is the $A_2$-cluster variety \cite{STWZ19}, 
and satisfies all the properties above. It's expected to be self-mirror. 
\end{remark}

\appendix
\addtocontents{toc}{\protect\setcounter{tocdepth}{1}}

\section{Proof of Proposition \ref{prop:local_diagram_for_Betti_moduli_stacks}}\label{sec:prove_local_diagram_for_Betti_moduli_stacks}

In this section, we prove Proposition \ref{prop:local_diagram_for_Betti_moduli_stacks}, which involves establishing the identification in the middle column.
The subsequent steps become more straightforward. 
Recall that $\beta^{\vec{r}}=\sigma_{i_{\ell}}^{\vec{r}_{\ell}}\text{\tiny$\cdots$}\sigma_{i_1}^{\vec{r}_1}\in\FBr_{n,N}^+$.

\subsection{$\modulistack_{\vec{r}}(\beta)$ via quiver representations}

Define a \emph{quiver with relations} $\hat{Q}(\beta)$ as in Figure \ref{fig:framed_quivers_with_relations_for_a_positive_braid} (left):
\begin{itemize}[wide,labelwidth=!,labelindent=0pt]
\item
the set $\hat{Q}_0(\beta)$ of vertices consists of the $q_{i,j}$'s for $0\leq i\leq \ell$ and $0\leq j\leq n$;
\item
the set $\hat{Q}_1(\beta)$ of arrows consists of two types:
\begin{itemize}
\item
the \emph{upward} arrows $a_{i,j}:q_{i,j}\rightarrow q_{i,j+1}$ for $0\leq i\leq \ell$ and $0\leq j\leq n-1$,
\item 
the \emph{rightward} arrows $b_{i,j}:q_{i,j}\rightarrow q_{i+1,j}$ for $0\leq i\leq \ell-1$, $0\leq j\leq n$, unless $(i,j)=(k-1,i_k)$.
\end{itemize}
\item
every \emph{rectangle} of arrows corresponds to a commutative diagram.
\end{itemize}

\begin{figure}[!htbp]
\vspace{-0.1in}
\begin{center}
\begin{tikzcd}[row sep=0.5pc,column sep=1pc,ampersand replacement=\&]

\begin{tikzpicture}[baseline=-.5ex,scale=0.7]
\begin{scope}

\draw[thin] (0.5,0)--(7.5,0);
\draw[thin] (0.5,2)--(1,2) to [out=0,in=180] (3,4) to[out=0,in=180] (5,6)--(7.5,6);
\draw[white,fill=white] (2,3) circle (0.2);
\draw[white,fill=white] (4,5) circle (0.2);
\draw[thin] (0.5,4)--(1,4) to[out=0,in=180] (3,2)--(5,2) to[out=0,in=180] (7,4)--(7.5,4);
\draw[white,fill=white] (6,3) circle (0.2);

\draw[thin] (0.5,6)--(3,6) to[out=0,in=180] (5,4) to[out=0,in=180] (7,2)--(7.5,2);
\draw[thin] (0.5,8)--(7.5,8);

\draw (0.7,0) node[left] {{\tiny$1$}};
\draw (0.7,2) node[left] {{\tiny$2$}};
\draw (0.7,4) node[left] {{\tiny$3$}};
\draw (0.7,6) node[left] {{\tiny$4$}};
\draw (0.7,8) node[left] {{\tiny$5$}};

\draw (7.3,0) node[right] {{\tiny$1$}};
\draw (7.3,2) node[right] {{\tiny$2$}};
\draw (7.3,4) node[right] {{\tiny$3$}};
\draw (7.3,6) node[right] {{\tiny$4$}};
\draw (7.3,8) node[right] {{\tiny$5$}};

\draw[black,fill=black] (1,-1) circle(0.1);
\draw (1,-1) node[below] {{\tiny$q_{0,0}$}};
\draw[black,fill=black] (1,1) circle(0.1);
\draw (1,1) node[below] {{\tiny$q_{0,1}$}};
\draw[black,fill=black] (1,3) circle(0.1);
\draw (1,3) node[below] {{\tiny$q_{0,2}$}};
\draw[black,fill=black] (1,5) circle(0.1);
\draw (1,5) node[below] {{\tiny$q_{0,3}$}};
\draw[black,fill=black] (1,7) circle(0.1);
\draw (1,7) node[below] {{\tiny$q_{0,4}$}};
\draw[black,fill=black] (1,9) circle(0.1);
\draw (1,9) node[below] {{\tiny$q_{0,5}$}};

\draw[red,thick,->] (1,-0.5)--(1,0.5);
\draw[red,thick,->] (1,1.5)--(1,2.5);
\draw[red,thick,->] (1,3.5)--(1,4.5);
\draw[red,thick,->] (1,5.5)--(1,6.5);
\draw[red,thick,->] (1,7.5)--(1,8.5);

\draw (0.8,-0.2) node[right] {{\tiny$a_{0,0}$}};
\draw (0.8,1.8) node[right] {{\tiny$a_{0,1}$}};
\draw (0.8,3.8) node[right] {{\tiny$a_{0,2}$}};
\draw (0.8,5.8) node[right] {{\tiny$a_{0,3}$}};
\draw (0.8,7.8) node[right] {{\tiny$a_{0,4}$}};

\draw[black,fill=black] (3,-1) circle(0.1);
\draw (3,-1) node[below] {{\tiny$q_{1,0}$}};
\draw[black,fill=black] (3,1) circle(0.1);
\draw (3,1) node[below] {{\tiny$q_{1,1}$}};
\draw[black,fill=black] (3,3) circle(0.1);
\draw (3,3) node[below] {{\tiny$q_{1,2}$}};
\draw[black,fill=black] (3,5) circle(0.1);
\draw (3,5) node[below] {{\tiny$q_{1,3}$}};
\draw[black,fill=black] (3,7) circle(0.1);
\draw (3,7) node[below] {{\tiny$q_{1,4}$}};
\draw[black,fill=black] (3,9) circle(0.1);
\draw (3,9) node[below] {{\tiny$q_{1,5}$}};

\draw[red,thick,->] (3,-0.5)--(3,0.5);
\draw[red,thick,->] (3,1.5)--(3,2.5);
\draw[red,thick,->] (3,3.5)--(3,4.5);
\draw[red,thick,->] (3,5.5)--(3,6.5);
\draw[red,thick,->] (3,7.5)--(3,8.5);

\draw (2.8,-0.2) node[right] {{\tiny$a_{1,0}$}};
\draw (2.8,1.8) node[right] {{\tiny$a_{1,1}$}};
\draw (2.8,3.8) node[right] {{\tiny$a_{1,2}$}};
\draw (2.8,5.8) node[right] {{\tiny$a_{1,3}$}};
\draw (2.8,7.8) node[right] {{\tiny$a_{1,4}$}};

\draw[black,fill=black] (5,-1) circle(0.1);
\draw (5,-1) node[below] {{\tiny$q_{2,0}$}};
\draw[black,fill=black] (5,1) circle(0.1);
\draw (5,1) node[below] {{\tiny$q_{2,1}$}};
\draw[black,fill=black] (5,3) circle(0.1);
\draw (5,3) node[below] {{\tiny$q_{2,2}$}};
\draw[black,fill=black] (5,5) circle(0.1);
\draw (5,5) node[below] {{\tiny$q_{2,3}$}};
\draw[black,fill=black] (5,7) circle(0.1);
\draw (5,7) node[below] {{\tiny$q_{2,4}$}};
\draw[black,fill=black] (5,9) circle(0.1);
\draw (5,9) node[below] {{\tiny$q_{2,5}$}};

\draw[red,thick,->] (5,-0.5)--(5,0.5);
\draw[red,thick,->] (5,1.5)--(5,2.5);
\draw[red,thick,->] (5,3.5)--(5,4.5);
\draw[red,thick,->] (5,5.5)--(5,6.5);
\draw[red,thick,->] (5,7.5)--(5,8.5);

\draw (4.8,-0.2) node[right] {{\tiny$a_{2,0}$}};
\draw (4.8,1.8) node[right] {{\tiny$a_{2,1}$}};
\draw (4.8,3.8) node[right] {{\tiny$a_{2,2}$}};
\draw (4.8,5.8) node[right] {{\tiny$a_{2,3}$}};
\draw (4.8,7.8) node[right] {{\tiny$a_{2,4}$}};

\draw[black,fill=black] (7,-1) circle(0.1);
\draw (7,-1) node[below] {{\tiny$q_{3,0}$}};
\draw[black,fill=black] (7,1) circle(0.1);
\draw (7,1) node[below] {{\tiny$q_{3,1}$}};
\draw[black,fill=black] (7,3) circle(0.1);
\draw (7,3) node[below] {{\tiny$q_{3,2}$}};
\draw[black,fill=black] (7,5) circle(0.1);
\draw (7,5) node[below] {{\tiny$q_{3,3}$}};
\draw[black,fill=black] (7,7) circle(0.1);
\draw (7,7) node[below] {{\tiny$q_{3,4}$}};
\draw[black,fill=black] (7,9) circle(0.1);
\draw (7,9) node[below] {{\tiny$q_{3,5}$}};

\draw[red,thick,->] (7,-0.5)--(7,0.5);
\draw[red,thick,->] (7,1.5)--(7,2.5);
\draw[red,thick,->] (7,3.5)--(7,4.5);
\draw[red,thick,->] (7,5.5)--(7,6.5);
\draw[red,thick,->] (7,7.5)--(7,8.5);

\draw (7.2,-0.2) node[left] {{\tiny$a_{3,0}$}};
\draw (7.2,1.8) node[left] {{\tiny$a_{3,1}$}};
\draw (7.2,3.8) node[left] {{\tiny$a_{3,2}$}};
\draw (7.2,5.8) node[left] {{\tiny$a_{3,3}$}};
\draw (7.2,7.8) node[left] {{\tiny$a_{3,4}$}};

\draw[red,thick,->] (1.5,-1)--(2.5,-1);
\draw[red,thick,->] (3.5,-1)--(4.5,-1);
\draw[red,thick,->] (5.5,-1)--(6.5,-1);

\draw (2,-0.9) node[below] {{\tiny$b_{0,0}$}};
\draw (4,-0.9) node[below] {{\tiny$b_{1,0}$}};
\draw (6,-0.9) node[below] {{\tiny$b_{2,0}$}};

\draw[red,thick,->] (1.5,1)--(2.5,1);
\draw[red,thick,->] (3.5,1)--(4.5,1);
\draw[red,thick,->] (5.5,1)--(6.5,1);

\draw (2,1.1) node[below] {{\tiny$b_{0,1}$}};
\draw (4,1.1) node[below] {{\tiny$b_{1,1}$}};
\draw (6,1.1) node[below] {{\tiny$b_{2,1}$}};

\draw[red,thick,->] (3.5,3)--(4.5,3);

\draw (4,3.1) node[below] {{\tiny$b_{1,2}$}};

\draw[red,thick,->] (1.5,5)--(2.5,5);
\draw[red,thick,->] (5.5,5)--(6.5,5);

\draw (2,5.1) node[below] {{\tiny$b_{0,3}$}};
\draw (6,5.1) node[below] {{\tiny$b_{2,3}$}};

\draw[red,thick,->] (1.5,7)--(2.5,7);
\draw[red,thick,->] (3.5,7)--(4.5,7);
\draw[red,thick,->] (5.5,7)--(6.5,7);

\draw (2,7.1) node[below] {{\tiny$b_{0,4}$}};
\draw (4,7.1) node[below] {{\tiny$b_{1,4}$}};
\draw (6,7.1) node[below] {{\tiny$b_{2,4}$}};

\draw[red,thick,->] (1.5,9)--(2.5,9);
\draw[red,thick,->] (3.5,9)--(4.5,9);
\draw[red,thick,->] (5.5,9)--(6.5,9);

\draw (2,9.1) node[below] {{\tiny$b_{0,5}$}};
\draw (4,9.1) node[below] {{\tiny$b_{1,5}$}};
\draw (6,9.1) node[below] {{\tiny$b_{2,5}$}};

\end{scope}
\end{tikzpicture}

\&

\begin{tikzpicture}[baseline=-.5ex,scale=0.8]
\begin{scope}

\draw[thin] (0.5,1)--(7.5,1);
\draw[thin] (0.5,2)--(1,2) to [out=0,in=180] (3,4) to[out=0,in=180] (5,6)--(7.5,6);
\draw[red,thick,->] (1.2,2.1) to[out=25,in=200] (2.8,4.1);
\draw[white,fill=white] (2,3.1) circle (0.2);
\draw (1.8,2.4) node[below] {{\tiny$\overline{b}_{0,2}$}};

\draw[red,thick,->] (3.2,4.1) to[out=25,in=200] (4.8,6.1);
\draw[white,fill=white] (4,5.1) circle (0.2);
\draw (3.8,4.4) node[below] {{\tiny$\overline{b}_{1,3}$}};

\draw[thin] (0.5,4)--(1,4) to[out=0,in=180] (3,2)--(5,2) to[out=0,in=180] (7,4)--(7.5,4);
\draw[red,thick,->] (1.2,4.1) to[out=-20,in=155] (2.8,2.1);
\draw (1.8,3.7) node[above] {{\tiny$\overline{b}_{0,3}$}};

\draw[red,thick,->] (5.2,2.1) to[out=25,in=200] (6.8,4.1);
\draw (5.8,2.4) node[below] {{\tiny$\overline{b}_{2,2}$}};
\draw[white,fill=white] (6,3.1) circle (0.2);

\draw[thin] (0.5,6)--(3,6) to[out=0,in=180] (5,4) to[out=0,in=180] (7,2)--(7.5,2);
\draw[red,thick,->] (3.2,6.1) to[out=-20,in=155] (4.8,4.1);
\draw (3.8,5.7) node[above] {{\tiny$\overline{b}_{1,4}$}};

\draw[red,thick,->] (5.2,4.1) to[out=-20,in=155] (6.8,2.1);
\draw (5.8,3.7) node[above] {{\tiny$\overline{b}_{2,3}$}};

\draw[thin] (0.5,7)--(7.5,7);

\draw (0.7,1) node[left] {{\tiny$1$}};
\draw (0.7,2) node[left] {{\tiny$2$}};
\draw (0.7,4) node[left] {{\tiny$3$}};
\draw (0.7,6) node[left] {{\tiny$4$}};
\draw (0.7,7) node[left] {{\tiny$5$}};

\draw (7.3,1) node[right] {{\tiny$1$}};
\draw (7.3,2) node[right] {{\tiny$2$}};
\draw (7.3,4) node[right] {{\tiny$3$}};
\draw (7.3,6) node[right] {{\tiny$4$}};
\draw (7.3,7) node[right] {{\tiny$5$}};

\draw[black,fill=black] (1,1) circle(0.1);
\draw (1,1) node[below] {{\tiny$\overline{q}_{0,1}$}};
\draw[black,fill=black] (1,2) circle(0.1);
\draw (1,2) node[below] {{\tiny$\overline{q}_{0,2}$}};
\draw[black,fill=black] (1,4) circle(0.1);
\draw (1,4) node[below] {{\tiny$\overline{q}_{0,3}$}};
\draw[black,fill=black] (1,6) circle(0.1);
\draw (1,6) node[below] {{\tiny$\overline{q}_{0,4}$}};
\draw[black,fill=black] (1,7) circle(0.1);
\draw (1,7) node[above] {{\tiny$\overline{q}_{0,5}$}};

\draw[black,fill=black] (3,1) circle(0.1);
\draw (3,1) node[below] {{\tiny$\overline{q}_{1,1}$}};
\draw[black,fill=black] (3,2) circle(0.1);
\draw (3,2) node[below] {{\tiny$\overline{q}_{1,2}$}};
\draw[black,fill=black] (3,4) circle(0.1);
\draw (3,4) node[below] {{\tiny$\overline{q}_{1,3}$}};
\draw[black,fill=black] (3,6) circle(0.1);
\draw (3,6) node[below] {{\tiny$\overline{q}_{1,4}$}};
\draw[black,fill=black] (3,7) circle(0.1);
\draw (3,7) node[above] {{\tiny$\overline{q}_{1,5}$}};

\draw[black,fill=black] (5,1) circle(0.1);
\draw (5,1) node[below] {{\tiny$\overline{q}_{2,1}$}};
\draw[black,fill=black] (5,2) circle(0.1);
\draw (5,2) node[below] {{\tiny$\overline{q}_{2,2}$}};
\draw[black,fill=black] (5,4) circle(0.1);
\draw (5,4) node[below] {{\tiny$\overline{q}_{2,3}$}};
\draw[black,fill=black] (5,6) circle(0.1);
\draw (5,6) node[below] {{\tiny$\overline{q}_{2,4}$}};
\draw[black,fill=black] (5,7) circle(0.1);
\draw (5,7) node[above] {{\tiny$\overline{q}_{2,5}$}};

\draw[black,fill=black] (7,1) circle(0.1);
\draw (7,1) node[below] {{\tiny$\overline{q}_{3,1}$}};
\draw[black,fill=black] (7,2) circle(0.1);
\draw (7,2) node[below] {{\tiny$\overline{q}_{3,2}$}};
\draw[black,fill=black] (7,4) circle(0.1);
\draw (7,4) node[below] {{\tiny$\overline{q}_{3,3}$}};
\draw[black,fill=black] (7,6) circle(0.1);
\draw (7,6) node[below] {{\tiny$\overline{q}_{3,4}$}};
\draw[black,fill=black] (7,7) circle(0.1);
\draw (7,7) node[above] {{\tiny$\overline{q}_{3,5}$}};

\draw[red,thick,->] (1.5,1.1)--(2.5,1.1);
\draw[red,thick,->] (3.5,1.1)--(4.5,1.1);
\draw[red,thick,->] (5.5,1.1)--(6.5,1.1);

\draw (2,1.1) node[below] {{\tiny$\overline{b}_{0,1}$}};
\draw (4,1.1) node[below] {{\tiny$\overline{b}_{1,1}$}};
\draw (6,1.1) node[below] {{\tiny$\overline{b}_{2,1}$}};

\draw[red,thick,->] (3.5,2.1)--(4.5,2.1);
\draw (4,2.1) node[below] {{\tiny$\overline{b}_{1,2}$}};

\draw[red,thick,->] (1.5,6.1)--(2.5,6.1);
\draw[red,thick,->] (5.5,6.1)--(6.5,6.1);

\draw (2,6.1) node[below] {{\tiny$\overline{b}_{0,4}$}};
\draw (6,6.1) node[below] {{\tiny$\overline{b}_{2,4}$}};

\draw[red,thick,->] (1.5,7.1)--(2.5,7.1);
\draw[red,thick,->] (3.5,7.1)--(4.5,7.1);
\draw[red,thick,->] (5.5,7.1)--(6.5,7.1);

\draw (2,7.1) node[above] {{\tiny$\overline{b}_{0,5}$}};
\draw (4,7.1) node[above] {{\tiny$\overline{b}_{1,5}$}};
\draw (6,7.1) node[above] {{\tiny$\overline{b}_{2,5}$}};

\end{scope}
\end{tikzpicture}
\end{tikzcd}
\end{center}
\vspace{-0.1in}
\caption{Quiver with relations associated to $\beta\in\FBr_n^+$. In the figure, $n=5$, $\ell=3$, and $i_1=2, i_2=3, i_3=2$, i.e. $\beta=[\sigma_2|\sigma_3|\sigma_2]$. 
Left: $\hat{Q}(\beta)$ is the \emph{quiver with relations}, consisting of \emph{black} vertices and \emph{red} arrows, such that all the rectangles are commutative.
Right: The quiver $\overline{Q}(\beta,\ast)$ consists of \emph{black vertices} $\overline{q}_{i,j}$ and \emph{red arrows} $\overline{b}_{i,j}$.}
\label{fig:framed_quivers_with_relations_for_a_positive_braid}
\end{figure}

Denote 
\[
\vec{r}^i:=\vec{r}_{i,\Right}=\vec{r}_{i+1,\Left}, \forall 0\leq i\leq \ell;\quad \vec{r}(q_{i,j}):=\text{\tiny$\sum_{k=1}^j$}\vec{r}^i(k).
\]
Similar to \cite[Prop.6.2, Prop.5.20]{STZ17}, a local study of micro-support conditions shows that 
\[
\modulistack_{\vec{r}}(\beta)\simeq [\hat{\cB}_{\vec{r}}(\beta)/\hat{\cG}_{\vec{r}}(\beta)], 
\]
where:
\begin{enumerate}[wide,labelwidth=!,labelindent=0pt]
\item
Denote
$\hat{\cA}_{\vec{r}}(\beta):=\prod_{a\in \hat{Q}_1(\beta)}\mathrm{Hom}_{\field}(\field^{\vec{r}(s(a))},\field^{\vec{r}(t(a))})$, where $s(a)$ (resp. $t(a)$) is the source (resp. target) of $a$.
Then, $\hat{\cB}_{\vec{r}}(\beta)$ is the locally closed subvariety of $\hat{\cA}_{\vec{r}}(\beta)$ consisting of quiver representations $\hat{F}$ (so $\hat{F}(q_{i,j})=\field^{\vec{r}(q_{i,j})}$) 
of $\hat{Q}(\beta)$ such that,
\begin{enumerate} 
\item
$\hat{F}(a_{i,j})$ are injections with free cokernels, and $\hat{F}(b_{i,j})$ are isomorphisms;
\item
Each rectangle  of $\hat{Q}(\beta)$ (see Figure \ref{fig:framed_quivers_with_relations_for_a_positive_braid}. Left) containing a crossing (say, the $k$-th crossing $q_k$) is sent to a pullback diagram under $\hat{F}$: 
\[
\begin{tikzcd}[row sep=2pc,column sep=6pc]
\field^{\vec{r}(q_{k-1,i_k-1})}\arrow[r,hookrightarrow,"{\scriptscriptstyle \hat{F}(a_{k-1,i_k-1})}"]\arrow[d,>->,"{\scriptscriptstyle\hat{F}(a_{k,i_k-1})\circ\hat{F}(b_{k-1,i_k-1})}"]\arrow[dr,phantom,"\lrcorner",very near start] & \field^{\vec{r}(q_{k-1,i_k})}\arrow[d,>->,"{\scriptscriptstyle \hat{F}(b_{k-1,i_k+1})\circ \hat{F}(a_{k-1,i_k})}"]\\
\field^{\vec{r}(q_{k,i_k})}\arrow[r,hookrightarrow,swap,"{\scriptscriptstyle \hat{F}(a_{k,i_k})}"] & \field^{\vec{r}(q_{k,i_k+1})}
\end{tikzcd}
\]
\end{enumerate}

\item
The isomorphisms of such data are encoded into a linear reductive algebraic group 
\[
\hat{\cG}_{\vec{r}}(\beta):=\text{\tiny$\prod_{v\in \hat{Q}_0(\beta)}$} GL(\vec{r}(v))
\] 
with the obvious action on $\hat{\cB}_{\vec{r}}(\beta)$. 
\end{enumerate}

\subsection{A parabolic reduction}
For simplicity, denote 
\[
P^i:=P_{\vec{r}^i}=P_{\vec{r}^i(1),\cdots,\vec{r}^i(n)}\subset G, \forall 0\leq i\leq \ell; ~~\rightsquigarrow~~ P_{\Left}=P^0, P_{\Right}=P^{\ell}.
\]
\emph{Define} $\hat{\cB}_{\vec{r},P}(\beta)$ to be the closed subvariety of $\hat{\cB}_{\vec{r}}(\beta)$ consisting of $\hat{F}^{\std}$ such that 
$\hat{F}^{\std}(a_{i,j})=(I_{\vec{r}(q_{i,j})},0)^T:\field^{\vec{r}(q_{i,j})}\hookrightarrow\field^{\vec{r}(q_{i,j+1})}$ is the \emph{standard inclusion}. Such an $\hat{F}^{\std}$ will be called a \emph{parabolic quiver representation}.

Observe that the isomorphisms in $\hat{\cG}_{\vec{r}}(\beta)$ preserving $\hat{\cB}_{\vec{r},P}(\beta)$ form a closed subgroup
\begin{equation}
\hat{\cP}_{\vec{r}}(\beta):=\text{\tiny$\prod_{i=\ell}^0$} P^i\hookrightarrow \hat{\cG}_{\vec{r}}(\beta)=\text{\tiny$\prod_{q_{i,j}\in\hat{Q}_0(\beta)}$} GL(\vec{r}(q_{i,j})).
\end{equation}
The inclusion is induced by the obvious compositions
$
\text{\tiny$\prod_{i=\ell}^0$} P^i\twoheadrightarrow P^i\twoheadrightarrow P_{\vec{r}^i(1),\cdots,\vec{r}^i(j)}\hookrightarrow GL(\vec{r}(q_{i,j})).
$
Moreover, 
$\hat{\cG}_{\vec{r}}(\beta)\times^{\hat{\cP}_{\vec{r}}(\beta)} \hat{\cB}_{\vec{r},P}(\beta):=(\hat{\cG}_{\vec{r}}(\beta)\times \hat{\cB}_{\vec{r},P}(\beta))/{\hat{\cP}_{\vec{r}}(\beta)} \isomorphic \hat{\cB}_{\vec{r}}(\beta)$.
Thus,
\[
[\hat{\cB}_{\vec{r}}(\beta)/\hat{\cG}_{\vec{r}}(\beta)]\simeq [\hat{\cB}_{\vec{r},P}(\beta)/\hat{\cP}_{\vec{r}}(\beta)].
\]

\subsection{Monodromy and microlocal monodromy}

By definition, the diagram (\ref{eqn:local_diagram_for_Betti_moduli_stacks}) is now equivalent to the following diagram of quotient stacks
\begin{equation}\label{eqn:local_diagram_for_Betti_moduli_stacks_1}
\begin{tikzcd}
{[\hat{\cB}_{\vec{r}}^{\infty}(\beta)/\hat{\cG}_{\vec{r}}^{\infty}(\beta)]} & {[\hat{\cB}_{\vec{r},P}(\beta)/\hat{\cP}_{\vec{r}}(\beta)]}\arrow[l,swap,"{\mon}"]\arrow[r,"{\mumon}"] & {[\overline{\cB}_{\vec{r}}(\beta)/\overline{\cG}_{\vec{r}}(\beta)]},
\end{tikzcd}
\end{equation}
where:
\begin{enumerate}[wide,labelwidth=!,labelindent=0pt]
\item
Denote
\begin{equation}
\hat{Q}^{\infty}(\beta):=(q_{0,n}\xrightarrow[]{b_{0,n}} q_{1,n}\xrightarrow[]{b_{1,n}}\cdots\xrightarrow[]{b_{\ell-1,n}} q_{\ell,n}).
\end{equation}
Then, the monodromy map $\mon$ is induced by restricting quiver representations of $\hat{Q}(\beta)$ to $\hat{Q}^{\infty}(\beta)$. In particular, 
\[
\hat{\cB}_{\vec{r}}^{\infty}(\beta)\isomorphic G^{\ell}.
\] 
The isomorphisms of such data are encoded by the linear reductive algebraic group 
\[
\hat{\cG}_{\vec{r}}^{\infty}(\beta):=\text{\tiny$\prod_{i=\ell}^0$} GL(\vec{r}(q_{i,n}))=G^{\ell+1},
\]
with the obvious action on $\hat{\cB}_{\vec{r}}^{\infty}(\beta)$. 

\item
\emph{Define} $\overline{Q}(\beta)$ to be the quiver associated to $\beta$ as in Figure \ref{fig:framed_quivers_with_relations_for_a_positive_braid} (right): 
\begin{itemize}[wide,labelwidth=!,labelindent=0pt]
\item
the set $\overline{Q}_0(\beta)$ of vertices consists of the $\overline{q}_{i,j}$'s for $0\leq i\leq \ell$ and $1\leq j\leq n$;

\item
the set $\overline{Q}_1(\beta)$ of arrows consists of $\overline{b}_{k-1,j}:\overline{q}_{k-1,j}\rightarrow \overline{q}_{k,s_{i_k}(j)}$, $1\leq k\leq \ell, 1\leq j\leq n$.
\end{itemize}
Then, the microlocal monodromy map $\mumon$ is induced by taking cokernels of upward maps of quiver representations in $\hat{\cB}_{\vec{r}}(\beta)$.
In particular, $\overline{\cB}_{\vec{r}}(\beta)$ is the algebraic variety parametrizing the quiver representations $\overline{F}$ of $\overline{Q}(\beta)$ 
such that 
$\overline{F}(\overline{q}_{i,j})=\field^{\vec{r}(\overline{q}_{i,j})}$ and $\overline{F}(\overline{b}_{i,j})\in GL(\vec{r}(\overline{q}_{i,j}))$. That is,
\[
\overline{\cB}_{\vec{r}}(\beta)=\text{\tiny$\prod_{i=\ell-1}^0\prod_{j=1}^n$}GL(\vec{r}(\overline{q}_{i,j}))=\text{\tiny$\prod_{i=\ell-1}^0$}GL(\vec{r}^i).
\]
The isomorphisms of such data are encoded into the linear reductive algebraic group
\[
\overline{\cG}_{\vec{r}}(\beta):=\text{\tiny$\prod_{0\leq i\leq \ell, 1\leq j\leq n}$} GL(\vec{r}(\overline{q}_{i,j}))=\text{\tiny$\prod_{i=\ell}^0$} GL(\vec{r}^i),
\]
with the obvious action on $\overline{\cB}_{\vec{r}}(\beta)$. 
\end{enumerate}

\subsection{A more concrete description of $[\hat{\cB}_{\vec{r},P}(\beta)/\hat{\cP}_{\vec{r}}(\beta)]$}

For any two maps $\vec{t},\vec{s}:[n]\rightarrow\mathbb{N}$ of total rank $N$, and any 
$X\in M_{N\times N}(\field)=M_{(\vec{t}(1)+\text{\tiny$\cdots$} + \vec{t}(n))\times(\vec{s}(1)+\text{\tiny$\cdots$} + \vec{s}(n))}(\field)$, 
let $X_{i,j}^{\vec{t},\vec{s}}$ denote the $(i,j)$-block, $\forall 1\leq i,j\leq n$. For any subsets $I=\{i_1<\text{\tiny$\cdots$}<i_a\}, J=\{j_1<\text{\tiny$\cdots$}<j_b\}$ of $[n]$, denote
$X_{I,J}^{\vec{t},\vec{s}}:=(X_{i_u,j_v}^{\vec{t},\vec{s}})_{1\leq u\leq a, 1\leq v\leq b}$.
Denote by $e_1,\cdots,e_N$ the \emph{standard basis} for $\field^N$. 
For any map $\vec{t}:[n]\rightarrow\mathbb{N}$ of total rank $N$, and any $1\leq i\leq n$, denote
\begin{equation*}
\bfe_i^{\vec{t}}:=(e_{\text{\tiny$\sum_{1\leq u\leq i-1}$}\vec{t}(u)+1},\cdots,e_{\text{\tiny$\sum_{1\leq u\leq i}$}\vec{t}(u)}).
\end{equation*}
Then, $(\bfe_1^{\vec{t}},\cdots,\bfe_n^{\vec{t}})$ is called the \emph{standard block basis} of $\field^N$ with respect to $\vec{t}$.

\begin{lemma}\label{lem:parameter_space_with_parabolic_reduction_for_a_positive_braid}
\begin{enumerate}[wide,labelwidth=!,labelindent=0pt]
\item
There exists a natural identification
\begin{eqnarray*}
\hat{\cB}_{\vec{r},P}(\beta)&\isomorphic&\{(\phi_{k-1,n})_{k=\ell}^1:\phi_{k-1,n}\in P_{\vec{r}^k(1),{\scriptstyle\cdots},\vec{r}^k(i_k)+\vec{r}^k(i_k+1),{\scriptstyle\cdots},\vec{r}^k(n)},
(\phi_{k-1,n})_{i_k+1,i_k}^{\vec{r}^k,\vec{r}^{k-1}}\in GL(\vec{r}^{k-1}(i_k),\field)\}\\
&\isomorphic& \text{\tiny$\prod_{k=\ell}^1$} (P^k\times M_{\vec{r}^{k-1}(i_k)\times\vec{r}^{k-1}(i_k+1)}(\field)),
\end{eqnarray*}
where the second identification is uniquely determined by the \emph{unique factorization}:
\begin{equation}\label{eqn:unique_factorization}
\phi_{k-1,n}=p_k\bfB_{i_k}^{\vec{r}_k}(\epsilon_k),\quad (p_k,\epsilon_k)\in P^k\times M_{\vec{r}^{k-1}(i_k)\times\vec{r}^{k-1}(i_k+1)}(\field).
\end{equation}

\item
Under the second identification, the action of $(h_k)_{k=\ell}^0\in\hat{\cG}_{\vec{r},P}=\prod_{k=\ell}^0P^k$ on 
$(p_k,\epsilon_k)_{k=\ell}^1\in \hat{\cB}_{\vec{r},P}(\beta)\isomorphic\prod_{k=\ell}^1 (P^k\times M_{\vec{r}^{k-1}(i_k)\times\vec{r}^{k-1}(i_k+1)}(\field))$ becomes the following: 
$(\hat{p}_k,\hat{\epsilon}_k)_{k=\ell}^1:=(h_k)_{k=\ell}^0\cdot (p_k,\epsilon_k)_{k=\ell}^1$ is uniquely determined by 
\[
[h_k]^{\vec{r}^k}\circ[p_k]^{\vec{r}^k}\circ[\bfB_{i_k}(\epsilon_k)]\circ[h_{k-1}^{-1}]^{\vec{r}^{k-1}} = [\hat{p}_k]^{\vec{r}^k}\circ[\bfB_{i_k}^{\vec{r}_k}(\hat{\epsilon}_k)]\in\underline{\FBD}_{n,N}.
\]

\item
The diagram (\ref{eqn:local_diagram_for_Betti_moduli_stacks_1}) is now induced by
\[
\begin{tikzcd}
\hat{\cB}_{\vec{r}}^{\infty}(\beta) & \hat{\cB}_{\vec{r},P}(\beta) \arrow[l,swap,"{\mon_0}"]\arrow[r,"{\mumon_0}"] 
& \overline{\cB}_{\vec{r}}(\beta),
\end{tikzcd}
\]
where $\mon_0((p_k,\epsilon_k)_{k=1}^{\ell})(b_{k-1,n}):=p_k\bfB_{i_k}^{\vec{r}_k}(\epsilon_k)$, and 
\[
\bfs_{i_k}^{\vec{r}_k}\mumon_0((p_k,\epsilon_k)_{k=1}^{\ell})\diag(\overline{b}_{k-1,1},\text{\tiny$\cdots$},\overline{b}_{k-1,n})=D(p_k)\bfs_{i_k}^{\vec{r}_k}.
\]
\end{enumerate}
\end{lemma}
From now on, we will always use the identification $\hat{\cB}_{\vec{r},P}(\beta)\isomorphic \text{\tiny$\prod_{k=\ell}^1$} (P^k\times M_{\vec{r}^{k-1}(i_k)\times\vec{r}^{k-1}(i_k+1)}(\field))$.

\begin{proof}
By Definition, any parabolic quiver representation $\hat{F}^{\std}\in \hat{\cB}_{\vec{r},P}(\beta)$ is determined by $\phi_{k,j}:=\hat{F}^{\std}(b_{k,j})\in GL(\vec{r}(q_{k,j}))$ such that:
\begin{itemize}[wide,labelwidth=!,labelindent=0pt]
\item
Each rectangle of $\hat{Q}(\beta)$ (see Figure \ref{fig:framed_quivers_with_relations_for_a_positive_braid}. Left) is sent to a commutative diagram under $\hat{F}^{\std}$.
Equivalently, $\phi_{k-1,n}\in P_{\vec{r}^{k-1}(1),\text{\tiny$\cdots$},\vec{r}^{k-1}(i_k)+\vec{r}^{k-1}(i_k+1),\text{\tiny$\cdots$},\vec{r}^{k-1}(n)}
=P_{\vec{r}^k(1),\text{\tiny$\cdots$},\vec{r}^k(i_k)+\vec{r}^k(i_k+1),\text{\tiny$\cdots$},\vec{r}^k(n)}$ for $1\leq k\leq \ell$, and 
\begin{equation}\label{eqn:parameter_space_with_parabolic_reduction_for_a_positive_braid_1}
\phi_{k-1,j}=(\phi_{k-1,n})_{\leq j,\leq j}^{\vec{r}^k,\vec{r}^{k-1}}, 1\leq k\leq \ell.
\end{equation}

\item
Each rectangle  of $\hat{Q}(\beta)$ (see Figure \ref{fig:framed_quivers_with_relations_for_a_positive_braid}. Left) containing a crossing (say, $q_k$) is sent to a pullback diagram under $\hat{F}^{\std}$: 
\[
\begin{tikzcd}[row sep=2pc,column sep=6pc]
\field^{\vec{r}^{k-1}(q_{k-1,i_k-1})}\arrow[r,hookrightarrow,"{\scriptscriptstyle \hat{F}^{\std}(a_{k-1,i_k-1})=(I,0)^T}"]\arrow[d,>->,"{\scriptscriptstyle\hat{F}^{\std}(a_{k,i_k-1})\circ\phi_{k-1,i_k-1}}"]\arrow[dr,phantom,"\lrcorner",very near start] & \field^{\vec{r}^{k-1}(q_{k-1,i_k})}\arrow[d,>->,"{\scriptscriptstyle \phi_{k-1,i_k+1}\circ \hat{F}^{\std}(a_{k-1,i_k})}"]\\
\field^{\vec{r}^k(q_{k,i_k})}\arrow[r,hookrightarrow,swap,"{\scriptscriptstyle \hat{F}^{\std}(a_{k,i_k})=(I,0)^T}"] & \field^{\vec{r}^k(q_{k,i_k+1})}
\end{tikzcd}
\]
Equivalently,  observe that
{\small\begin{eqnarray*}
\im(\phi_{k-1,i_k+1}\circ \hat{F}^{\std}(a_{k-1,i_k})\circ\hat{F}^{\std}(a_{k-1,i_k-1}))&=&\im(\hat{F}^{\std}(a_{k,i_k})\circ \hat{F}^{\std}(a_{k,i_k-1})\circ\phi_{k-1,i_k-1})\\
&=&\mathrm{Span}(\bfe_1^{\vec{r}^k},\cdots,\bfe_{i_k-1}^{\vec{r}^k})\subset \field^{\vec{r}^k(q_{k,i_k+1})},\\
\im(\hat{F}^{\std}(a_{k,i_k}))&=&\mathrm{Span}(\bfe_1^{\vec{r}^k},\cdots,\bfe_{i_k}^{\vec{r}^k})\subset \field^{\vec{r}^k(q_{k,i_k+1})},
\end{eqnarray*}}
then $(\bfe_1^{\vec{r}^k},\cdots,\bfe_{i_k}^{\vec{r}^k},\phi_{k-1,i_k+1}(\bfe_{i_k}^{\vec{r}^{k-1}}))$ is a basis for $\field^{\vec{r}^k(q_{k,i_k+1})}$. In other words, by (\ref{eqn:parameter_space_with_parabolic_reduction_for_a_positive_braid_1}), we have
\begin{equation}\label{eqn:parameter_space_with_parabolic_reduction_for_a_positive_braid_2}
(\phi_{k-1,n})_{i_k+1,i_k}^{\vec{r}^k,\vec{r}^{k-1}}\in GL(\vec{r}^{k-1}(i_k),\field),\quad 1\leq k\leq \ell.
\end{equation}
\end{itemize}
This shows the first identification. The second identification follows immediately from the unique factorization (\ref{eqn:unique_factorization}), which is just elementary linear algebra.
$(2)$ and $(3)$ follow from the definition and an easy application of the diagram calculus in $\underline{\FBD}_n$, the details are omitted.
This finishes the proof.
\end{proof}

\subsection{Finish the proof}

By the previous discussions, Proposition \ref{prop:local_diagram_for_Betti_moduli_stacks} now follows from the following

\begin{lemma}
There exists a natural equivalence of algebraic stacks
\[
\Psi_{\beta}:[\hat{\cB}_{\vec{r},P}(\beta)/\hat{\cP}_{\vec{r}}(\beta)] \xrightarrow[]{\simeq} [\hat{X}_{\vec{r}}(\beta)/(P_{\Right}\times P_{\Left})]
\]
compatible with left and right restrictions, monodromy and microlocal monodromy.
\end{lemma}

\begin{proof}
Recall that $\hat{\cB}_{\vec{r},P}(\beta)\isomorphic \text{\tiny$\prod_{k=\ell}^1$} (P^k\times M_{\vec{r}^{k-1}(i_k)\times\vec{r}^{k-1}(i_k+1)}(\field))$, 
and $\hat{\cP}_{\vec{r}}(\beta)=\text{\tiny$\prod_{k=\ell}^0$}P^k = P_{\Right}\times (\text{\tiny$\prod_{k=\ell-1}^1$}P^k)\times P_{\Left}$.
Besides, $\hat{X}_{\vec{r}}(\beta) = P_{\Right} \times \text{\tiny$\prod_{k=\ell}^1$} M_{\vec{r}^{k-1}(i_k)\times\vec{r}^{k-1}(i_k+1)}(\field)$.
Define a \emph{surjective} morphism of algebraic varieties
\[
\fc_0:\hat{\cB}_{\vec{r},P}(\beta) \rightarrow \hat{X}_{\vec{r}}(\beta): (p_k,\epsilon_k)\mapsto \epsilon'=(\epsilon_{\Right}',\vec{\epsilon}')=(\epsilon_{\Right}',\epsilon_{\ell}',\text{\tiny$\cdots$},\epsilon_1'),
\]
where $\epsilon'$ is uniquely determined by
\[
\text{\tiny$\prod_{k=\ell}^1$} ([p_k]^{\vec{r}^k}\circ[\bfB_{i_k}^{\vec{r}_k}(\epsilon_k)])
= [\epsilon_{\Right}']^{\vec{r}_{\Right}}\circ[\bfB_{\beta}^{\vec{r}}(\vec{\epsilon}')]'\in\underline{\FBD}_{n,N}.
\]
Observe that $\text{\tiny$\prod_{k=\ell-1}^1$}P^k$ acts freely on $\hat{\cB}_{\vec{r},P}(\beta)\isomorphic \text{\tiny$\prod_{k=\ell}^1$} (P^k\times M_{\vec{r}^{k-1}(i_k)\times\vec{r}^{k-1}(i_k+1)}(\field))$,
and $\fc_0$ is a geometric quotient for this action.
Thus, $[\hat{\cB}_{\vec{r},P}(\beta)/(\text{\tiny$\prod_{k=\ell-1}^1$}P^k)]\simeq \hat{X}_{\vec{r}}(\beta)$, which then induces an equivalence 
$\Psi_{\beta}:[\hat{\cB}_{\vec{r},P}(\beta)/\hat{\cP}_{\vec{r}}(\beta)] \simeq [\hat{X}_{\vec{r}}(\beta)/(P_{\Right}\times P_{\Left})]$.
The rest is straightforward. 
\end{proof}

\section{Irregular Riemann-Hilbert correspondence over curves}\label{sec:irregular_RH}

Here, we give a microlocal reformulation of the irregular Riemann-Hilbert correspondence over curves, following \cite[\S 3.3]{STWZ19}.
For related discussions in a different perspective, see also \cite{Boa21}.

Let $C$ be a compact Riemann surface with punctures (or marked points) $\{p_1,p_2,\ldots,p_k\}$. The main related case in this article is $C=\mathbb{C}P^1$ with puncture $\{p_1=\infty\}$. We're interested in integrable meromorphic connections $(H,\nabla)$ on $C$ with prescribed (possibly irregular) singularities along the punctures. 

In the regular case, by taking \emph{monodromy} of connections, the Riemann-Hilbert correspondence states that there's an equivalence of categories between the (rank $r$) integrable meromorphic connections on $C$ with regular singularities along the punctures $\{p_1,p_2,\ldots,p_k\}$, and (rank $r$) locally constant sheaves on $C\setminus\{p_1,p_2,\ldots,p_k\}$, in other words, (rank $r$) representation of the fundamental group of $C\setminus\{p_1,p_2,\ldots,p_k\}$. Passing to moduli spaces, we then obtain a \emph{complex analytic} isomorphism between a \emph{de Rham moduli space} $\modulispace_{\mathrm{DR}}$ of regular connections, and a \emph{Betti moduli space} $\modulispace_{\mathrm{B}}$ of locally constant sheaves (or fundamental group representations), i.e. character varieties.
The isomorphism is not algebraic, since taking the monodromy of a connection involves an exponential.

The regular Riemann-Hilbert correspondence admits a generalization to the irregular case, by specifying the \emph{formal types of irregular singularities} of the connections at the punctures on the one hand, and encoding the \emph{Stokes data} of the solution sheaves on the other. This is termed as \emph{irregular Riemann-Hilbert correspondence}, about which we review some details here.

Fix a puncture $p_i$, let $u$ be the local coordinate near $p_i$ with $u(p_i)=0$. Let $(H,\nabla)$ be a rank $r$ holomorphic bundle with meromorphic connection on $C$ as above, hence $\nabla$ is holomorphic on a punctured disk at $p_i$.

\begin{notation}
$\mathbb{C}\{u\}:=\text{the algebra of convergent power series in $u$}$, and $\mathbb{C}\{u\}[u^{-1}]:= \text{the fraction field}$;\\
$\mathbb{C}[[u]]:=\text{the algebra of formal power series in $u$}$, and $\mathbb{C}((u)):=\text{the fraction field}$.
\end{notation}

\subsection{Formal classification of irregular connections}

Let $(\mathscr{M},\nabla)$ be the germ of a rank $r$ \emph{meromorphic} bundle with connection, termed as \emph{meromorphic connection over $\mathbb{C}\{u\}[u^{-1}]$}. That is, $\mathscr{M}$ is a finite dimensional $\mathbb{C}\{u\}[u^{-1}]$-vector space, and
$\nabla:\mathscr{M}\rightarrow\mathbb{C}\{u\}[u^{-1}]du\otimes_{\mathbb{C}\{u\}[u^{-1}]}\mathscr{M}$ is a $\mathbb{C}$-linear map satisfying the Leibniz rule. By a \emph{holomorphic extension} of $(\mathscr{M},\nabla)$, we mean the germ of a rank $r$ holomorphic bundle with meromorphic connection $(\mathscr{H},\nabla)$ such that $(\mathscr{M},\nabla)=(\mathscr{H},\nabla)|_{\mathbb{C}\{u\}[u^{-1}]}$, i.e. $\mathscr{M}=\mathscr{H}\otimes_{\mathbb{C}\{u\}}\mathbb{C}\{u\}[u^{-1}]$.

In our case, let $(\mathscr{H},\nabla):=(H,\nabla)|_{\mathbb{C}\{u\}}$ be the germ of $(H,\nabla)$ at $p_i$. Then $(\mathscr{M},\nabla)=(\mathscr{H},\nabla)|_{\mathbb{C}\{u\}[u^{-1}]}$, and( $\mathscr{H},\nabla)$ defines a \emph{holomorphic extension} of $(\mathscr{M},\nabla)$.

Taking a trivialization $\mathscr{H}\isomorphic \mathbb{C}\{u\}^r$, the connection $\nabla$ can be written as:
\begin{eqnarray}\label{eqn:meromorphic_connection_local_model}
\nabla=d+A_n\frac{du}{u^n}+\ldots+A_2\frac{du}{u^2}+(A_1\frac{du}{u}+B(u)du).
\end{eqnarray}
where $A_i\in\mathrm{gl}_r(\mathbb{C})$, and $B(u)\in\mathrm{gl}_r(\mathbb{C}\{u\})$ is holomorphic. We say that $(\mathscr{M}, \nabla)$ has a \emph{regular singularity} at $0$ if and only if, for some holomorphic extension with a trivialization (equivalently, a trivialization $\mathscr{M}\isomorphic \mathbb{C}\{u\}[u^{-1}]^r$), we have $A_i=0$ for $i>1$.
Otherwise, we say that $(\mathscr{M},\nabla)$ has an \emph{irregular singularity} at $0$.

For any $f\in t^{-1}\mathbb{C}[t^{-1}]$, \emph{denote} $\mathscr{E}^f:=(\mathbb{C}\{t\},d-df)$, a rank $1$ holomorphic bundle with meromorphic connection.
The formal classification of $(\mathscr{M},\nabla)$ is given by the \emph{Levelt-Turrittin formal decomposition theorem}:

\begin{theorem}[Levelt-Turrittin. See e.g. {\cite[Thm.3.1]{VdPS12},\cite[Sec.4. Prop.]{Lev75}}]
There exists a finite base change $\pi_N:\mathrm{Spec}\mathbb{C}\{t\}\rightarrow\mathrm{Spec}\mathbb{C}\{u\}$ with $\pi_N(t):=t^N=u$, distinct polynomials $g_i\in t^{-1}\mathbb{C}[t^{-1}], 1\leq i\leq s$, and $\mathbb{C}\{t\}[t^{-1}]$-vector spaces $\mathscr{R}_i$ with meromorphic connections $\nabla_i$ having a regular singularity at $t=0$, such that
\begin{eqnarray}\label{eqn:Levelt-Turrittin}
\pi_N^*(\mathscr{M},\nabla)\otimes_{\mathbb{C}\{t\}[t^{-1}]}\mathbb{C}((t))\isomorphic [\oplus_{i=1}^s\mathscr{E}^{g_i}\otimes_{\mathbb{C}\{t\}[t^{-1}]} (\mathscr{R}_i,\nabla_i)]\otimes_{\mathbb{C}\{t\}[t^{-1}]}\mathbb{C}((t))
\end{eqnarray}
and 
\[\nabla_i=d-C_i\frac{dt}{t}\]
with respect to a trivialization $\mathscr{R}_i\isomorphic \mathbb{C}\{t\}[t^{-1}]^{r_i}$, such that: 
\begin{eqnarray}\label{eqn:Jordan_normal_form}
\text{$C_i$ is of Jordan normal form, and for any two eigenvalues $\lambda,\mu\in\mathbb{C}$, $\lambda-\mu\in\mathbb{Z}\Rightarrow \lambda=\mu$.}
\end{eqnarray}

\noindent{}Moreover, the formal decomposition is \emph{unique} in the following sense:\\
the positive integer $N$ and the polynomials $g_i$ with multiplicity $r_i$ for all $i$, are uniquely determined;\\
for each $i$, the Jordan normal form $C_i$ satisfying (\ref{eqn:Jordan_normal_form}) is uniquely determined, up to permuting the Jordan blocks and adding to each block an integer multiple.
\end{theorem} 

Now, we can make the following
\begin{definition}\cite[Sec.3.3]{STWZ19}
Let $(\mathscr{M},\nabla)$ be a meromorphic connection over $\mathbb{C}\{u\}[u^{-1}]$, with its Levelt-Turrittin formal decomposition as in the theorem.
The \emph{formal type of irregular singularity} of $(\mathscr{M},\nabla)$ is simply the set of polynomials $\{g_i\in u^{-\frac{1}{N}}\mathbb{C}[u^{-\frac{1}{N}}]\}$.
\end{definition}

\begin{corollary}\label{cor:action_on_formal_type}
Given $(\mathscr{M},\nabla)$ with the Levelt-Turrittin formal decomposition as in the theorem, there's a \emph{group action} of $C_N=\mathbb{Z}/N$ on the set $\{(g_i,r_i),[C_i]\}$.\\
Here, $C_N=\langle\sigma\rangle$ is the Galois group of $\pi_N:\mathrm{Spec}\mathbb{C}\{t\}\rightarrow\mathrm{Spec}\mathbb{C}\{u\}$, in which $\sigma$ acts by $\sigma\cdot t=\zeta t$ with $\zeta=e^{\frac{2\pi i}{N}}$ the $N$-th primitive root of unity. And, $[C_i]$ denotes the equivalence class of $C_i$ up to permuting the Jordan blocks and adding to each block an integer multiple.
\end{corollary}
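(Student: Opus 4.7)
The plan is to exploit the fact that $\pi_N \circ \sigma = \pi_N$ as morphisms $\mathrm{Spec}\,\mathbb{C}\{t\} \to \mathrm{Spec}\,\mathbb{C}\{u\}$, which forces $\sigma^*$ to preserve the pullback $\pi_N^*(\mathscr{M},\nabla)$. More precisely, for any $\sigma \in C_N$ there is a canonical isomorphism $\sigma^*\pi_N^*(\mathscr{M},\nabla) \isomorphic \pi_N^*(\mathscr{M},\nabla)$ of meromorphic connections over $\mathbb{C}\{t\}[t^{-1}]$. Applying $\sigma^*$ to both sides of the Turrittin--Levelt decomposition (\ref{eqn:Turrittin-Levelt}) and using this canonical isomorphism, I obtain a second decomposition of $\pi_N^*(\mathscr{M},\nabla)\otimes\mathbb{C}((t))$ of the same shape, with summands $\sigma^*\mathscr{E}^{g_i}\otimes\sigma^*(\mathscr{R}_i,\nabla_i)$.

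Next I compute these pulled-back summands explicitly. For the exponential factor, $\sigma^*\mathscr{E}^{g_i}=(\mathbb{C}\{t\},d-d(g_i\circ\sigma))=\mathscr{E}^{\sigma\cdot g_i}$ where $(\sigma\cdot g_i)(t):=g_i(\zeta t)$, which still lies in $t^{-1}\mathbb{C}[t^{-1}]$. For the regular part, in the given trivialization $\nabla_i = d - C_i\,dt/t$; since $\zeta$ is a nonzero constant, $\sigma^*(dt/t)=d(\zeta t)/(\zeta t)=dt/t$, so $\sigma^*(\mathscr{R}_i,\nabla_i)$ is represented by the same Jordan matrix $C_i$ on a module of the same rank $r_i$. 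Consequently the $\sigma$-translated decomposition reads
\[
\pi_N^*(\mathscr{M},\nabla)\otimes_{\mathbb{C}\{t\}[t^{-1}]}\mathbb{C}((t))\isomorphic \Bigl[\bigoplus_{i=1}^s \mathscr{E}^{\sigma\cdot g_i}\otimes_{\mathbb{C}\{t\}[t^{-1}]}(\mathscr{R}_i,\nabla_i)\Bigr]\otimes_{\mathbb{C}\{t\}[t^{-1}]}\mathbb{C}((t)).
\]

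By the uniqueness part of the Turrittin--Levelt theorem, the multisets of data $\{(g_i,r_i,[C_i])\}$ and $\{(\sigma\cdot g_i,r_i,[C_i])\}$ must coincide, so there is a unique permutation $\tau_\sigma$ of the index set $\{1,\dots,s\}$ with $(g_{\tau_\sigma(i)},r_{\tau_\sigma(i)},[C_{\tau_\sigma(i)}])=(\sigma\cdot g_i, r_i,[C_i])$. Setting $\sigma\cdot(g_i,r_i,[C_i]) := (g_{\tau_\sigma(i)},r_{\tau_\sigma(i)},[C_{\tau_\sigma(i)}])$ gives the desired map. To verify this is a group action, note that $\sigma\mapsto \tau_\sigma$ is a homomorphism because $(\sigma\tau)\cdot g_i = g_i(\zeta^{m+n}t)=\sigma\cdot(\tau\cdot g_i)$ for $\sigma=\zeta^m$, $\tau=\zeta^n$, and the identity of $C_N$ visibly acts trivially; uniqueness of $\tau_{\sigma\tau}$ then forces $\tau_{\sigma\tau}=\tau_\sigma\circ\tau_\tau$.

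The main obstacle is the careful bookkeeping around the regular factors $(\mathscr{R}_i,\nabla_i)$: one must confirm that the equivalence class $[C_i]$ really is invariant under $\sigma^*$ (and not shifted by an integer matrix), which is where the normalization condition (\ref{eqn:Jordan_normal_form}) is essential. Once the pullback of the connection form $d-C_i\,dt/t$ is seen to be literally the same expression (because $d\log(\zeta t)=d\log t$), the Jordan type and its eigenvalues modulo $\mathbb{Z}$ are unchanged, and the uniqueness clause pins down the permutation. Everything else is a direct book-keeping exercise using the functoriality of pullback of meromorphic connections.
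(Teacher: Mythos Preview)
Your proposal is correct and follows essentially the same route as the paper: use $\pi_N\circ\sigma=\pi_N$ to identify $\sigma^*\pi_N^*(\mathscr{M},\nabla)$ with $\pi_N^*(\mathscr{M},\nabla)$, compute that $\sigma^*\mathscr{E}^{g_i}=\mathscr{E}^{\sigma\cdot g_i}$ and $\sigma^*(d-C_i\,dt/t)=d-C_i\,dt/t$ (since $d\log(\zeta t)=d\log t$), and then invoke the uniqueness clause of Turrittin--Levelt to conclude that the multiset $\{(g_i,r_i,[C_i])\}$ is permuted by $\sigma$. Your extra care in verifying the homomorphism property $\tau_{\sigma\tau}=\tau_\sigma\circ\tau_\tau$ is a welcome addition, though note that since $\sigma^*$ leaves the matrix $C_i$ literally unchanged, condition~(\ref{eqn:Jordan_normal_form}) is not really needed to see that $[C_i]$ is preserved---it is rather what makes $[C_i]$ well-defined in the uniqueness statement to begin with.
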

\begin{proof}
$\pi_N\circ\sigma=\pi_N$ implies that 
\begin{eqnarray*}
\pi_N^*(\mathscr{M},\nabla)\otimes_{\mathbb{C}\{t\}[t^{-1}]}\mathbb{C}((t))\isomorphic [\oplus_{i=1}^s\mathscr{E}^{\sigma^*g_i}\otimes_{\mathbb{C}\{t\}[t^{-1}]} (\sigma^*\mathscr{R}_i,\sigma^*\nabla_i)]\otimes_{\mathbb{C}\{t\}[t^{-1}]}\mathbb{C}((t)),
\end{eqnarray*}
and with respect to the trivialization $\sigma^*\mathscr{R}_i\isomorphic\sigma^*(\mathbb{C}\{t\}[t^{-1}]^{r_i})\isomorphic\mathbb{C}\{t\}[t^{-1}]^{r_i}$, we have $\sigma^*\nabla_i=d-C_i\frac{d(\sigma^*t)}{\sigma^*t}=d-C_i\frac{dt}{t}$. It follows from the uniqueness of the Levelt-Turrittin formal decomposition that 
\[
\{(\sigma^*(g_i,r_i),[C_i]),1\leq i\leq s\}=\{(g_i,r_i,[C_i]),1\leq i\leq s\}.
\] 
 In other words, the group $C_N=\langle\sigma\rangle$ acts on $\{(g_i,r_i,[C_i])\}$.
\end{proof}

\subsection{Birkhoff extension}

Let $(\mathscr{M},\nabla)$ be the germ of a \emph{meromorphic} bundle with connection as above. 

\begin{lemma}\cite[II.Cor.3.7]{Sab02}
The functor $\mathscr{R}_0$ of taking germ at $0\in\mathbb{C}P^1$ is an equivalence of categories from
\[
(\text{algebraic vector bundles with connections on $\mathbb{A}^1\setminus\{0\}$ with regular singularity at $\infty$})
\]
to
\[
(\text{finite dimensional $\mathbb{C}\{u\}[u^{-1}]$-vector spaces with meromorphic connections}).
\]
\end{lemma}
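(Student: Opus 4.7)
The plan is to construct an explicit quasi-inverse $\mathscr{B}$ (the Birkhoff extension) to $\mathscr{R}_0$ and verify essential surjectivity and fully faithfulness separately, with the main analytic inputs being Deligne's theorem on regular singular connections together with Serre's GAGA on $\mathbb{P}^1$.

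For essential surjectivity, I would start from a germ $(\mathscr{M}, \nabla)$ and compute the monodromy $T_0 \in GL_r(\mathbb{C})$ of its local system of flat sections on a small punctured disk $D^* \subset \mathbb{G}_m^{an}$ around $0$. By Deligne's theorem on regular singular connections, the inverse monodromy $T_0^{-1}$ is realized by a regular singular connection near $\infty$; concretely one may take the canonical Deligne extension $d + C\,dv/v$ on $V = \{|v| < \delta\}$ with $v = 1/u$ and $C \in \mathrm{gl}_r(\mathbb{C})$ chosen so that $\exp(-2\pi i C) = T_0^{-1}$ with $\mathrm{Spec}(C) \subset [0,1)$. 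After shrinking $D$ and $V$ so their intersection $A$ is an annulus in $\mathbb{G}_m^{an}$, the fundamental solutions $\Phi_0, \Phi_\infty$ of the two connections on the universal cover of $A$ pick up the same monodromy $T_0$ under the generator of $\pi_1(A)$ (the opposite orientation of a loop around $0$ in $u$ versus around $\infty$ in $v$ converts $T_0^{-1}$ back to $T_0$), so $P := \Phi_0 \Phi_\infty^{-1}$ lies in $GL_r(\mathcal{O}(A))$ and is single-valued. This $P$ glues the two local analytic models into an analytic bundle with connection on $\mathbb{G}_m^{an}$, regular at $\infty$ by construction. Serre's GAGA applied to the meromorphic extension across $\infty$ on $\mathbb{P}^1$ (Deligne's algebraizability theorem for connections regular at infinity) then upgrades this analytic object to an algebraic bundle with connection $(E, \nabla)$ on $\mathbb{G}_m$ whose germ at $0$ recovers $(\mathscr{M}, \nabla)$.

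For fully faithfulness, morphisms $(E_1, \nabla_1) \to (E_2, \nabla_2)$ correspond to horizontal sections of $F := E_1^\vee \otimes E_2$, itself algebraic and regular at $\infty$. A horizontal germ of $F$ at $0$ is a flat section of $F^{an}$ on a punctured disk, i.e.\ a monodromy-invariant element of the flat local system. Since $\pi_1(D^*)$ and $\pi_1(\mathbb{G}_m^{an})$ are both $\mathbb{Z}$, generated by the very same loop around $0$, the corresponding spaces of monodromy-invariants coincide, so the horizontal germ extends uniquely to a globally defined analytic flat section of $F$ on $\mathbb{G}_m^{an}$. Regular singularity at $\infty$ forces moderate growth there, and Deligne's regularity theorem then promotes this analytic flat section to an algebraic one, giving the desired algebraic morphism. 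Faithfulness is immediate from analytic continuation on the connected curve $\mathbb{G}_m$. The main obstacle is packaging these inputs cleanly; Sabbah's proof of II.Cor.3.7 bypasses the explicit annulus gluing via a cohomological argument using the flatness of $\mathbb{C}\{u\}[u^{-1}]$ over $\mathbb{C}[u,u^{-1}]$ and the vanishing of suitable $\mathrm{Ext}$-groups, but the underlying geometric content is the same.
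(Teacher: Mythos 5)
Your proposal is correct and follows essentially the same route as the paper's sketch: extend the germ to an actual connection on a punctured disk, pass through the isomorphism $\pi_1(D^*)\cong\pi_1(\mathbb{G}_m)\cong\mathbb{Z}$ to spread the solution data over $\mathbb{G}_m^{\mathrm{an}}$, attach a regular singular (Deligne canonical) model at $\infty$, and algebraize by GAGA. You package the $\pi_1$ step as an explicit annulus gluing via $P=\Phi_0\Phi_\infty^{-1}$ rather than directly extending the local system as the paper does, and you supply the full faithfulness half which the paper's sketch omits. One small imprecision worth fixing in that latter step: invoking ``Deligne's regularity theorem'' only controls growth at $\infty$; the algebraicity of the extended flat section of $F=E_1^{\vee}\otimes E_2$ also needs moderate growth at $0$, and since $F$ may have an irregular singularity there this does \emph{not} come from any regularity statement --- it holds because the extended section agrees near $0$ with the given $\mathbb{C}\{u\}[u^{-1}]$-linear horizontal morphism, hence is already meromorphic there. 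Once both ends are controlled, GAGA (not Deligne) gives algebraicity.
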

Denote by $\mathscr{B}_0$ a fixed inverse functor of $\mathscr{R}_0$. It's called the \emph{algebraization} or \emph{Birkhoff extension} functor.

\subsection{The main asymptotic existence theorem}
Let $(\mathscr{M},\nabla)$ be a rank $r$ meromorphic connection over $\mathbb{C}\{u\}[u^{-1}]$. 
Take the Birkhoff extension $(M,\nabla)=\mathscr{B}_0(\mathscr{M},\nabla)$. Let $D^2\subset \mathbb{C}P^1$ be a disk centered at $0$. The sheaf of local $\nabla$-horizontal sections of $M^{\mathrm{an}}$ defines a locally constant sheaf $\sol$ on $D^2\setminus\{0\}$. Take the Levelt-Turrittin formal decomposition of $(\mathscr{M},\nabla)$ as in (\ref{eqn:Levelt-Turrittin}),
with formal type of irregular singularity $\tau=\{N\in\mathbb{N};(g_i,r_i)_{i=1}^s:g_i\in t^{-1}\mathbb{C}[t^{-1}] \text{ distinct, and } r_i\in\mathbb{N}\}$, where $t=u^{\frac{1}{N}}$. 

The formal decomposition implies that, with respect to the corresponding formal basis of $\mathscr{M}((t))=\mathscr{M}\otimes_{\mathbb{C}\{t\}[t^{-1}]}\mathbb{C}((t))$, the columns of
\begin{eqnarray*}
\mathrm{diag}(e^{g_i}t^{C_i}:1\leq i\leq s)
\end{eqnarray*}
give $r$ linearly independent \emph{formal} $\nabla$-horizontal sections near $u=0$, denoted by $(\hat{S_1},\ldots,\hat{S}_s)$. Here $\hat{S}_i=(\hat{S_i}^1,\ldots, \hat{S_i}^{r_i})$ consists of $r_i$ sections corresponding to the $r_i$ columns of the block $e^{g_i}t^{C_i}$. 
Specifically, let $(\vec{e}_1(t),\ldots,\vec{e}_s(t))$ be the formal basis, where $\vec{e}_i(t)=(e_i^1(t),\ldots,e_i^{r_i}(t))$ with $e_i^j(t)\in\mathscr{M}((t))=\mathscr{H}((t))$. Here, $(\mathscr{H},\nabla)$ is any \emph{holomorphic extension} of $(\mathscr{M},\nabla)$.
Then $\hat{S_i}=\vec{e}_i(t)e^{g_i}t^{C_i}$. Equivalently, $e^{-g_i}\hat{S}_it^{-\mathrm{D}_i}=\vec{e}_i(t)t^{\mathrm{N}_i}$, where $C_i=\mathrm{D}_i+\mathrm{N}_i$ with $\mathrm{D}_i=\mathrm{diag}(\lambda_i^1,\ldots,\lambda_i^{r_i})$ the semisimple part and $\mathrm{N}_i$ the nilpotent part. Hence, we can write
\begin{eqnarray}
\hat{S_i}^j=e^{g_i(t)}t^{\lambda_i^j}\sum_{n=n_{i,j}}^{\infty}\sum_{h=0}^{r_i-1}a_{i,h,n}^jt^n(\mathrm{log}t)^h,\quad a_{i,h,n}^j\in\mathscr{H}.
\end{eqnarray}
The \emph{main asymptotic existence theorem} states that, one can lift the formal sections to \emph{honest} local $\nabla$-horizontal sections over any small sector, with the same asymptotics. More precisely, we have

\begin{theorem}[The main asymptotic existence theorem. See, e.g. {\cite[Ch.IV-V. Thm.12.1,Thm.19.1]{Was18}}]\label{thm:asymptotic}
For any $\varphi_0\in S^1$, there exists a neighborhood $V_{\epsilon}=(\varphi_0-\epsilon,\varphi_0+\epsilon)$ of $\varphi_0$ and some $0<\delta\ll1$ such that, 
for any choices of branches of $t=u^{\frac{1}{N}}$ and $\mathrm{log}(t)=\frac{1}{N}\mathrm{log}(u)$ over $\mathrm{Sec}(V_{\epsilon})$, there exists a \emph{basis} of $r$ sections $\{S_i^j:1\leq i\leq s, 1\leq j\leq r_i\}$ of $\sol$ over $Sec(V_{\epsilon})\cap D_{\delta}^2$, such that
$S_i^j\sim \hat{S_i}^j$. That is,
\begin{eqnarray}\label{eqn:asymptotic_estimate}
||S_i^j(u)-\hat{S}_{i,L}^j(u)||=|e^{g_i(t)}t^{\lambda_i^j}|\cdot o(|t|^L)=e^{\mathrm{Re}(g_i(t))}\cdot o(|t^{\lambda_i^j+L}|), u\rightarrow 0 \text{ in $\mathrm{Sec}(V_{\epsilon})\cap D_{\delta}^2$},
\end{eqnarray}
where $\hat{S}_{i,L}^j$ is the finite partial sum
\[
\hat{S}_{i,L}^j(u):=e^{g_i}t^{\lambda_i^j}\sum_{n\leq L}\sum_{h=0}^{r_i-1}a_{i,h,n}^jt^n(\mathrm{log}t)^h.
\]
And, $||\cdot||$ is any Hermitian norm on the holomorphic bundle $\mathscr{H}$ induced from a trivialization $\mathscr{H}\isomorphic\mathbb{C}\{t\}^r$.
\end{theorem}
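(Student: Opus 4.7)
The plan is to reduce the theorem, via the ramified pull-back $\pi_N\colon t\mapsto u=t^N$, to the case where the Turrittin–Levelt decomposition is already defined over $\mathbb{C}((t))$, and then to lift the formal gauge transformation realizing that decomposition to an honest holomorphic one on a narrow sector. Concretely, write the formal decomposition as a formal isomorphism
\[
\hat\Phi\colon \pi_N^*(\mathscr{M},\nabla)\otimes_{\mathbb{C}\{t\}[t^{-1}]}\mathbb{C}((t))\xrightarrow{\sim}
\Bigl(\bigoplus_{i=1}^{s}\mathscr{E}^{g_i}\otimes(\mathscr{R}_i,\nabla_i)\Bigr)\otimes_{\mathbb{C}\{t\}[t^{-1}]}\mathbb{C}((t)),
\]
with model connection $\nabla^{\mathrm{mod}}=d-\mathrm{diag}\bigl(dg_i\cdot I_{r_i}+C_i\,dt/t\bigr)$. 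The model admits the explicit flat frame $\Sigma^{\mathrm{mod}}=\mathrm{diag}(e^{g_i(t)}t^{C_i})$ on any simply-connected sector once a branch of $\log t$ is chosen. If we can produce a matrix $\Phi_V$ of holomorphic functions on $\mathrm{Sec}(V_\epsilon)\cap D_\delta^2$ satisfying $\Phi_V\cdot\nabla^{\mathrm{mod}}=\nabla\cdot\Phi_V$ and $\Phi_V\sim\hat\Phi$ in the Poincaré sense, then setting $(S_i^j)=\Phi_V\cdot\Sigma^{\mathrm{mod}}$ produces a basis of $\sol$ with the required asymptotic expansion, and the block-diagonal form of $\Sigma^{\mathrm{mod}}$ converts the Poincaré estimate on $\Phi_V$ into the weighted estimate \eqref{eqn:asymptotic_estimate}.

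The construction of $\Phi_V$ proceeds in two steps. First, apply the Borel–Ritt theorem to lift the formal matrix $\hat\Phi$ to some holomorphic matrix $\Phi_0$ on $\mathrm{Sec}(V_\epsilon)\cap D_\delta^2$ with $\Phi_0\sim\hat\Phi$; this is a purely analytic statement about realising any given formal power series as an asymptotic expansion on a sector of opening $<2\pi$, and it imposes no compatibility with the connections. The output $\Phi_0$ is only an approximate intertwiner: the defect $E:=\Phi_0^{-1}(\nabla\Phi_0)-\nabla^{\mathrm{mod}}$ is a $1$-form whose coefficients are holomorphic on the sector and asymptotic to $0$, i.e.\ flat to infinite order at $t=0$.

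Second, look for the genuine intertwiner in the form $\Phi_V=\Phi_0\,(I+R)$. The intertwining equation becomes a linear ODE for $R$ of the shape
\[
dR + [\nabla^{\mathrm{mod}},R] = -E\cdot(I+R),
\]
which in block form decouples into equations for $R_{ij}$ involving the ``Stokes factor'' $e^{g_j-g_i}t^{C_j-C_i}$. On a sufficiently narrow sector $V_\epsilon$ and a sufficiently small disk $D_\delta^2$, one can choose an endpoint of integration (either $t=0$ or along a ray where $\mathrm{Re}(g_j-g_i)/|t|^{\text{something}}\to-\infty$) so that the Volterra-type integral operator associated with this ODE is a contraction on the space of matrix-valued holomorphic functions on the sector which are flat to all orders at $0$. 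Fixed-point iteration then produces $R\sim 0$ uniquely, hence $\Phi_V\sim\hat\Phi$, and $\Phi_V$ is a genuine gauge transformation by construction.

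The main obstacle is the choice of integration paths in the second step: for each ordered pair $(i,j)$ the relevant path must be compatible with the sign of $\mathrm{Re}(g_i-g_j)$ in the sector $V_\epsilon$, and this is precisely where the Stokes directions enter. The narrowness of $\epsilon$ and the smallness of $\delta$ are dictated by this compatibility, which is why only a ``small sector'' statement is available and why globally there is non-trivial Stokes data. Modulo this analytic point—treated in Wasow's book \cite[Ch.\,IV--V]{Was18} via exactly this scheme of Borel–Ritt plus a contracting Volterra correction—the argument then packages cleanly into the existence of the asymptotic flat basis $\{S_i^j\}$ asserted in the theorem.
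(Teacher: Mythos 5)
The paper does not prove Theorem~\ref{thm:asymptotic}; it records it as a classical fact with a citation to Wasow's book and gives no argument of its own. Your sketch correctly reproduces the strategy Wasow uses in Chapters~IV--V: ramified pull-back to the $t$-plane, Borel--Ritt to realize the formal Turrittin--Levelt gauge $\hat\Phi$ as an approximate holomorphic intertwiner $\Phi_0$ on a narrow sector, and then a contracting Volterra-type correction producing the exact intertwiner $\Phi_V=\Phi_0(I+R)$, with the integration endpoint in each $(i,j)$-block chosen according to the sign of $\mathrm{Re}(g_i-g_j)$ on the sector --- which is precisely where the smallness of the opening $\epsilon$ is needed so that a consistent scheme exists simultaneously for all pairs. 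Multiplying the resulting Poincar\'e estimate on $\Phi_V-\hat\Phi$ against the block-diagonal flat frame $\Sigma^{\mathrm{mod}}=\mathrm{diag}(e^{g_i}t^{C_i})$ then yields exactly the weighted estimate~\eqref{eqn:asymptotic_estimate}. This is an accurate outline of the cited proof, at an appropriate level of detail for a statement the paper delegates entirely to the literature; the only remaining analytic content (boundedness and contraction of the Volterra operator on the space of flat sectorial functions) is genuinely nontrivial but is exactly what Wasow's Theorems~12.1 and~19.1 establish.
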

\noindent{}For simplicity, we \emph{write} $S_i:=(S_i^1,\ldots, S_i^{r_i})$, which consists of $r_i$ sections.

\begin{remark}\label{rem:branches_vs_orbit}
For each $i$ and any small open $V\subset S_u^1$, the set of all the branches of $g_i$ over $\mathrm{Sec}(V)$, denoted by $\{g_i^k\}$, is identical to the orbit $C_N\cdot g_i$ in Corollary \ref{cor:action_on_formal_type}, where in the later we fix a choice of the branches of $t=u^{\frac{1}{N}}$ over $\mathrm{Sec}(V)$.
\end{remark}

\subsection{Stokes data}
As above, let $(\mathscr{M},\nabla)$ be a meromorphic connection over $\mathbb{C}\{u\}[u^{-1}]$. 
Take the Birkhoff extension $(M,\nabla)=\mathscr{B}_0(\mathscr{M},\nabla)$, and solution sheaf $\sol$ over a disk $D^2\subset \mathbb{C}P^1$ centered at $0$. 
Let $\pi:\tilde{D^2}\rightarrow D^2$ be the real oriented blow-up at $0$ and $j:D^2\setminus\{0\}\hookrightarrow \tilde{D^2}$ be the natural inclusion, then define $\mathcal{S}:=j_*\sol|_{\pi^{-1}(0)=S^1}$. 

There's a natural \emph{local filtration} $\{\mathcal{S}_{\leq f}\}_{f\in{\Del}}$ by subsheaves on $\mathcal{S}$, termed as \emph{Stokes filtration}. Here, $\Del$ is Deligne's sheaf of ``growth rates''. $\{\mathcal{S}_{\leq f}\}_{f\in{\Del}}$ means that, for any open subset $U\subset S^1$ and $f\in\Del(U)$, $\mathcal{S}_{\leq f}\subset\mathcal{S}|_U$ defines a sub-sheaf, which induces a stalk-wise increasing filtration on $\mathcal{S}_{\varphi}, \varphi\in S^1$. More precisely,

\begin{definition}[Stokes filtration. See, e.g. {\cite[Sec.1.5]{Kas16}, \cite[Sec.2.1.3]{KKP08}}]~
\begin{enumerate}[wide,labelwidth=!, labelindent=0pt]
\item
Deligne's sheaf of ``growth rates'': $\Del$ is a locally constant sheaf of $\infty$-dimensional complex vector spaces on $S^1$ such that, for any open subset $U$ of $S^1$, let $\mathrm{Sec}(U)$ to be the sector in $D^2$ induced by $U$, i.e. $\mathrm{Sec}(U):=\{re^{i\varphi}\in D^2:r>0,\varphi\in U\}$, then 
\[
\Del(U):=\{f\in u^{-\frac{1}{N}}\mathbb{C}[u^{-\frac{1}{N}}]: \text{ for some $N\in\mathbb{N}$ and some branch $u^{\frac{1}{N}}$ defined on $\mathrm{Sec}(U)$}\}.
\]
In addition, $\Del$ is stalk-wise naturally (partially) \emph{ordered}: for any $f\neq g$ in $\Del(U)$ and $\varphi\in U$, we can write
\[f-g=c_au^a+(\text{higher order terms})\]
for some $a\in\mathbb{Q}_{<0}$ and $c_a\neq 0$. We \emph{say} $f<_{\varphi}g$ if and only if $\mathrm{Re}(c_ae^{i\varphi a})<0$.
\item
For any $f\in\Del(U)$, define $\mathcal{S}_{\leq f}\subset\mathcal{S}|_U$ stalk-wise as follows: for any $\varphi_0\in U\subset S^1$, set
\begin{eqnarray*}
(\mathcal{S}_{\leq f})_{\varphi_0}&:=&\{s\in\mathcal{S}_{\varphi_0}=\sol(\mathbb{R}_+e^{i\varphi_0}\cap D^2): ||(e^{-f}\cdot s)(re^{i\varphi})||=O(r^{-N}) (r\rightarrow 0),\\
&&\text{ uniformly in a neighborhood of $\varphi_0$, and for some $N\gg 0$}\}.
\end{eqnarray*}
where $||\cdot||$ is any Hermitian norm on the meromorphic bundle $\mathscr{M}$ induced from a trivialization $\mathscr{M}\isomorphic\mathbb{C}\{u\}[u^{-1}]^r$. In other words, $s\in(\mathcal{S}_{\leq f})_{\varphi_0}$ if and only $e^{-f}\cdot s$ has moderate (i.e. at most polynomial) growth in all directions near $\varphi_0$.
\end{enumerate}
By the ordering on $\Del_{\varphi}$, $\{\mathcal{S}_{\leq f}\}_{f\in\Del_{\varphi}}$ then defines an increasing filtration of $\mathcal{S}_{\varphi}$. The data of stalk-wise filtration $\{\mathcal{S}_{\leq f}\}_{f\in\Del}$ defined above is called the \emph{Stokes filtration} of $\mathcal{S}$, and we \emph{say} $\mathcal{S}$ is a \emph{$\Del$-filtered sheaf}.
\end{definition}

We can encode the data of the Stokes filtration by a simpler object, called \emph{Stokes Legendrian link}.
\begin{definition/proposition}[Stokes Legendrian link]\label{def:Stokes_Legendrian}
Take the Levelt-Turrittin formal decomposition of $(\mathscr{M},\nabla)$ as in (\ref{eqn:Levelt-Turrittin}),
with formal type of irregular singularity $\tau=\{g_i\in t^{-1}\mathbb{C}[t^{-1}], t=u^{\frac{1}{N}}\}$.
\begin{enumerate}[wide,labelwidth=!,labelindent=0pt]
\item
Fix $0<\epsilon\ll 1$, for each $i$, we define a real-valued (analytic) function $\mathrm{Re}^t_{\epsilon}(g_i):S_t^1\rightarrow\mathbb{R}$ via $\mathrm{Re}^t_{\epsilon}(g_i)(\theta):=\mathrm{Re}(g_i(\epsilon^{\frac{1}{N}} e^{i\theta}))$, which can be regarded as a multi-valued function $\mathrm{Re}^u_{\epsilon}(g_i)=\mathrm{Re}^t_{\epsilon}(g_i)\circ\pi_N^{-1}$ on $S_u^1$ via $\pi_N:S_t^1\rightarrow S_u^1: \pi_N(t)=t^N$, with local branches $\mathrm{Re}^u_{\epsilon}(g_i^k)$ (for $g_i^k$'s in Remark \ref{rem:branches_vs_orbit}). Notice that, as a multi-valued function on $S_u^1$, we always have $\mathrm{Re}^u_{\epsilon}(g_i)=\mathrm{Re}^u_{\epsilon}(\sigma^*g_i)$, for $\sigma$ in Corollary \ref{cor:action_on_formal_type}.

\noindent{}The union of the graphs of $\mathrm{Re}^u_{\epsilon}(g_i)$ in $S_u^1\times\mathbb{R}_z$ is called the \emph{Stokes diagram} of $(\mathscr{M},\nabla)$. 

\noindent{}The \emph{Stokes Legendrian link} of $(\mathscr{M},\nabla)$ is the Legendrian link $\Lambda$ in $J^1S_u^1\times\mathbb{R}_z\isomorphic T^{\infty,-}(S_u^1\times\mathbb{R}_z)$ whose front projection is the Stokes diagram. By definition, the Stokes Legendrian link $\Lambda$ (up to Legendrian isotopy) depends only on the formal type $\tau$.

\item
By the main asymptotic existence theorem, the Stokes filtration is determined by a local system of \emph{finite sets} $\Del_{(\mathscr{M},\nabla)}\subset\Del$. For any (small) open $U\subset S^1$, set
\[
\Del_{(\mathscr{M},\nabla)}(U):=\{g_i\in u^{-\frac{1}{N}}\mathbb{C}[u^{-\frac{1}{N}}]\}
\]
where we fix a branch of $u^{\frac{1}{N}}$ over $U$ as in Theorem \ref{thm:asymptotic}. Notice that by Remark \ref{rem:branches_vs_orbit}, $\Del_{(\mathscr{M},\nabla)}(U)$ contains all the branches $\{g_i^k\}$ of $g_i$ over $U$, for each $i$. 
By theorem \ref{thm:asymptotic}, each $g_i$ determines a \emph{locally constant sub-sheaf} $\mathcal{S}_{g_i}$ of $\mathcal{S}|_U$, spanned by the local sections $S_i^j$. Then for any $f\in\Del(U)$ and $\varphi\in U$, have
\[
(\mathcal{S}_{\leq f})_{\varphi}=\oplus_{g_i\leq_{\varphi}f}\mathcal{S}_{g_i}.
\]
\item
Via the projection $S^1\times\mathbb{R}_z\rightarrow S^1\isomorphic\pi^{-1}(0)$, we can regard $\mathcal{S}$ as a locally constant sheaf on $S^1\times\{z\gg0\}$. Fix $0<\epsilon\ll 1$, define a sheaf $\mathcal{S}^{\epsilon}$ on $S^1\times\mathbb{R}_z$, termed as \emph{Stokes sheaf} of $(\mathscr{M},\nabla)$,  via
\begin{eqnarray}
\mathcal{S}^{\epsilon}_{\varphi,z}:=\oplus_{\mathrm{Re}^u_{\epsilon}(g_i)(\varphi)<z}\mathcal{S}_{g_i}.
\end{eqnarray} 
Here, by fixing a branch of $u^{\frac{1}{N}}$ near $\varphi$ as above, we're fixing a branch of $\mathrm{Re}^u_{\epsilon}(g_i):S_u^1\rightarrow\mathbb{R}_z$.
Equivalently, the stalk of $\mathcal{S}^{\epsilon}$ at $(\varphi,z)$ consists of local sections of the solution sheaf $\sol$ over a small sector near $\varphi$, which grow at most polynomially faster than $e^{g_i^k}$, for \emph{some} $g_i^k$ with $\mathrm{Re}(g_i^k)(\epsilon e^{i\varphi})<z$.  

\noindent{}Then $\mathcal{S}^{\epsilon}|_{\{z\gg 0\}}=\mathcal{S}$, and the local $\Del_{(\mathscr{M},\nabla)}$-filtration $\{\mathcal{S}_{\leq g_i}\}_{g_i\in\Del_{(\mathscr{M},\nabla)}}$ is equivalent to the property that, the micro-support at infinity of $\mathcal{S}^{\epsilon}$ is equal to the Stokes Legendrian link $\Lambda$ defined above. 

\item
Moreover, passing to the associated local $\Del_{(\mathscr{M},\nabla)}$-graded local system of $(\mathcal{S},\{\mathcal{S}_{\leq g_i}\})$ over $S_u^1$
\[
\mathrm{Gr_f\mathcal{S}}_{\varphi}:=(\mathcal{S}_{\leq f})_{\varphi}/\oplus_{g_i <_{\varphi} f}(\mathcal{S}_{\leq g_i})_{\varphi},
\]
amounts to taking the \emph{microlocal monodromy} of the Stokes sheaf $\mathcal{S}^{\epsilon}$ on $S_u^1\times\mathbb{R}_z$, i.e. a local system over the Stokes Legendrian link $\Lambda$.
\end{enumerate}
\end{definition/proposition}

In fact we can be more concrete. Let $(\mathscr{M}^{\land}=\mathscr{M}\otimes_{\mathbb{C}\{u\}[u^{-1}]}\mathbb{C}((u)),\nabla)$ be the formal completion. Take the Levelt-Turrittin formal decomposition of $(\mathscr{M}^{\land},\nabla)$ as in (\ref{eqn:Levelt-Turrittin}), remembering the group action of $C_N$.
The connected components of the Stokes Legendrian link $\Lambda$ of $(\mathscr{M},\nabla)$ are in one-to-one correspondence with the orbits $\{C_N\cdot g_i\}$.  
The connected component corresponding to $C_N\cdot g_i$, denoted by $\Lambda_{[i]}$, has front projection the graph of the (multi-valued) function $\mathrm{Re}^u_{\epsilon}(g_i)$. In particular, $\Lambda_{[i]}$ is a $n_i$-to-$1$ cover of $S_u^1$ via the projection $\Lambda_{[i]}\hookrightarrow J^1S^1\rightarrow S^1$, with $n_i:=|C_N\cdot g_i|$. And, $N=\mathrm{lcm}(n_1,\ldots,n_s)$.

Alternatively, the \emph{microlocal monodromy} of $\mathcal{S}^{\epsilon}$ over $\Lambda_{[i]}$ is a \emph{rank $r_i$} local system, obtained as follows: the graph of $\mathrm{Re}^t_{\epsilon}(g_i)$ defines a Legendrian knot $\Lambda^t_i\hookrightarrow J^1S_t^1$. It's naturally identified with $S_t^1$, and equipped with an action of $\mathrm{Stab}_{C_N}(g_i)=<\sigma^{n_i}>\isomorphic\mathbb{Z}/(N/n_i)$. The quotient by the group is the natural map $\Lambda^t_i\rightarrow \Lambda_{[i]}$, in particular a $(N/n_i)$-to-$1$ cover.

Now, by taking local horizontal sections, the regular factor $(\mathscr{R}_i,\nabla_i)$ in the Levelt-Turrittin formal decomposition gives a well-defined local system of rank $r_i$ on $\Lambda_i^t$. The local system has monodromy $\exp(2\pi iC_i)$. The uniqueness of the decomposition implies that $\mathrm{Stab}_{C_N}(g_i)$ preserves $(\mathscr{R}_i,\nabla_i)$, hence the local system is $\mathrm{Stab}_{C_N}(g_i)$-equivariant, equivalently, descends to a local system of rank $r_i$ on $\Lambda_{[i]}$ as desired. 
The monodromy $T_i$ of the latter satisfies $T_i^{N/n_i}=\exp(2\pi iC_i)$. We see that $T_i$ is not uniquely determined by $[C_i]$, but by $[C_i]$ together with the group action of $\mathrm{Stab}_{C_N}(g_i)$ on the local system over $\Lambda_i^t$ determined by $[C_i]$.

As a consequence, fix a formal type $\tau=\{g_i\in u^{-\frac{1}{N}}\mathbb{C}[u^{-\frac{1}{N}}]\}$, with the associated Stokes Legendrian link $\Lambda$, we obtain a commutative diagram of functors
\[
\begin{tikzcd}
&
\mathcal{L}oc(S_{\infty}^1)\\
\left(
\text{
\minipage[c]{2.5in}
meromorphic connections $(\mathscr{M},\nabla)$ over $\mathbb{C}\{u\}[u^{-1}]$ with prescribed formal type $\tau$
\endminipage
}
\right)
\arrow{r}{\St}\arrow{d}{F}\arrow[ur,"{\sol}"]
&
\Sh_{\Lambda}(S_u^1\times\mathbb{R}_z,\{z=-\infty\})\arrow{d}{\mumon}\arrow[u,"{\mon}"]\\
\left(
\text{
\minipage[c]{2.5in}
formal connections $(\mathscr{M}^{\land}, \nabla)$ over $\mathbb{C}((u))$ with prescribed formal type $\tau$
\endminipage
}
\right)
\arrow{r}{\overline{\St}}
&\mathcal{L}oc(\Lambda)
\end{tikzcd}
\]
where: $S_{\infty}^1=S_u^1\times\{z_0\}$ for some $z\gg 1$; $\sol$ denotes the solution sheaf (near $\infty$) induced by the Birkhoff extension; 
$\St$ sends $(\mathscr{M},\nabla)$ to its Stokes sheaf $\mathcal{S}^{\epsilon}$; $F$ is the formal completion; $\Sh_{\Lambda}(S_u^1\times\mathbb{R}_z,\{z=-\infty\})$ is the full sub-category of $\Sh_{\Lambda}(S_u^1\times\mathbb{R}_z)$ whose objects have zero stalks at $z\ll 0$, and $\mumon$ is the microlocal monodromy. 
The definition of $\overline{\St}$ needs some attention: 
\begin{itemize}[wide,labelwidth=!,labelindent=0pt]
\item
Given a formal connection $(\mathscr{M}^{\land},\nabla)$, we lift it to a meromorphic connection $(\mathscr{M},\nabla)$ \cite[Lem.4.3]{Mal82}, and apply to $(\mathscr{M},\nabla)$ the ``associated graded local system'' construction in Definition \ref{def:Stokes_Legendrian}.(4), then $\overline{\St}(\mathscr{M}^{\land},\nabla):=\mathrm{Gr}\circ\St(\mathscr{M},\nabla)$, which is just the commutativity of the diagram. The definition is independent of the meromorphic lift \cite[\S.4.B)]{Mal82}.
\item
Alternatively, we can define $\overline{\St}$ directly via the Levelt-Turrittin formal decomposition of $(\mathscr{M}^{\land},\nabla)$ equipped with the group action of $C_N$, described as in the previous paragraph.
\end{itemize}

\noindent{}\textbf{Note}: here by a formal connection $(\mathscr{M}^{\land},\nabla)$ with prescribed formal type $\tau$, we mean its formal type of irregular singularity $\tau(\mathscr{M}^{\land},\nabla)$ is a subset of $\tau$.

The local form of irregular Riemann-Hilbert correspondence now states:
\begin{theorem}[Local form of irregular Riemann-Hilbert correspondence. {\cite[Thm.4.2, Thm.4.4]{Mal82}}]\label{thm:local_irregular_RH}
The functors $\St$ and $\overline{\St}$ are equivalences of categories.
\end{theorem}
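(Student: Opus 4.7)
My plan is to reduce the two equivalences to one classical ingredient (the Malgrange--Sibuya theorem) plus bookkeeping via the Turrittin--Levelt decomposition, organized around the commutative square relating $\mathrm{St}$ and $\overline{\mathrm{St}}$. The vertical functors $F$ (formal completion) and $\mumon$ (microlocal monodromy) are both ``forget the wild/Stokes data'' functors; the strategy is to show that (i) $\overline{\mathrm{St}}$ is an equivalence of categories, and (ii) for each formal object $(\mathscr{M}^\wedge,\nabla)$ with formal type $\subset \tau$, the fibers of $F$ and of $\mumon$ over $(\mathscr{M}^\wedge,\nabla)$ and $\overline{\mathrm{St}}(\mathscr{M}^\wedge,\nabla)$ respectively are canonically isomorphic as torsors. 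Formally, this will give a compatible family of bijections on isomorphism classes and on $\hom$-sets, hence an equivalence.

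For step (i), the Turrittin--Levelt decomposition gives, for any formal connection with formal type $\subset\tau$, a canonical decomposition $\pi_N^*(\mathscr{M}^\wedge,\nabla)\simeq \bigoplus_i \mathscr{E}^{g_i}\otimes(\mathscr{R}_i,\nabla_i)$ descending to data over $\mathbb{C}((u))$ via the $C_N$-action of Corollary~\ref{cor:action_on_formal_type}. The regular factors $(\mathscr{R}_i,\nabla_i)$ are classified by their monodromies $\exp(2\pi i C_i)$, i.e. by local systems on $S^1_t$, and the compatibility with $\mathrm{Stab}_{C_N}(g_i)=\langle\sigma^{n_i}\rangle$ is exactly the datum of descent to a local system on the connected component $\Lambda_{[i]}\simeq \Lambda_i^t/\mathrm{Stab}_{C_N}(g_i)$. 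Inverting this procedure, a local system on $\Lambda=\sqcup_i\Lambda_{[i]}$ reconstructs each $(\mathscr{R}_i,\nabla_i)$ with its equivariant structure, and tensoring back with $\mathscr{E}^{g_i}$ and applying the $C_N$-descent recovers $(\mathscr{M}^\wedge,\nabla)$. Fully faithfulness is a direct check: a morphism of formal objects decomposes by uniqueness of the Turrittin--Levelt decomposition, and a morphism of equivariant regular formal factors is the same as a morphism of the corresponding local systems by the regular formal Riemann--Hilbert theorem.

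For step (ii), the core input is the Malgrange--Sibuya theorem: the pointed set of isomorphism classes of meromorphic connections lifting a fixed $(\mathscr{M}^\wedge,\nabla)$ is naturally identified with the non-abelian cohomology $H^1(S^1,\mathrm{Aut}^{<0}(\mathscr{M}^\wedge,\nabla))$, where $\mathrm{Aut}^{<0}$ is the sheaf on $S^1$ of formal automorphisms of $(\mathscr{M}^\wedge,\nabla)$ asymptotic to the identity on appropriate sectors (the ``Stokes sheaf of groups''). On the sheaf side, given a local system $\mathcal{L}\in\mathcal{L}oc(\Lambda)$, the isomorphism classes of objects in $\Sh_\Lambda(S^1_u\times\mathbb{R}_z,\{z=-\infty\})$ with microlocal monodromy $\mathcal{L}$ are classified by a non-abelian $H^1$ of the sheaf of automorphisms of the ``model sheaf'' $\mathrm{St}(\text{any lift})$ that are identity on $\mumon$. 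A direct computation (using the stalkwise description of the Stokes filtration in Definition/Proposition~\ref{def:Stokes_Legendrian} (3) and the asymptotic estimates of Theorem~\ref{thm:asymptotic}) identifies this sheaf with $\mathrm{Aut}^{<0}(\mathscr{M}^\wedge,\nabla)$: an automorphism of the sheaf respecting the filtration and inducing the identity on the associated graded is precisely a section of $\mathrm{Aut}^{<0}$. This gives the canonical identification of the two torsors.

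The main obstacle, as expected, will be this last identification in step (ii): one must check that the map $\mathrm{St}$ intertwines the Stokes cocycles on the connection side (coming from comparing different sectorial liftings of formal horizontal bases, as in Theorem~\ref{thm:asymptotic}) with the Čech cocycles on the sheaf side (coming from trivializing the filtration $\{\mathcal{S}_{\leq f}\}$ over small arcs and comparing on overlaps). Concretely, the asymptotic condition ``$S_i^j\sim \hat S_i^j$'' translates under $\mathrm{St}$ into the condition that two local trivializations of $\mathcal{S}^\epsilon$ differ by a transition matrix whose $(g_j,g_i)$-block grows no faster than $e^{g_i-g_j}$, which is exactly the defining condition of $\mathrm{Aut}^{<0}$ at the corresponding Stokes ray. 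Once this dictionary is set up, the surjectivity and injectivity on $\hom$-sets follow from the sheaf property of $\mathcal{S}^\epsilon$ and the Hukuhara--Turrittin sectorial lifting theorem, giving full faithfulness of $\mathrm{St}$; essential surjectivity then follows from step (i) applied fiberwise.
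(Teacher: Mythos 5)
The paper does not actually prove this theorem: it is stated with a citation to Malgrange~\cite[Thm.4.2, Thm.4.4]{Mal82}, and the surrounding appendix only constructs the dictionary between Stokes-filtered local systems and objects of $\Sh_\Lambda(S^1_u\times\mathbb{R}_z,\{z=-\infty\})$. Your proposal is essentially a reconstruction of the argument in the cited reference: Turrittin--Levelt for the formal classification ($\overline{\mathrm{St}}$ an equivalence), Malgrange--Sibuya to classify meromorphic lifts of a fixed formal connection by the non-abelian $H^1(S^1,\mathrm{Aut}^{<0})$, and a matching $H^1$ computation on the sheaf side to compare the fibers of $F$ and $\mumon$. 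That is the standard route, and it aligns with what the paper delegates to Malgrange.

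Two points deserve tightening. First, the set of meromorphic lifts of a fixed formal connection is, via Malgrange--Sibuya, a \emph{pointed set} in canonical bijection with $H^1(S^1,\mathrm{Aut}^{<0})$; it is not a torsor over any group in general, since $\mathrm{Aut}^{<0}$ is non-abelian, so ``canonically isomorphic as torsors'' should be replaced by ``canonically identified as pointed sets compatibly with the distinguished objects.'' Second, a bijection of fibers of $F$ and $\mumon$ over corresponding objects does not by itself yield an equivalence of categories; one needs full faithfulness of $\mathrm{St}$ as a separate input. You correctly identify that this comes from sectorial lifting of morphisms (Hukuhara--Turrittin) and the Stokes-filtration-preserving characterization of sheaf morphisms, but this ingredient is logically parallel to, not a consequence of, the fiberwise $H^1$ identification, and the proposal's framing (``this will give \dots bijections on $\hom$-sets, hence an equivalence'') slightly obscures that. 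Beyond these phrasing issues, the argument you sketch is the one the paper cites, and the identified crux---matching the sheaf of filtration-preserving, graded-identity automorphisms on the sheaf side with $\mathrm{Aut}^{<0}$ on the connection side via the asymptotic estimates of Theorem~\ref{thm:asymptotic}---is indeed the heart of the proof.
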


\subsection{Irregular Riemann-Hilbert correspondence}

Finally, we come back to our global setting and state the irregular Riemann-Hilbert correspondence for curves. Let $C$ be a Riemann surface with punctures $p_1,\ldots,p_k$. Fix a formal type of irregular singularity $\tau_i$ at each $p_i$. Let $\mathcal{C}_{\mathrm{dR}}(C,\{p_i\},\{\tau_i\})$ be the category of meromorphic connections on $C$ with prescribed formal irregular singularity $\tau_i$ at $p_i$. Notice that the case $\tau_i=\{0\}$ corresponds to the regular singularity at $p_i$. 

Let $\Lambda_i$ be the Stokes Legendrian link associated to $\tau_i$. Take the real oriented blow-up $\pi_i:\mathrm{Bl}_{p_i}(C)\rightarrow C$ at each $p_i$, and glue $S^1\times\mathbb{R}_z$ into $\mathrm{Bl}_{p_i}(C)$ along the identification $S^1\times\{\infty\}\isomorphic\pi^{-1}(p_i)$. The resulting surface can again be identified with the punctured surface $C\setminus\{p_i\}$. Now, $\Lambda_i\hookrightarrow J^1S^1=T^*S^1\times\mathbb{R}_z$ becomes a Legendrian link in $T^{\infty}C$ whose front projection lives near $p_i$ via the identification $S^1\times\mathbb{R}_z\isomorphic D_{\delta}^*(p_i)$, where $D_{\delta}^*(p_i)$ is a small punctured disk in $C$ centered at $p_i$. Then recall that $\mathcal{C}_{\mathrm{B}}(C,\{p_i\},\{\Lambda_i\})$ is the full sub-category of $\Sh_{\cup\Lambda_i}(C)$ of constructible sheaves whose stalk at each $p_i$ vanishes.

Let $(M,\nabla)$ be any meromorphic connection in $\mathcal{C}_{\mathrm{dR}}(C,\{p_i\},\{\tau_i\})$, take the solution sheaf $\sol$ of local $\nabla$-horizontal sections and apply the Stokes sheaf construction to $\sol$ at each $p_i$, the previous subsection shows that we then obtain a \emph{global Stokes sheaf} $\mathcal{S}^{\epsilon}\in\mathcal{C}_{\mathrm{B}}(C,\{p_i\},\{\Lambda_i\})$. As in the local case, this gives a well-defined \emph{global Stokes sheaf} functor $\St:\mathcal{C}_{\mathrm{dR}}(C,\{p_i\},\{\tau_i\})\rightarrow\mathcal{C}_{\mathrm{B}}(C,\{p_i\},\{\Lambda_i\})$, which fits into the following commutative diagram of functors:
\begin{eqnarray}
\begin{tikzcd}
\mathcal{C}_{\mathrm{dR}}(C,\{p_i\},\{\tau_i\})\arrow{r}{\St}\arrow{d}{\prod_i\mathscr{R}_i} & \mathcal{C}_{\mathrm{B}}(C,\{p_i\},\{\Lambda_i\})\arrow{d}{\prod_i\fr_i}\\
\prod_i\left(
\text{
\minipage[c]{2.5in}
meromorphic connections $(\mathscr{M}_i, \nabla)$ over $\mathbb{C}\{u\}[u^{-1}]$ with prescribed formal type $\tau_i$
\endminipage
}
\right)
\arrow{r}{\prod_i\St_i}
&\prod_i\Sh_{\Lambda_i}(D^*_{\delta}(p_i))_0
\end{tikzcd}
\end{eqnarray}
where: 
$\Sh_{\Lambda_i}(D^*_{\delta}(p_i))_0=\Sh_{\Lambda_i}(S^1\times\mathbb{R}_z)_0$ under the identification $D^*_{\delta}(p_i)\isomorphic S^1\times\mathbb{R}_z$ mentioned above.

Now the global form of irregular Riemann-Hilbert correspondence over curves states:
\begin{theorem}[Irregular Riemann-Hilbert correspondence over curves]
The global Stokes sheaf functor $\St$ is an equivalence of categories.
\end{theorem}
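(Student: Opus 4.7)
The plan is to promote Theorem~\ref{thm:local_irregular_RH} (local irregular Riemann-Hilbert) together with the classical regular Riemann-Hilbert correspondence on the open curve $C^\circ := C \setminus \{p_1,\ldots,p_k\}$ to a global statement by a Mayer-Vietoris-style descent along the two-patch open cover $\{U,V\}$ of $C$, where $U = C^\circ$ and $V = \sqcup_i D_i$ is the disjoint union of small disks $D_i$ centered at the punctures; the overlap is $U \cap V = \sqcup_i D_i^*$, and the Legendrians $\Lambda_i$ are supported in $V$ under the identification $D_i^* \cong S^1 \times \mathbb{R}_z$.

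For essential surjectivity, given $\mathcal{F} \in \mathcal{C}_{\mathrm{B}}(C,\{p_i\},\{\Lambda_i\})$, I would construct a preimage in two pieces and glue. Since $\cup_i\Lambda_i$ lies over $V$, the restriction $\mathcal{F}|_U$ is a constructible sheaf with empty microsupport at infinity, hence a local system, which by classical Riemann-Hilbert is realized by a unique holomorphic bundle with integrable connection $(M^\circ,\nabla^\circ)$ on $U$. On each $D_i$, the restriction $\mathcal{F}|_{D_i}$ lies in $\Sh_{\Lambda_i}(S^1\times\mathbb{R}_z,\{z=-\infty\})$, so Theorem~\ref{thm:local_irregular_RH} furnishes a (unique up to canonical isomorphism) meromorphic connection $(\mathscr{M}_i,\nabla_i)$ over $\mathbb{C}\{u_i\}[u_i^{-1}]$ with formal type $\tau_i$ and $\mathrm{St}_i(\mathscr{M}_i,\nabla_i)\cong\mathcal{F}|_{D_i}$; its Birkhoff extension $\mathscr{B}_0(\mathscr{M}_i,\nabla_i)$ is a holomorphic bundle with connection on a punctured neighborhood of $p_i$ extending meromorphically across $p_i$. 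On each overlap $D_i^*$, both $(M^\circ,\nabla^\circ)|_{D_i^*}$ and $\mathscr{B}_0(\mathscr{M}_i,\nabla_i)|_{D_i^*}$ are regular holomorphic connections whose solution local systems are canonically isomorphic (each computes $\mathcal{F}|_{D_i^*}$ as a local system away from the front projection of $\Lambda_i$); regular Riemann-Hilbert on $D_i^*$ then supplies a unique gluing isomorphism, producing a global meromorphic connection $(M,\nabla)$ with prescribed formal types and $\mathrm{St}(M,\nabla)\cong\mathcal{F}$.

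For fully faithfulness, the same cover $\{U,V\}$ reduces the claim to combining the two local Hom-bijections with compatibility on $U\cap V$. A morphism $\varphi:(M_1,\nabla_1)\to(M_2,\nabla_2)$ in $\mathcal{C}_{\mathrm{dR}}(C,\{p_i\},\{\tau_i\})$ is the same as a flat morphism on $U$ plus germs of morphisms of meromorphic connections near each $p_i$ which match on $D_i^*$; the corresponding morphism $\mathrm{St}(\varphi)$ of Stokes sheaves decomposes identically via $\{U,V\}$. Regular Riemann-Hilbert gives the Hom-bijection on $U$, Theorem~\ref{thm:local_irregular_RH} gives it on each $D_i$, and the annulus compatibility matches on both sides because the local Stokes sheaf $\mathcal{S}^\epsilon$ restricted away from the front projection of $\Lambda_i$ agrees with the solution sheaf of the Birkhoff extension of $(\mathscr{M}_i,\nabla_i)$, a point inherent in the definition of $\mathcal{S}^\epsilon$ and in the asymptotic existence Theorem~\ref{thm:asymptotic}.

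The main obstacle will be precisely this ``annulus compatibility'': one must verify that the local IRH functor $\mathrm{St}_i$, which is naturally formulated boundary-circle-wise via Stokes filtrations on $\mathcal{S}|_{S^1}$, sheafifies coherently over the entire punctured disk so as to reproduce, outside $\pi_{xz}(\Lambda_i)$, the ordinary solution sheaf of any analytic realization of its formal type. This requires invoking Malgrange's analytic lift of formal connections \cite[\S4]{Mal82} and checking that the two a~priori distinct local systems on $D_i^*$ -- the one obtained by restricting $\mathcal{F}$ and the one obtained from horizontal sections of $\mathscr{B}_0(\mathscr{M}_i,\nabla_i)$ -- are canonically identified; one then has to show the resulting cocycle of transition data assembles into a bona fide meromorphic (not merely formal) connection on $C$, and that naturality in $\mathcal{F}$ is preserved. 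Modulo these verifications, the theorem reduces by descent along $\{U,V\}$ to Theorem~\ref{thm:local_irregular_RH} together with classical Riemann-Hilbert.
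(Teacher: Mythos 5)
Your proof follows essentially the same strategy as the paper's: reduce the global statement to regular Riemann--Hilbert on the open curve $C^\circ$ together with the local irregular Riemann--Hilbert equivalences near each puncture, glued along annuli/boundary circles. The paper phrases this as a fiber-product decomposition of the source and target categories over $\prod_i \mathcal{L}oc(S^1(p_i))$, whereas you phrase it as Mayer--Vietoris descent over the cover $\{C^\circ, \sqcup_i D_i\}$; the ``annulus compatibility'' you single out as the main verification is precisely what the paper subsumes under the assertion that $\mathrm{St}$ is compatible with the two fiber products.
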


\subsection{Example}

Take $C=\mathbb{C}P^1$ with one puncture $p=\infty$ and let $z$ be the standard complex coordinate on $\mathbb{C}P^1$. Fix $M=\mathcal{O}_{\mathbb{C}P^1}^{\oplus 2}$ to be the trivial rank 2 holomorphic bundle on $\mathbb{C}P^1$. Let $\nabla:M\rightarrow M\otimes\Omega_{\mathbb{C}P^1}^1$ be the meromorphic connection whose matrix form is
\begin{eqnarray*}
\nabla=d-Adz,\quad A=\left(\begin{array}{cc}
0 & 1\\
z^n & 0
\end{array}\right)
\end{eqnarray*}
This corresponds to the linear ODE $f''-z^nf=0$ (\emph{generalized Airy equation}), via $
\left(\begin{array}{c}
f\\
f'
\end{array}\right)'=A\left(\begin{array}{c}
f\\
f'
\end{array}\right)$. Here, $'=\frac{d}{dz}$. 
We want to determine the associated Stokes Legendrian link at the (irregular) singularity $\infty$. We firstly take the coordinate $x=z^{-1}$ centered at $\infty$ so that $x(\infty)=0$, and define the differential operator $\delta=x\frac{d}{dx}=-z\frac{d}{dz}$, then the linear ODE becomes $Lf=0$ with $L=\delta^2+\delta-x^{-(n+2)}$. We can use Newton polygons \cite[Chap.3.3]{VdPS12} to take a formal decomposition of $L$ over some finite field extension of $\mathbb{C}((x))$.

\begin{figure}
\begin{center}
\begin{tikzpicture}[
scale=2,
line/.style={thin},
important line/.style={very thick}, 
dashed line/.style={dashed, thin},
dot/.style={circle,fill=black,minimum size=0pt,inner sep=0pt,
            outer sep=0pt},
important dot/.style= {circle,fill=black,minimum size=4pt,inner sep=0pt,
            outer sep=-1pt},           
]
                    
\draw[line] 
(-.5,0) coordinate (es) -- (1.5,0) coordinate (ee);
\node[important dot,label=above right:$2$] at (1,0)(){}; 
\node[important dot,label=above right:$1$] at (.5,0)(){};
              
\draw[line]
(0,.5) coordinate (es)--(0,-2) coordinate (ee);
\node[dot,label=left:{-1}] at (0,-.5)(){};
\node[dot,label=left:{-2}] at (0,-1)(){};
\node[important dot,label=below left:{-3}] at (0,-1.5)(){};

\draw[important line]
(1,0) coordinate (es) -- (1,.5) coordinate (ee);

\draw[important line]
(1,0) coordinate (es) -- (0,-1.5) coordinate (es);

\draw[important line]
(0,-1.5) coordinate (es) -- (-.5,-1.5) coordinate (ee);

\draw[dashed line]
(-0.1,-1.5) coordinate (es) -- (1,.15) coordinate (ee);

\draw[dashed line]
(-0.2,-1.5) coordinate (es) -- (1,.3) coordinate (ee);

\draw[dashed line]
(-0.3,-1.5) coordinate (es) -- (1,.45) coordinate (ee);              
          
\end{tikzpicture}
\end{center}
\vspace{-0.2in}
\caption{$n=1$: Newton polygon for $L=\delta^2+\delta-x^{-3}$.}
\label{fig:Newton polygon}
\end{figure}

The Newton polygon $\mathrm{N}(L)$ of $L$ is the convex hull of the second quadrants with vertices $(i,j)$ corresponding to the monomials $x^j\delta^i$ in $L$. In the case $n=1$, it's given by the shaded region in Figure \ref{fig:Newton polygon}. $\mathrm{N}(L)$ has 2 extremal points $\{(n_1,m_1)=(0,-(n+2)),(n_2,m_2)=(2,0)\}$ with $0\leq n_1<n_2$. $L$ has a unique \emph{positive slope} $k_1=\frac{m_2-m_1}{n_2-n_1}=\frac{n+2}{2}$. Write $\frac{b}{a}:=\frac{n+2}{2}$ with $a>0, \mathrm{gcd}(a,b)=1$. Say, $n$ is odd, then $(a,b)=(2,n+2)$. We make the field extension $\mathbb{C}((t))\supset\mathbb{C}((x))$ with $t^a=t^2=x$, and define $\Delta:=t^b\delta=t^{(n+2)}\delta$. We can rewrite $L=t^{-2b}(\Delta^2+(1-b/2)t^b\Delta-1)$, so $\tilde{L}:=t^{2b}L=\Delta^2+(1-b/2)t^b\Delta-1=(\Delta-1)(\Delta+1) (\mathrm{mod}~ t)$ (in fact true $\mathrm{mod~} t^b$). A version of Hensel's lemma (\cite[Prop.3.50]{VdPS12}) or a direct calculation shows that 
\[
\tilde{L}=(\Delta-1+t^b(\frac{2-b}{4}+\ldots))(\Delta+1+t^b(\frac{2-b}{4}+\ldots))
\]
where `$\ldots$' means some formal power series in $t\mathbb{C}[[t]]$. Hence, another simple calculation gives 
\[
L=(\delta-t^{-(n+2)}+\frac{2+b}{4}+\ldots)(\delta+t^{-(n+2)}+\frac{2-b}{4}+\ldots).
\]
By solving $(\delta+t^{-(n+2)}+\frac{2-(n+2)}{4}+\ldots)f_-=0$, we then obtain a formal solution to $Lf_-=0$, of the form 
\[
f_-=\exp(\frac{2}{n+2}x^{-\frac{n+2}{2}})x^{\frac{n}{4}}\sum_{m\geq 0}a_m^-x^{\frac{m}{2}}.
\]
Similarly, using $\Delta^2-1=(\Delta+1)(\Delta-1)$, one obtains another factorization of $L$, through which one immediately find another formal solution to $Lf=0$, of the form
\[
f_+=\exp(-\frac{2}{n+2}x^{-\frac{n+2}{2}})x^{\frac{n}{4}}\sum_{m\geq 0}a_m^+x^{\frac{m}{2}}.
\]
The exponents $\{\pm\frac{2}{n+2}x^{-\frac{n+2}{2}}\}$ in $f_-,f_+$ are the $g_i's$ in the Levelt-Turrittin formal decomposition of $(M,\nabla)$ near $\infty$, i.e. the formal type of the irregular singularity at $\infty$. Now, it's direct to see that the Stokes Legendrian link of $(M,\nabla)$ near $\infty$ has front diagram the $(2,n+2)$-braid in $\mathbb{C}P^1\setminus\{\infty\}$ encircling $\infty$.

\end{document}